\definecolor{Myblue}{rgb}{0,0,0.6}  
\newtheorem{Theorem}{Theorem}[subsection]
\newtheorem{Lemma}[Theorem]{Lemma}
\newtheorem{Proposition}[Theorem]{Proposition}
\theoremstyle{definition}
\newtheorem{Definition}[Theorem]{Definition}
\newtheorem{Construction}[Theorem]{Construction}
\theoremstyle{remark}
\newtheorem{Remark}[Theorem]{Remark}
\def\id{\mathrm{id}}
\def\Id{\mathrm{Id}}
\def\UModMonCat{\mathrm{UModMonCat}}
\def\ModMonCat{\mathrm{ModMonCat}}
\def\Pairs{\mathrm{Pairs}}
\def\BrTens{\mathrm{BrTens}}
\def\SC{\mathcal{C}}
\def\SD{\mathcal{D}}
\def\SM{\mathcal{M}}
\def\ST{\mathcal{T}}
\def\SV{\mathcal{V}}
\newcommand{\alp}[1]{\alpha^{(#1)}}
\newcommand{\alpd}[1]{\alpha^{(#1)'}}
\newcommand{\alpdd}[1]{\alpha^{(#1)''}}
\newcommand{\com}[1]{\textcolor{black}{\tiny\boxed{#1}}}
\newcommand{\nat}{\text{nat.\,}}
\newcommand{\dfn}{\text{def.\,}}
\newcommand{\funct}{\text{funct.\,}}
\def\1{{\mathbf1}}
\title{Module Monoidal Categories as\\
Categorification of Associative Algebras}
\author{Sebastian Heinrich\\[0.5cm]
\normalsize{\texttt{\href{mailto:sebastian.heinrich@uni-hamburg.de}{sebastian.heinrich@uni-hamburg.de}}}\\[0.1cm]
{\normalsize\slshape Fachbereich Mathematik, Universit\"at Hamburg,}\\
{\normalsize\slshape Bundesstra\ss e 55, 20146 Hamburg, Germany}\\[-0.1cm]
}
\date{}
\begin{document}
\maketitle

\begin{abstract}
In \cite{HPT15}, the notion of a module tensor category was introduced as a braided monoidal central functor $F\colon \SV\longrightarrow \ST$ from a braided monoidal category $\SV$ to a monoidal category $\ST$, which is a monoidal functor $F\colon \SV\longrightarrow\ST$ together with a braided monoidal lift $F^Z\colon \SV\longrightarrow Z(\ST)$ to the Drinfeld center of $\ST$. This is a categorification of a unital associative algebra $A$ over a commutative ring $R$ via a ring homomorphism $f\colon R\longrightarrow Z(A)$ into the center of $A$.
In this paper, we want to categorify the characterization of an associative algebra as a (not necessarily unital) ring $A$ together with an $R$\==module structure over a commutative ring $R$, such that multiplication in $A$ and action of
$R$ on $A$ are compatible. In doing so, we introduce the more general notion of \emph{non\--unital module monoidal categories} and obtain 2\==categories of non\--unital and unital module monoidal categories, their functors and natural transformations. We will show that in the unital case the latter definition is equivalent to the definition in \cite{HPT15} by explicitly writing down an equivalence of 2\==categories.
\end{abstract}
\newpage

\tableofcontents\newpage

\section{Introduction and Summary}\label{secIntroduction}
In this paper, we formulate a categorification for non\--unital and unital  associative algebras over unital commutative rings. Recall that a non\--unital associative algebra over a commutative ring $R$ is a non\--unital ring $A$ carrying the structure of an $R$\==module, such that the multiplication in $A$ and the $R$\==action on $A$ are compatible in the sense that $r.(ab)=(r.a)b=a(r.b)$ for all $r\in R$ and $a,b\in A$.

If $A$ happens to be unital as a ring, then the algebra is called unital. In this case, it can equivalently be described as a ring homomorphism $f\colon R\longrightarrow Z(A)$ from $R$ into the center of $A$. Note, that these characterizations fail to be equivalent if the algebra is non\--unital: Let $k$ be a field and consider the ideal $Xk[X]$ in the polynomial ring $k[X]$. It inherits the structure of a (non\--unital) ring as well as a compatible $k$\==module structure from $k[X]$, 
turning it into a $k$\==algebra. However, the unique ring homomorphism $f\colon k\longrightarrow Xk[X]$ is trivial.

Now, to categorify the second characterization of an unital associative algebra from above, i.e.\ that via a ring homomorphism $f\colon R\longrightarrow Z(A)$, one makes the usual approach to take a monoidal category $\ST$ for the ring $A$, the Drinfeld center $Z(\ST)$ for its center and a braided monoidal category $\SV$ for the commutative ring $R$. For the ring homomorphism $f\colon R\longrightarrow Z(A)$, one needs the notion of a braided central monoidal functor $F\colon\SV\longrightarrow\ST$ \cite{Bezrukavnikov11,DMNO11}, which is a monoidal functor $F\colon \SV\longrightarrow\ST$ together with a braided monoidal lift $F^Z\colon \SV\longrightarrow Z(\ST)$ to the Drinfeld center of $\ST$, i.e.\ $F=R\circ F^Z$ with $R$ the forgetful functor. This was used as a definition of a module tensor category in \cite[Sec.\,3.2]{HPT15}. Namely, a module tensor category over $\SV$ is a pair $(\ST,F)$ consisting of a monoidal category $\ST$ and a braided central monoidal functor $F\colon \SV\longrightarrow \ST$. We refer to this definition of a module tensor category as \emph{pair} in this work. 

From this, assuming more structure, one can prove an equivalence between pointed pivotal module tensor categories and anchored planar algebras \cite{HPT15,HPT16} and define unitarity for both notions \cite{HPT23}. Pairs appeared as 1\==morphisms in the 4\==category $\BrTens$ of braided monoidal tensor categories \cite{BJS21} as well as a tool for showing results on Landau\--Ginzburg/\linebreak[0]Conformal Field Theory correspondence \cite{CW22} and for describing boundary conditions in 3d TFTs \cite{FSV13}.

In this work, we want to categorify the first characterization of associative algebras from above, i.e.\ that via a compatible $R$\==module structure, as \emph{(non\--unital) module monoidal categories\footnote{Since the term \emph{tensor category} is often meant to imply linearity, rigidity and finiteness conditions \cite[Def.\,4.1.1]{EGNO}, here the term \emph{monoidal category} is used.}}. A non\--unital module monoidal category $\SM$ over $\SV$ is a tuple $(\SM,\otimes,a,\rhd,m,l^\rhd,\alp1,\alp2)$ with $(\SM,\otimes,a)$ a monoidal category, $(\SM,\rhd,m,l^\rhd)$ a module category over a braided monoidal category $\SV$ and $\alp1$, $\alp2$ coherence isomorphisms reflecting the equations $r.(ab)=(r.a)b$ and $r.(ab)=a(r.b)$, respectively. Naturally, these data have to make several coherence diagrams commute. For unital module monoidal categories, one requires the monoidal category $\SM$ to be unital.

Note that, besides the viewpoints of pairs and unital module monoidal categories, these objects can under certain assumptions equivalently be described as monoidal categories enriched in braided monoidal categories \cite{MP17,MPP18,JMPP19,KYZZ21,Dell21,DHP22}.

Similar to module monoidal categories, but not the same, is the notion of augmented monoidal categories studied in \cite{Laugwitz19}. These are a categorification of an algebra over a commutative ring which is simultaneously augmented in the base ring.
Another similar notion is that of a $\SV$\==central monoidal category used in \cite{LW} to define relative Drinfeld centers as M\"uger centralizers.

The first main result of this paper is the construction of the 2\==categories $\ModMonCat$ (cf.\ Def.\,\ref{def2CategoryModMonCat}) and $\UModMonCat$ (cf.\ Def.\,\ref{def2CategoryUModMonCat}) of non\--unital (resp.\ unital) module monoidal categories, their functors and natural transformations.

The second main result is the equivalence of $\UModMonCat$ to the 2\==category $\Pairs$ (cf.\ Def.\,\ref{def2CategoryPairs}) consisting of pairs $(\ST,F)$ as above, their morphisms and their 2\==morphisms, stated in Theorem \ref{thmEquivalenceUnitalModuleMonoidalCategoriesAndPairs}. An overview of both 2\==categories is provided in Table~\ref{tab:overview}.

\bigskip
\textbf{Theorem \ref{thmEquivalenceUnitalModuleMonoidalCategoriesAndPairs}. }
Let $\SV$ be a braided monoidal category. There is an equivalence of 2\==categories
\[\UModMonCat\cong \Pairs.\]

\begin{table}[t]
    \centering
    \begin{tabular}{c|c|c}
    &$\UModMonCat$ & $\Pairs$  \\\hline
    objects & $\begin{array}{c}\text{unital module monoidal categories}\\ (\SM,\otimes,\1,a,l,r,\rhd,m,l^\rhd,\alp1,\alp2)\\(\text{Def.\,}\ref{defUnitalModuleMonoidalCategory})\end{array}$ & $\begin{array}{c}\text{pairs}\\ (\ST,F)\\(\text{Def.\,}\ref{defPair})\end{array}$\\\hline
    1\==morphisms & $\begin{array}{c}\text{unital module monoidal functors}\\ (F,\varphi_2,\varphi_0,s)\colon \SM\longrightarrow\SM'\\(\text{Def.\,}\ref{defUnitalModuleMonoidalFunctor})\end{array}$ & $\begin{array}{c}\text{morphisms of pairs}\\ (G,\varphi_2,\varphi_0,\gamma)\colon (\ST,F)\longrightarrow(\ST',F')\\(\text{Def.\,}\ref{defMorphismOfPairs})\end{array}$ \\\hline
    2\==morphisms & $\begin{array}{c}\text{unital module monoidal}\\\text{natural transformations}\\(\text{Def.\,}\ref{defUnitalModuleMonoidalNaturalTransformation})\end{array}$ & $\begin{array}{c}\text{2\==morphisms of pairs}\\(\text{Def.\,}\ref{def2MorphismOfPairs})\end{array}$
    \end{tabular}
    \caption{The 2\==categories $\UModMonCat$ and $\Pairs$.}
    \label{tab:overview}
\end{table}

The advantage of the categorification of associative algebras presented in this paper is the more general non\--unital case of module monoidal categories, which is interesting because, for example, they arise from so\--called orbifold data \cite{CRS20,Mulevicius22}, see \cite{HR}.

We finish the introduction with a short overview of the content of this paper. We start by briefly recalling some basics on braided monoidal and module category theory in Section \ref{secPreliminaries} to state the notation and conventions used in this paper. From that, we define the notions of non\--unital module monoidal category, their functors and their 2\==morphisms as well as their unital versions in Section \ref{secDefinitions}. In Section \ref{secPairs}, we recall the definitions of pairs, morphisms of pairs and 2\==morphisms of pairs. Finally, in Section \ref{secEquivalence}, we state and prove our main theorem \ref{thmEquivalenceUnitalModuleMonoidalCategoriesAndPairs} from above.

\subsection*{Acknowledgements}
I would like to thank Ingo Runkel for many prosperous discussions and comments as well as Vincentas Mulevi\v{c}ius for comments on the draft.

\section{Preliminaries}\label{secPreliminaries}

We assume that the reader is used to the notion of a (braided) monoidal category. However, for the sake of notation and better referability later on, we quickly recall some important definitions. In this process, commutative diagrams are labelled by abbreviations related to the conditions they represent, rather than equation numbers, to make their later use in proofs easier to follow. All abbreviations are collected in Appendix \ref{secAppendixGlossary}.
\subsection{Non--unital and unital monoidal categories}

\begin{Definition}[{\cite[Def.\,2.2.8, Rem.\,2.2.9]{EGNO}}]\label{defMonoidalCategory}
A \emph{non\--unital monoidal category} (or \emph{semigroup category}) is a triple $(\SV, \otimes, a)$ where $\SV$ is a category, $\otimes\colon \SV\times\SV\longrightarrow\SV$ is the \emph{tensor functor} or \emph{tensor product} and $a\colon(-\otimes -)\otimes - \Longrightarrow -\otimes (-\otimes -)$ is the \emph{associator} natural isomorphism. These data have to satisfy the pentagon (or associativity) axiom, i.e.\ the diagram
\begin{equation}\tag{$\text{pent.\,}a$}\label{pentA}
\begin{tikzcd}
 & ((u\otimes v)\otimes w)\otimes x \arrow[rd, "{a_{u\otimes v,w,x}}"] \arrow[ld, "{a_{u,v,w}\,\otimes\,\id_x}"'] &\\
(u\otimes(v\otimes w))\otimes x \arrow[d, "{a_{u,v\otimes w,x}}"'] && (u\otimes v)\otimes (w\otimes x) \arrow[d, "{a_{u,v,w\otimes x}}"] \\
u\otimes((v\otimes w)\otimes x) \arrow[rr, "{\id_u\,\otimes\,a_{v,w,x}}"'] && u\otimes(v\otimes (w\otimes x)) 
\end{tikzcd}
\end{equation}
commutes for all $u,v,w,x\in\SV$. A non\--unital monoidal category is called \emph{strict} if the associator is the identity.

A \emph{unital monoidal category} is a sextuple $(\SV, \otimes, \1_\SV, a, l, r)$ where $(\SV,\otimes,a)$ is a non\--unital monoidal category, $\1_\SV$ is the \emph{unit object} of $\SV$ and $l\colon \1_\SV\otimes\strut -\Longrightarrow \Id_\SV$ and $r\colon -\strut\otimes \1_\SV\Longrightarrow \Id_\SV$ are the \emph{left} and \emph{right unit constraint} natural isomorphisms. These data have to satisfy the triangle (or unitality) axiom, i.e.\ the diagram
\begin{equation}\tag{$\text{tri.\,}a$}\label{triangleA}
\begin{tikzcd}
(v\otimes \1_\SV)\otimes w \arrow[rr, "{a_{v,\1_\SV,w}}"] \arrow[rd, "{r_v\,\otimes\,\id_w}"'] && v\otimes (\1_\SV\otimes w) \arrow[ld, "{\id_v\,\otimes\,l_w}"] \\
 & v\otimes w & 
\end{tikzcd}
\end{equation}
commutes for all $v,w\in\SV$. A unital monoidal category is called \emph{strict} if the natural isomorphisms $a$, $l$ and $r$ are identities.
\end{Definition}

In any unital monoidal category, two diagrams similar to \eqref{triangleA}, namely
\begin{equation}\tag{$a\,\&\,l$}\label{aL}
\begin{tikzcd}
(\1_\SV\otimes v)\otimes w \arrow[rr, "{a_{\1_\SV,v,w}}"] \arrow[rd, "{l_v\,\otimes\,\id_w}"'] && \1_\SV\otimes (v\otimes w) \arrow[ld, "{l_{v\otimes w}}"] \\
 & v\otimes w & 
\end{tikzcd}
\end{equation}
and
\begin{equation}\tag{$a\,\&\,r$}\label{aR}
\begin{tikzcd}
(v\otimes w)\otimes \1_\SV \arrow[rr, "{a_{v,w,\1_\SV}}"] \arrow[rd, "{r_{v\otimes w}}"'] && v\otimes (w\otimes \1_\SV) \arrow[ld, "{\id_v\,\otimes\,r_w}"] \\
 & v\otimes w & 
\end{tikzcd}
\end{equation}
also commute for all $v,w\in\SV$. A proof can be found in \cite[Prop.\,2.2.4]{EGNO}.

\begin{Definition}[{\cite[Def.\,2.4.5]{EGNO}}]\label{defMonoidalFunctor}
A \emph{non\--unital monoidal functor} from a non\--unital monoidal category $\SV=(\SV,\otimes,a)$ to a non\--unital monoidal category $\SV'=(\SV',\otimes',a')$ is a tuple $(F,\varphi_2)\colon\SV\longrightarrow\SV'$ where $F\colon\SV\longrightarrow\SV'$ is a functor and $\varphi_2\colon F(-)\otimes' F(-)\Longrightarrow F(-\otimes -)$ is the \emph{monoidal structure} natural isomorphism making the hexagon
\begin{equation}\tag{$\text{hex.\,}\varphi_2$}\label{phi2Hexagon}
\begin{tikzcd}[column sep = huge]
(F(u)\otimes' F(v))\otimes' F(w) \arrow[d, "{a'_{F(u),F(v),F(w)}}"'] \arrow[r, "{\varphi_{2,u,v}\,\otimes'\,\id_{F(w)}}"] & F(u\otimes v)\otimes' F(w) \arrow[r, "{\varphi_{2,u\otimes v,w}}"]& F((u\otimes v)\otimes w) \arrow[d, "{F(a_{u,v,w})}"] \\
F(u)\otimes'(F(v)\otimes' F(w)) \arrow[r, "{\id_{F(u)}\,\otimes'\,\varphi_{2,v,w}}"'] & F(u)\otimes' F(v\otimes w) \arrow[r, "{\varphi_{2,u,v\otimes w}}"'] & F(u\otimes (v\otimes w))
\end{tikzcd}
\end{equation}
commute for all $u,v,w\in\SV$.

A \emph{unital monoidal functor} from a unital monoidal category $\SV$ to a unital monoidal category $\SV'$ is a tuple $(F,\varphi_2,\varphi_0)\colon\SV\longrightarrow\SV'$ where $(F,\varphi_2)$ is a non\--unital monoidal functor and $\varphi_0\colon\1_{\SV'}\longrightarrow F(\1_\SV)$ is an isomorphism in $\SV'$ making the squares
\begin{equation}\tag{$\varphi_0\,\&\,l$}\label{phi0l}
\begin{tikzcd}
\1_{\SV'}\otimes'F(v) \arrow[d, "{\varphi_0\,\otimes'\,\id_{F(v)}}"'] \arrow[r, "l'_{F(v)}"] & F(v)\\
F(\1_\SV)\otimes'F(v) \arrow[r, "{\varphi_{2,\1_\SV,v}}"'] & F(\1_\SV\otimes v) \arrow[u, "F(l_v)"']
\end{tikzcd}
\end{equation}
and
\begin{equation}\tag{$\varphi_0\,\&\,r$}\label{phi0r}
\begin{tikzcd}
F(v)\otimes' \1_{\SV'} \arrow[r, "r'_{F(v)}"] \arrow[d, "{\id_{F(v)}\,\otimes'\,\varphi_0}"'] & F(v) \\
F(v)\otimes' F(\1_\SV) \arrow[r, "{\varphi_{2,v,\1_\SV}}"'] & F(v\otimes\1_\SV) \arrow[u, "F(r_v)"']
\end{tikzcd}
\end{equation}
commute for all $v\in\SV$.
\end{Definition}

\begin{Remark}\label{remCompMonoidalFunctors}
Given three non\--unital (resp.\ unital) monoidal categories $\SV$, $\SV'$, $\SV''$ and two non\--unital (resp.\ unital) monoidal functors $F\colon\SV\longrightarrow\SV'$ and $G\colon\SV'\longrightarrow\SV''$, their composition $(H,\varphi_2^{H})$ (resp.\ $(H,\varphi_2^H,\varphi_0^H)$) with $H\coloneqq GF$ and
\begin{align}\tag{compos.\,$\varphi_2$}\label{composPhi2}
\varphi_{2,v,w}^{H}&\coloneqq \left[GF(v)\otimes''GF(w)\xrightarrow[]{\varphi_{2,F(v),F(w)}^G}G(F(v)\otimes'F(w))\xrightarrow[]{G(\varphi_{2,v,w}^F)}GF(v\otimes w)\right],\\\tag{compos.\,$\varphi_0$}\label{composPhi0}
\varphi^H_0&\coloneqq\left[\1_{\SV''}\xrightarrow[]{\varphi_0^G}G(\1_{\SV'})\xrightarrow[]{G(\varphi_0^F)}GF(\1_\SV)\right]
\end{align}
is again a non\--unital (resp.\ unital) monoidal functor (cf.\ \cite[Rem.\,2.4.7]{EGNO}).
\end{Remark}

As it is done here, we usually ease notation by omitting the $\circ$ for the composition of functors. Further, we might give definitions and equations of morphisms in the notation above, which has to be read e.g.\ for the second equation as $\varphi^H_0\coloneqq G(\varphi_0^F)\circ \varphi_0^G$.

\begin{Definition}\label{defMonoidalNaturalTransformation}
Let $\SV$, $\SV'$ be two non\--unital monoidal categories and let $F,G\colon\SV\longrightarrow\SV'$ be two non\--unital monoidal functors between them. A \emph{non\--unital monoidal natural transformation} $\eta\colon F\Longrightarrow G$ is a natural transformation which is compatible with the monoidal structures on $F$ and $G$, i.e.\ the diagram
\begin{equation}\tag{nat.\,tr.\,\&\,$\varphi_2$}\label{monNatTrafoPhi2}
\begin{tikzcd}
F(v)\otimes' F(w) \arrow[d, "\eta_v\,\otimes'\,\eta_w"'] \arrow[r, "{\varphi_{2,v,w}^F}"] & F(v\otimes w) \arrow[d, "\eta_{v\otimes w}"] \\
G(v)\otimes' G(w) \arrow[r, "{\varphi_{2,v,w}^G}"']& G(v\otimes w) 
\end{tikzcd}
\end{equation}
commutes for all $v,w\in\SV$.

For $\SV$, $\SV'$ unital monoidal categories and $F,G$ unital monoidal functors, we define $\eta\colon F\Longrightarrow G$ to be a \emph{unital monoidal natural transformation} if it is a non\--unital monoidal natural transformation compatible with the unital monoidal structures on $F$ and $G$, i.e.\ the diagram
\begin{equation}\tag{nat.\,tr.\,\&\,$\varphi_0$}\label{monNatTrafoPhi0}
\begin{tikzcd}
& F(\1_\SV) \arrow[dd, "\eta_{\1_\SV}"] \\
\1_{\SV'} \arrow[rd, "\varphi_0^G"'] \arrow[ru, "\varphi_0^F"] & \\
& G(\1_\SV)
\end{tikzcd}
\end{equation}
commutes.
\end{Definition}

\begin{Definition}\label{defEquivalenceMonoidalCategories}
Let $\SV$, $\SV'$ be two (non\==)unital monoidal categories and let $F\colon \SV\longrightarrow\SV'$ be a (non\==)unital monoidal functor. We call $F$ an \emph{equivalence of (non\==)unital monoidal categories} if there exist a (non\==)unital monoidal functor $G\colon \SV'\longrightarrow\SV$ and (non\==)unital monoidal natural isomorphisms $\eta\colon \Id_\SV\Longrightarrow GF$ and $\varepsilon\colon F G\Longrightarrow\Id_{\SV'}$.
\end{Definition}

Next, we see a very useful statement on equivalences of (non\==)unital monoidal categories, which highly reduces the amount of data needed to get an equivalence of (non\==)unital monoidal categories.

Before we do this, recall from \cite[Thm\,3.1.5\,(2)]{Borceux} the definition of an \emph{adjunction} $(F,G,\eta,\varepsilon)$ from a category $\SC$ to a category $\SD$, consisting of two functors $F\colon \SC\longrightarrow \SD$, $G\colon\SD\longrightarrow \SC$ and natural transformations $\eta\colon\Id_\SC\Longrightarrow GF$, $\varepsilon\colon FG\Longrightarrow \Id_\SD$ satisfying the zig\--zag\--relations
\begin{equation}\label{adjunctionZigzag}\tag{zig--zag}
\begin{aligned}
\left[F(c)\xrightarrow[]{F(\eta_c)}FGF(c)\xrightarrow[]{\varepsilon_{F(c)}}F(c)\right]&=\left[F(c)\xrightarrow[]{\id_{F(c)}}F(c)\right],\\
\left[G(d)\xrightarrow[]{\eta_{G(d)}}GFG(d)\xrightarrow[]{G(\varepsilon_d)}G(d)\right]&=\left[G(d)\xrightarrow[]{\id_{G(d)}}G(d)\right]
\end{aligned}
\end{equation}
for all $c\in\SC$ and $d\in\SD$. In this situation, $F$ is called the \emph{left adjoint} and $G$ is called the \emph{right adjoint}.

\begin{Definition}[{\cite[IV.4, p.\,93]{MacLane}}]\label{defAdjointEquivalence}
Let $\SC$, $\SD$ be categories. An \emph{adjoint equivalence} between $\SC$ and $\SD$ is an adjunction $(F,G,\eta,\varepsilon)$ such that $\eta$ and $\varepsilon$ are natural isomorphisms.
\end{Definition}

Obviously, any adjoint equivalence of categories is an equivalence of categories. In \cite[Thm.\,IV.4.1]{MacLane} it is proved that ``being adjoint equivalent'' is equivalent to ``being equivalent'' and further that, given an equivalence $F$, it is always possible to find an adjoint equivalence $(F,G,\eta,\varepsilon)$ containing the very same $F$.

\begin{Proposition}\label{propAlternativeEquivalenceMonoidalCategories}
Let $\SV$, $\SV'$ be two (non\==)unital monoidal categories and let $F\colon \SV\longrightarrow\SV'$ be a (non\==)unital monoidal functor. Then the following statements are equivalent:
\begin{itemize}
    \item[(i)] $F$ is an equivalence of (non\==)unital monoidal categories.
    \item[(ii)] $F$ is part of an adjoint equivalence of (non\==)unital monoidal categories.
    \item[(iii)] $F$ is an equivalence of ordinary categories.
\end{itemize}
\end{Proposition}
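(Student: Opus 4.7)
The implications (ii)$\Rightarrow$(i) and (i)$\Rightarrow$(iii) are essentially formal: (ii)$\Rightarrow$(i) is immediate since an adjoint equivalence is in particular an equivalence, and (i)$\Rightarrow$(iii) follows by forgetting the monoidal data from the quasi\-/inverse and from the monoidal natural isomorphisms $\eta$, $\varepsilon$ of Definition~\ref{defEquivalenceMonoidalCategories}. All the real content lies in (iii)$\Rightarrow$(ii).

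Given (iii), invoke \cite[Thm.~IV.4.1]{MacLane} to promote $F$ to an adjoint equivalence $(F,G,\eta,\varepsilon)$ of ordinary categories; then $\eta$ and $\varepsilon$ are natural isomorphisms and satisfy the zig\-/zag identities \eqref{adjunctionZigzag}. It remains to endow $G$ with the structure of a (non\-/)unital monoidal functor and to upgrade $\eta$, $\varepsilon$ to (non\-/)unital monoidal natural transformations. The natural candidate for $\varphi_2^G$ is the adjoint mate of $(\varepsilon\otimes'\varepsilon)\circ(\varphi_2^F)^{-1}$, i.e.\ the composite
\[
G(a)\otimes G(b)\xrightarrow{\eta_{G(a)\otimes G(b)}}GF(G(a)\otimes G(b))\xrightarrow{G((\varphi_{2,G(a),G(b)}^F)^{-1})}G(FG(a)\otimes' FG(b))\xrightarrow{G(\varepsilon_a\otimes'\varepsilon_b)}G(a\otimes' b),
\]
and, in the unital case, $\varphi_0^G$ as the mate of $(\varphi_0^F)^{-1}$, namely
\[
\1_\SV\xrightarrow{\eta_{\1_\SV}}GF(\1_\SV)\xrightarrow{G((\varphi_0^F)^{-1})}G(\1_{\SV'}).
\]
Both are isomorphisms because every building block is.

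Next, verify that $(G,\varphi_2^G,\varphi_0^G)$ satisfies \eqref{phi2Hexagon}, \eqref{phi0l} and \eqref{phi0r}: each diagram reduces, by the naturality of $\eta$, to the corresponding diagram for $F$ applied after $G$, followed by collapsing $FG\to\Id$ using $\varepsilon$; since $\varphi_2^F$ and $\varphi_0^F$ are invertible, the hexagon~\eqref{phi2Hexagon} and squares~\eqref{phi0l}, \eqref{phi0r} for $F$ translate directly into those for $G$. Finally, show that $\eta$ and $\varepsilon$ are monoidal: the condition \eqref{monNatTrafoPhi2} for $\eta\colon\Id_\SV\Rightarrow GF$ reads $\eta_{v\otimes w}=G(\varphi_{2,v,w}^F)\circ\varphi_{2,F(v),F(w)}^G\circ(\eta_v\otimes\eta_w)$, which, after unfolding the definition of $\varphi_2^G$, collapses by naturality of $\eta$ and one zig\-/zag identity; the analogous statement for $\varepsilon$ and for the unital compatibility \eqref{monNatTrafoPhi0} is dual.

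The only genuine obstacle is the bookkeeping in these compatibility verifications: each is a diagram chase combining naturality of $\eta$ or $\varepsilon$, the zig\-/zag relations, and the axioms for $F$. None of the individual steps is difficult, but the hexagon \eqref{phi2Hexagon} for $G$ in particular is a sizeable diagram. I would organize the verification by first proving monoidality of $\varepsilon$ (which uses only the definition of $\varphi_2^G$ and a single zig\-/zag), then deriving monoidality of $\eta$ from it via the adjunction, and finally deducing the hexagon for $G$ by transporting the hexagon for $F$ through the monoidal natural isomorphisms $\eta$ and $\varepsilon$.
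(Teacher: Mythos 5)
Your proposal is correct, but it takes a different route from the paper: the paper's entire proof is a citation, namely ``By \cite[Prop.\,I.4.4.2]{Rivano}, (iii) is equivalent to (ii). Obviously, (ii) implies (i) and (i) implies (iii).'' You instead prove the substantive implication (iii)$\Rightarrow$(ii) directly, via the standard doctrinal-adjunction argument: promote $F$ to an adjoint equivalence of ordinary categories using \cite[Thm.\,IV.4.1]{MacLane}, define $\varphi_2^G$ and $\varphi_0^G$ as the mates of $(\varepsilon\otimes'\varepsilon)\circ(\varphi_2^F)^{-1}$ and $(\varphi_0^F)^{-1}$, and check the coherence axioms by naturality and the zig\--zag identities. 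Your mate formulas are the right ones (they agree, via the zig\--zag relations and full faithfulness of $F$, with the formulas $\varphi_{2,m,n}^G= F^{-1}(\varepsilon_{m\otimes' n}^{-1}\circ (\varepsilon_m\otimes'\varepsilon_n)\circ (\varphi_{2,G(m),G(n)}^F)^{-1})$ that the author writes down later in the proof of Proposition \ref{propAlternativeEquivalenceNonUnitalModuleMonoidalCategories}, which explicitly leans on ``the proofs of Propositions \ref{propAlternativeEquivalenceMonoidalCategories} and \ref{propAlternativeEquivalenceModuleCategories}''), and your verification sketch for \eqref{monNatTrafoPhi2} does collapse exactly as you say. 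What the paper's approach buys is brevity; what yours buys is self-containedness and an explicit construction that the later propositions in the paper actually need. The remaining diagram chases (the hexagon \eqref{phi2Hexagon} for $G$ in particular) are genuinely routine, so I see no gap, only deferred bookkeeping that you have correctly identified and organized.
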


\begin{proof}
    By \cite[Prop.\,I.4.4.2]{Rivano}, (iii) is equivalent to (ii). Obviously, (ii) implies (i) and (i) implies (iii).
\end{proof}

From now on, by monoidal categories/functors/natural transformations we mean unital monoidal categories/functors/natural transformations. If something is meant to be non\--unital, we will explicitly write so.

\subsection{Braided monoidal categories}\label{secBasicsBraidedCategories}
The definitions of a pair and a module monoidal category will be ``over a braided monoidal category~$\SV$''. Therefore, we quickly recall the notion of a braided (unital) monoidal category and its functors. Note, that there is also an alternative description of braided monoidal categories using the so--called $b$--structure, which also investigates non--unital braided monoidal categories \cite{DR13}.

\begin{Definition}[{\cite[Def.\,8.1.1\,\&\,8.1.2]{EGNO}}]\label{defBraidedCategory}
A \emph{braiding} on a monoidal category $\SV$ is a natural isomorphism $\beta^\SV\colon -\otimes -\Longrightarrow - \otimes -$ with components
\[\beta_{v,w}^\SV\colon v\otimes w\longrightarrow w\otimes v\]
for all $v,w\in\SV$ which makes the two \emph{braiding hexagons}
\begin{equation}\tag{br.\,hex.\,1}\label{braidingHexagon1}
\begin{tikzcd}[column sep = huge]
(u\otimes v)\otimes w \arrow[r, "{a_{u,v,w}}"] \arrow[d, "{\beta^\SV_{u,v}\,\otimes\,\id_w}"'] & u\otimes (v\otimes w) \arrow[r, "{\beta^\SV_{u,v\otimes w}}"] & (v\otimes w)\otimes u \arrow[d, "{a_{v,w,u}}"] \\
(v\otimes u)\otimes w \arrow[r, "{a_{v,u,w}}"']& v\otimes (u\otimes w) \arrow[r, "{\id_v\,\otimes\,\beta_{u,w}^\SV}"'] & v\otimes (w\otimes u) 
\end{tikzcd}
\end{equation}
and
\begin{equation}\tag{br.\,hex.\,2}\label{braidingHexagon2}
\begin{tikzcd}[column sep = huge]
u\otimes(v\otimes w) \arrow[r, "{a_{u,v,w}^{-1}}"] \arrow[d, "{\id_u\,\otimes\,\beta_{v,w}^\SV}"'] & (u\otimes v)\otimes w \arrow[r, "{\beta^\SV_{u\otimes v,w}}"] & w\otimes (u\otimes v) \arrow[d, "{a_{w,u,v}^{-1}}"] \\
u\otimes(w\otimes v) \arrow[r, "{a_{u,w,v}^{-1}}"']& (u\otimes w)\otimes v \arrow[r, "{\beta^\SV_{u,w}\,\otimes\,\id_v}"'] & (w\otimes u)\otimes v
\end{tikzcd}
\end{equation}
commute for all $u,v,w\in\SV$. A \emph{braided} monoidal category $(\SV,\beta^\SV)$ is a monoidal category together with a braiding on it.
\end{Definition}
Note, that there are possibly different choices for a braiding on a given monoidal category $\SV$.

It is not necessary to impose the right behavior on the braiding and the unit (constraints) of the monoidal category $\SV$, since it holds for free as the following statement shows.

\begin{Proposition}[{\cite[Prop.\,1.1]{JoyalStreet}}]\label{propPropertiesBraidingAndUnit}
Let $\SV$ be a braided monoidal category. Then the diagrams
\begin{equation}\label{braidingV1}
\begin{tikzcd}
v\otimes\1_\SV \arrow[rr, "{\beta_{v,\1_\SV}^\SV}"] \arrow[rd, "r_v"'] & & \1_\SV\otimes v \arrow[ld, "l_v"] \\
 & v &
\end{tikzcd}
\end{equation}
and
\begin{equation}\label{braiding1V}
\begin{tikzcd}
\1_\SV\otimes v \arrow[rr, "{\beta_{\1_\SV,v}^\SV}"] \arrow[rd, "l_v"'] & & v\otimes\1_\SV \arrow[ld, "r_v"] \\
& v & 
\end{tikzcd}
\end{equation}
commute for all $v\in\SV$.
\end{Proposition}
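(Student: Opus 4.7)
The plan is to derive both triangles from the braiding hexagons by substituting the unit $\1_\SV$ for one of the three objects and then using the triangle axiom \eqref{triangleA}, its companions \eqref{aL} and \eqref{aR}, and naturality of $\beta^\SV$ in both arguments. Morally, the unit object is transparent to the braiding, so instantiating a hexagon at $\1_\SV$ should degenerate it to a triangle after invertible morphisms are cancelled.

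Concretely, for diagram \eqref{braidingV1} I would take \eqref{braidingHexagon1} and substitute $w=\1_\SV$. The resulting equality lives between morphisms $(u\otimes v)\otimes\1_\SV \longrightarrow v\otimes(\1_\SV\otimes u)$, and its six edges involve the associators $a_{u,v,\1_\SV}$, $a_{v,\1_\SV,u}$, $a_{v,u,\1_\SV}$, together with $\beta^\SV_{u,v\otimes\1_\SV}$, $\beta^\SV_{u,v}\otimes\id_{\1_\SV}$ and $\id_v\otimes\beta^\SV_{u,\1_\SV}$. The first three associators can be rewritten in terms of unit constraints using \eqref{aR} and \eqref{triangleA}, while the naturality square for $\beta^\SV$ applied to $r_v\colon v\otimes\1_\SV\longrightarrow v$ gives
\[
(r_v\otimes\id_u)\circ \beta^\SV_{u,v\otimes\1_\SV}\;=\;\beta^\SV_{u,v}\circ(\id_u\otimes r_v),
\]
so that the ``big'' braiding in the hexagon can be replaced by $\beta^\SV_{u,v}$ up to $r_v$. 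Plugging everything in and cancelling the invertible morphisms $\beta^\SV_{u,v}$, $a$, $r$ common to both paths should reduce the hexagon to the identity $l_v\circ\beta^\SV_{v,\1_\SV}=r_v$ (the free variable $u$ playing the role of $v$ in \eqref{braidingV1}).

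The second diagram \eqref{braiding1V} admits an entirely analogous argument starting from \eqref{braidingHexagon2} with, say, $u=\1_\SV$; alternatively, one may observe that the formula $\bar\beta^\SV_{v,w}\coloneqq(\beta^\SV_{w,v})^{-1}$ defines another braiding on $\SV$ and apply the already proven identity to $\bar\beta^\SV$, which swaps the roles of $l$ and $r$ and produces \eqref{braiding1V}.

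The main obstacle will be the bookkeeping: the hexagon after substitution still contains three associators and two braiding instances that are \emph{not} the one we want to isolate, so the reduction must carefully apply \eqref{aL}/\eqref{aR}/\eqref{triangleA} and naturality of $\beta^\SV$ in a consistent order, avoiding any implicit use of the unproven triangles \eqref{braidingV1}/\eqref{braiding1V} themselves. A subsidiary subtlety is that Kelly's identity $l_{\1_\SV}=r_{\1_\SV}$ (proved in \cite[Prop.\,2.2.6]{EGNO} purely from \eqref{pentA} and \eqref{triangleA}) may be needed to close the chase; since this is established without reference to any braiding, invoking it introduces no circularity.
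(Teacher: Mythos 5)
The paper does not actually prove this proposition -- it defers to \cite[Prop.\,1.1]{JoyalStreet} -- so there is no in-paper argument to compare against; your sketch is essentially the standard (Joyal--Street) proof, and its outline is correct. Instantiating \eqref{braidingHexagon1} at $w=\1_\SV$, post-composing with $\id_v\otimes l_u$, and using \eqref{triangleA}, \eqref{aR}, naturality of $\beta^\SV$ in its second argument along $r_v$, and naturality of $r$ does reduce both sides to expressions agreeing up to the common invertible factors $a_{v,u,\1_\SV}$ and $\beta^\SV_{u,v}\otimes\id_{\1_\SV}$; the analogous reduction of \eqref{braidingHexagon2} at $u=\1_\SV$ (or your reverse-braiding trick) handles \eqref{braiding1V}.

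One step is glossed over by the phrase ``cancelling the invertible morphisms'': after all legitimate cancellations you are left not with $l_u\circ\beta^\SV_{u,\1_\SV}=r_u$ itself but with $\id_v\otimes\bigl(l_u\circ\beta^\SV_{u,\1_\SV}\bigr)=\id_v\otimes r_u$ for all $v$ (resp.\ an equation of the form $f\otimes\id_w=g\otimes\id_w$ in the second case). Passing from $\id_v\otimes f=\id_v\otimes g$ to $f=g$ is not cancellation of an invertible morphism; it requires faithfulness of the functor $v\otimes-$ for some $v$, which you get by specializing $v=\1_\SV$ and conjugating with the natural isomorphism $l$: naturality gives $f=l\circ(\id_{\1_\SV}\otimes f)\circ l^{-1}$, whence $f=g$. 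With that line added the proof is complete. (Conversely, Kelly's identity $l_{\1_\SV}=r_{\1_\SV}$, which you flag as possibly needed, does not actually enter.)
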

The reader can find a proof in the reference.

\begin{Definition}[{\cite[Def.\,8.1.7]{EGNO}}]\label{defBraidedMonoidalFunctor}
Let $\SV$ and $\SV'$ be braided monoidal categories. A \emph{braided monoidal functor} $(F,\varphi_2,\varphi_0)\colon \SV\longrightarrow \SV'$ is a monoidal functor making the diagram
\begin{equation}\tag{$\beta\,\&\,\varphi_2$}\label{braidedMonoidalFunctor}
\begin{tikzcd}[column sep = huge]
F(v)\otimes' F(w) \arrow[d, "{\varphi_{2,v,w}}"'] \arrow[r, "{\beta^{\SV'}_{F(v),F(w)}}"] & F(w)\otimes' F(v) \arrow[d, "{\varphi_{2,w,v}}"] \\
F(v\otimes w) \arrow[r, "{F(\beta^\SV_{v,w})}"'] & F(w\otimes v)
\end{tikzcd}
\end{equation}
commute for all $v,w\in\SV$.
\end{Definition}

\begin{Remark}\label{remCompBraidedMonoidalFunctors}
The notion of a ``braided monoidal natural transformation'' does not exist; morphisms between braided monoidal functors are just monoidal natural transformations (cf.\ Def.\,\ref{defMonoidalNaturalTransformation}).

Given three braided monoidal categories $\SV$, $\SV'$, $\SV''$ and two braided monoidal functors $F\colon\SV\longrightarrow\SV'$ and $G\colon\SV'\longrightarrow\SV''$, their composition $(H,\varphi_2^H,\varphi_0^H)$ of monoidal functors as in Remark \ref{remCompMonoidalFunctors} is again a braided monoidal functor.
\end{Remark}

\begin{Definition}\label{defEquivalenceBraidedMonoidalCategories}
Let $\SV$, $\SV'$ be two braided monoidal categories and let $F\colon \SV\longrightarrow\SV'$ be a braided monoidal functor. We call $F$ an \emph{equivalence of braided monoidal categories} if there exist a braided monoidal functor $G\colon \SV'\longrightarrow\SV$ and monoidal natural isomorphisms $\eta\colon \Id_\SM\Longrightarrow GF$ and $\varepsilon\colon FG\Longrightarrow\Id_{\SM'}$.
\end{Definition}

As before for monoidal categories (cf.\ Prop.\,\ref{propAlternativeEquivalenceMonoidalCategories}), we get the following statement on equivalences of braided monoidal categories.

\begin{Proposition}\label{propAlternativeEquivalenceBraidedMonoidalCategories}
Let $\SV$, $\SV'$ be two braided monoidal categories and let $F\colon \SV\longrightarrow\SV'$ be a braided monoidal functor. Then the following statements are equivalent:
\begin{itemize}
    \item[(i)] $F$ is an equivalence of braided monoidal categories.
    \item[(ii)] $F$ is part of an adjoint equivalence of braided monoidal categories.
    \item[(iii)] $F$ is an equivalence of ordinary categories.
\end{itemize}
\end{Proposition}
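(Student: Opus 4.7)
The structure mirrors the proof of Proposition \ref{propAlternativeEquivalenceMonoidalCategories}: the implications (ii) $\Rightarrow$ (i) $\Rightarrow$ (iii) hold essentially by definition (an adjoint equivalence is an equivalence, and a braided monoidal equivalence is in particular an equivalence of underlying categories), so the only real content is (iii) $\Rightarrow$ (ii). Starting from a braided monoidal functor $F$ which is an equivalence of ordinary categories, I would first apply Proposition \ref{propAlternativeEquivalenceMonoidalCategories} to the underlying unital monoidal functor to obtain an adjoint equivalence $(F, G, \eta, \varepsilon)$ in the 2-category of unital monoidal categories; in particular $G$ inherits a unital monoidal structure $(\varphi_2^G, \varphi_0^G)$, and $\eta$, $\varepsilon$ are monoidal natural isomorphisms. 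The remaining task is to verify that $G$ itself satisfies the braiding compatibility \eqref{braidedMonoidalFunctor}.

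The plan is to reduce the braidedness of $G$ to the braidedness of $F$ via the monoidality of $\varepsilon$. First I would show that the composite monoidal functor $FG\colon\SV'\longrightarrow\SV'$ is braided. For this, note that $\varepsilon\colon FG\Longrightarrow\Id_{\SV'}$ is a monoidal natural isomorphism onto the trivially braided identity functor. Concretely, for $v',w'\in\SV'$ one combines the naturality of $\varepsilon$ applied to the morphism $\beta^{\SV'}_{v',w'}$, the monoidality condition $\varepsilon_{v'\otimes w'}\circ\varphi^{FG}_{2,v',w'}=\varepsilon_{v'}\otimes\varepsilon_{w'}$, and the naturality of $\beta^{\SV'}$ with respect to $\varepsilon_{v'}\otimes\varepsilon_{w'}$; after cancelling the isomorphism $\varepsilon_{w'\otimes v'}$ on the left, the braiding hexagon for $FG$ falls out.

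Once $FG$ is known to be braided, the braiding square \eqref{braidedMonoidalFunctor} for $G$ at objects $v',w'\in\SV'$ is obtained by applying $F$ to it and pre-composing with the isomorphism $\varphi^F_{2,G(v'),G(w')}$: using the braided monoidality of $F$ on the left-hand route and the definition of $\varphi^{FG}_2$ (see \eqref{composPhi2}) on both routes, the resulting diagram is precisely the braiding square for $FG$ at $(v',w')$, which was just established. Since $F$ is an equivalence of categories, it is faithful, and $\varphi^F_{2,G(v'),G(w')}$ is invertible, so cancelling them yields the braiding condition for $G$.

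The only nontrivial piece is the diagram chase establishing that $FG$ is braided; everything else is bookkeeping with isomorphisms already available from Proposition \ref{propAlternativeEquivalenceMonoidalCategories}. In particular, one does not need to construct $G$ from scratch or verify any further coherence, since the monoidal structure on $G$ and the monoidality of $\eta,\varepsilon$ are inherited from the unital case, and the braiding condition is a single compatibility that is transported along the equivalence.
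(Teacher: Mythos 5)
Your proposal is correct: every step checks out, including the key computation that $FG$ inherits a braided structure from $\Id_{\SV'}$ along the monoidal natural isomorphism $\varepsilon$ (combining \eqref{monNatTrafoPhi2} for $\varepsilon$, naturality of $\varepsilon$ at $\beta^{\SV'}_{v',w'}$, and naturality of $\beta^{\SV'}$ at $\varepsilon_{v'}\otimes'\varepsilon_{w'}$, then cancelling the isomorphism $\varepsilon_{w'\otimes' v'}$), and the transport of the braiding square \eqref{braidedMonoidalFunctor} from $FG$ down to $G$ using \eqref{composPhi2}, the braidedness of $F$, the invertibility of $\varphi^F_2$, and the faithfulness of $F$. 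Note that since there is no extra coherence datum or axiom on natural transformations between braided monoidal functors (cf.\ Remark \ref{remCompBraidedMonoidalFunctors}), nothing further needs to be checked about $\eta$ and $\varepsilon$, as you correctly observe.

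The comparison with the paper is necessarily one-sided: the paper omits the proof entirely, saying only that it "works similar to" Proposition \ref{propAlternativeEquivalenceMonoidalCategories}, whose own proof is a citation of Rivano for the equivalence of (ii) and (iii). So the paper's implicit route is to adapt the external reference directly to the braided setting, whereas you bootstrap off the already-proved monoidal case and isolate the braiding compatibility as the single new verification. Your decomposition is arguably the more informative one: it makes explicit the general (and reusable) fact that a monoidal functor monoidally isomorphic to a braided monoidal functor is itself braided, which is exactly the lemma one needs again when upgrading equivalences in richer settings (module, module monoidal) later in the paper. The only cost is that your argument presupposes the monoidal proposition, but that is available and is clearly what the author intends by "similar".
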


The proof works similar to that of Proposition \ref{propAlternativeEquivalenceMonoidalCategories} and is omitted at this point.

\subsection{Module categories}\label{secBasicsModuleCategories}
As we want to define module monoidal categories later on, we quickly recall some definitions and statements on module categories. For this purpose, let $(\SV,\otimes,\1_\SV,a,l,r)$ be a monoidal category for the rest of this section.

\begin{Definition}[{\cite[Def.\,7.1.2]{EGNO}}]\label{defModuleCategory}
A \emph{(left) $\SV$\==module category} (or \emph{(left) module category over $\SV$}) is a quadruple $(\SM, \rhd, m, l^\rhd)$ where $\SM$ is a category, $\rhd\colon \SV\times\SM\longrightarrow\SM$ is the \emph{action functor} or \emph{module product}, $m\colon (-\otimes -)\rhd - \Longrightarrow -\rhd\,(- \rhd -)$ is the \emph{module associativity (constraint)} natural isomorphism and $l^\rhd\colon \1_\SV\rhd\strut -\Longrightarrow \Id_\SM$ is the \emph{module unit constraint} natural isomorphism. These data have to satisfy the pentagon (or associativity) and triangle (or unitality) axioms, i.e.\ the diagrams
\begin{equation}\tag{$\text{pent.\,}m$}\label{pentM}
\begin{tikzcd}
& ((u\otimes v)\otimes w)\rhd m \arrow[rd, "{m_{u\otimes v,w,m}}"] \arrow[ld, "{a_{u,v,w}\,\rhd\,\id_m}"'] & \\
(u\otimes (v\otimes w))\rhd m \arrow[d, "{m_{u,v\otimes w,m}}"']&& (u\otimes v)\rhd (w\rhd m) \arrow[d, "{m_{u,v,w\rhd m}}"] \\
u\rhd((v\otimes w)\rhd m) \arrow[rr, "{\id_u\,\rhd\,m_{v,w,m}}"'] && u\rhd(v\rhd(w\rhd m))
\end{tikzcd}
\end{equation}
and
\begin{equation}\tag{$\text{tri.\,}m$}\label{triangleM}
\begin{tikzcd}
(v\otimes\1_\SV)\rhd m \arrow[rr, "{m_{v,\1_\SV,m}}"] \arrow[rd, "{r_v\,\rhd\,\id_m}"'] & & v\rhd(\1_\SV\rhd m) \arrow[ld, "{\id_v\,\rhd\,l^\rhd_m}"] \\
& v\rhd m &
\end{tikzcd}
\end{equation}
have to commute for all $u,v,w\in\SV$ and $m\in\SM$.
\end{Definition}
There is an analogous notion of a \emph{right module category}, but we do not need it in this work. Unless otherwise noted, by a \emph{module category} we always mean a left module category over a fixed but otherwise arbitrary monoidal category $\SV$.

\begin{Remark}\label{tri'}
Let $\SM$ be a module category. Then, the triangle
\begin{equation}\tag{tri.'\,$m$}\label{diagTri'}
\begin{tikzcd}
(\1_\SV\otimes v)\rhd m \arrow[rr, "m_{\1_\SV,v,m}"] \arrow[rd, "{l_v\,\rhd\,\id_m}"'] & & \1_\SV\rhd(v\rhd m) \arrow[ld, "l^{\rhd}_{v\rhd m}"] \\
& v\rhd m & 
\end{tikzcd}
\end{equation}
commutes for all $v\in\SV$ and $m\in\SM$. The statement is proved using the same technique as the proof of \cite[Prop.\,2.2.4]{EGNO}.
\end{Remark}

\begin{Definition}[{\cite[Def.\,7.2.1]{EGNO}}]\label{defModuleFunctor}
A \emph{module functor} from a module category $\SM$ to a module category $\SM'$ is a tuple $(F,s)\colon\SM\longrightarrow\SM'$ where $F\colon\SM\longrightarrow\SM'$ is a functor and $s\colon -\rhd'\,F(-)\Longrightarrow F(-\rhd -)$ is a natural isomorphism compatible with the module associator and module unit constraint, i.e.\ the diagrams
\begin{equation}\tag{$s\,\&\,m$}\label{sPentagon}
\begin{tikzcd}
 & (v\otimes w)\rhd' F(m) \arrow[ld, "{m_{v,w,F(m)}'}"'] \arrow[rd, "{s_{v\otimes w,m}}"] & \\
v\rhd'(w\rhd'F(m)) \arrow[d, "{\id_v\,\rhd'\,s_{w,m}}"'] && F((v\otimes w)\rhd m) \arrow[d, "{F(m_{v,w,m})}"] \\
v\rhd'F(w\rhd m) \arrow[rr, "{s_{v,w\rhd m}}"']&& F(v\rhd(w\rhd m))
\end{tikzcd}
\end{equation}
and
\begin{equation}\tag{$s\,\&\,l^\rhd$}\label{sTriangle}
\begin{tikzcd}
\1_\SV\rhd' F(m) \arrow[rr, "{s_{\1_\SV,m}}"] \arrow[rd, "l^{\rhd'}_{F(m)}"'] && F(\1_\SV\rhd m) \arrow[ld, "F(l^\rhd_m)"] \\
& F(m) &
\end{tikzcd}
\end{equation}
commute for all $v,w\in\SV$ and $m\in\SM$.
\end{Definition}

\begin{Remark}\label{remCompModuleFunctors}
Given three module categories $\SM$, $\SM'$, $\SM''$ and two module functors $F\colon\SM\longrightarrow\SM'$ and $G\colon\SM'\longrightarrow\SM''$, their composition $(H,s^H)$ with $H\coloneqq GF$ and
\begin{equation}\tag{compos.\,$s$}\label{composS}
s^{H}_{v,m}\coloneqq\left[ v\rhd''GF(m)\xrightarrow[]{s^G_{v,F(m)}}G(v\rhd' F(m))\xrightarrow[]{G(s^F_{v,m})}GF(v\rhd m)\right]
\end{equation}
for all $v\in\SV$ and $m\in\SM$, is again a module functor (cf.\ \cite[Exc.\,7.2.3]{EGNO}).
\end{Remark}

\begin{Definition}[{\cite[Def.\,7.2.2]{EGNO}}]\label{defModuleNaturalTransformation}
Let $\SM$, $\SM'$ be two module categories and let $F,G\colon\SM\longrightarrow\SM'$ be two module functors between them. A \emph{module natural transformation} $\eta\colon F\Longrightarrow G$ is a natural transformation which is compatible with the module structure, i.e.\ the diagram
\begin{equation}\tag{nat.\,tr.\,\&\,$s$}\label{monNatTrafoS}
\begin{tikzcd}
v\rhd'F(m) \arrow[r, "{s^F_{v,m}}"] \arrow[d, "{\id_v\,\rhd'\,\eta_m}"'] & F(v\rhd m) \arrow[d, "\eta_{v\rhd m}"] \\
v\rhd'G(m) \arrow[r, "{s^G_{v,m}}"'] & G(v\rhd m)
\end{tikzcd}
\end{equation}
commutes for all $v\in\SV$ and $m\in\SM$.
\end{Definition}

\begin{Definition}\label{defEquivalenceModuleCategories}
Let $\SM$, $\SM'$ be two module categories and let $F\colon \SM\longrightarrow\SM'$ be a module functor. We call $F$ an \emph{equivalence of module categories} if there exist a module functor $G\colon \SM'\longrightarrow\SM$ and module natural isomorphisms $\eta\colon \Id_\SM\Longrightarrow GF$ and $\varepsilon\colon FG\Longrightarrow\Id_{\SM'}$.
\end{Definition}

As before for monoidal categories (cf.\ Prop.\,\ref{propAlternativeEquivalenceMonoidalCategories}) and braided monoidal categories (cf.\ Prop.\,\ref{propAlternativeEquivalenceBraidedMonoidalCategories}), we get the following statement on equivalences of module categories.

\begin{Proposition}\label{propAlternativeEquivalenceModuleCategories}
Let $\SM$, $\SM'$ be two module categories and let $F\colon \SM\longrightarrow\SM'$ be a module functor. Then the following statements are equivalent:
\begin{itemize}
    \item[(i)] $F$ is an equivalence of module categories.
    \item[(ii)] $F$ is part of an adjoint equivalence of module categories.
    \item[(iii)] $F$ is an equivalence of ordinary categories.
\end{itemize}
\end{Proposition}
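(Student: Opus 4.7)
The plan is to mirror the proof pattern of Propositions \ref{propAlternativeEquivalenceMonoidalCategories} and \ref{propAlternativeEquivalenceBraidedMonoidalCategories}: the implications (ii)$\Rightarrow$(i) and (i)$\Rightarrow$(iii) are immediate from the definitions, so all the work lies in the promotion step (iii)$\Rightarrow$(ii). Concretely, given a module functor $F\colon \SM\longrightarrow\SM'$ which is an equivalence of underlying categories, I would invoke the classical fact (\cite[Thm.\,IV.4.1]{MacLane}) to extend $F$ to an adjoint equivalence $(F,G,\eta,\varepsilon)$ of ordinary categories, and then upgrade $G$ to a module functor in such a way that $\eta,\varepsilon$ become module natural transformations.

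The module structure on $G$ is forced by requiring $\eta$ (equivalently $\varepsilon$) to be module natural: the only sensible candidate is the mate of $s^F$ under the adjunction,
\[
s^G_{v,m'}\coloneqq\left[v\rhd G(m')\xrightarrow{\eta_{v\rhd G(m')}}GF(v\rhd G(m'))\xrightarrow{G((s^F_{v,G(m')})^{-1})}G(v\rhd' FG(m'))\xrightarrow{G(\id_v\,\rhd'\,\varepsilon_{m'})}G(v\rhd' m')\right],
\]
which is an isomorphism since each constituent is. The bulk of the proof is then to verify three coherence statements: that $s^G$ satisfies \eqref{sPentagon} and \eqref{sTriangle}, and that $\eta$ and $\varepsilon$ satisfy \eqref{monNatTrafoS}. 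Each of these is a routine diagram chase, in which one inserts $\eta$ or $\varepsilon$ where needed, uses naturality of $s^F$, the zig\-zag relations \eqref{adjunctionZigzag}, and the already\-known axioms \eqref{sPentagon}, \eqref{sTriangle} for $F$, and finally re-collapses the result. Alternatively, one may appeal to the general doctrinal adjunction principle: the 2\-category of module categories, module functors and module natural transformations admits a forgetful 2\-functor to $\mathrm{Cat}$ which creates adjoint equivalences, because the compatibility data $s^G$ is defined as a mate and mates of isomorphism 2\-cells automatically satisfy the required axioms.

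The main obstacle is bookkeeping rather than any conceptual difficulty: one must be careful that the mate construction is performed with respect to $\rhd$ in the first variable (which is fixed, namely the identity on $\SV$, so no mate is needed there), and only in the second variable where $F\dashv G$ operates. Once that is set up, the diagram chases proceed exactly as in the monoidal case treated in \cite[Prop.\,I.4.4.2]{Rivano}, and I would either spell out the verification for \eqref{sPentagon} in detail and leave the shorter \eqref{sTriangle} and \eqref{monNatTrafoS} to the reader, or simply cite the module\-categorical analogue of that result, noting that the argument is formally identical to the proof of Proposition \ref{propAlternativeEquivalenceMonoidalCategories} with $\otimes$ replaced by $\rhd$ and the unit coherence handled by $l^\rhd$ instead of $l,r$.
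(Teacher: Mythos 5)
Your proposal is correct and follows essentially the same route as the paper: the paper omits this proof, remarking only that it works as for Proposition \ref{propAlternativeEquivalenceMonoidalCategories} (which cites \cite[Prop.\,I.4.4.2]{Rivano}), and in the proof of Proposition \ref{propAlternativeEquivalenceNonUnitalModuleMonoidalCategories} it writes down exactly your mate, in the equivalent form $s_{v,m}^G= F^{-1}\bigl(\varepsilon^{-1}_{v\rhd'm}\circ (\id_v\rhd' \varepsilon_m)\circ (s^F_{v,G(m)})^{-1}\bigr)$, and verifies the coherences by the same diagram chases you describe.
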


The proof works similar to that of Proposition \ref{propAlternativeEquivalenceMonoidalCategories} and is omitted at this point.

\section{Module monoidal categories}\label{secDefinitions}
In this section, we define the notion of a module monoidal category $\SM$ over a braided monoidal category $\SV$, its functors and natural transformations. The intuition is to categorify the notion of an associative algebra $A$ over a commutative ring $R$. In this case, either $A$, $R$ or both might or might not be unital. The application we are interested in (constructing module monoidal categories from orbifold data \cite{CRS20, Mulevicius22} and vice versa) is comparable to requiring the base ring $R$ to be unital while $A$ does not have to be unital \cite{HR}. Therefore, we develop the notion of non\--unital module monoidal categories over braided unital monoidal categories. Further, there is a choice whether the module action is from the left or the right side. In this work, we concentrate on left module actions, but one also could have considered right module monoidal categories.

In Section \ref{secDefinitionsNonUnital}, we define non\--unital module monoidal categories and in Section \ref{secDefinitionsUnital} we investigate unital module monoidal categories. Remarkably, the only difference is that the monoidal part has to be unital and there do not arise further coherence conditions with the action from the module part.

As in Section \ref{secPreliminaries}, ``monoidal'' means ``unital monoidal'' and ``non\--unital monoidal'' will always be explicitly written out. For the whole section, let $\SV$ be a braided monoidal category.

\subsection*{Conventions on commutative diagrams}
Recall that in definitions, commutative diagrams are labelled by abbreviations which are collected in Appendix \ref{secAppendixGlossary}.

While proving diagrams to be commutative we often drop the objects from natural transformations since they are implied by the source and target of the corresponding arrow, e.g.\ an arrow $(u\otimes v)\otimes w\xrightarrow[]{a_{u,v,w}} u\otimes (v\otimes w)$ is denoted by $(u\otimes v)\otimes w\overset{a}{\longrightarrow} u\otimes (v\otimes w)$. Further, ``naturality'' and ``functoriality'' are consequently abbreviated as ``nat.'' and ``funct.''.

\subsection{Non--unital module monoidal categories}\label{secDefinitionsNonUnital}
\begin{Definition}\label{defNonUnitalModuleMonoidalCategory}
Let $\SV$ be a braided monoidal category. A \emph{(left) non\--unital module monoidal category (over $\SV$)} is a tuple $(\SM,\otimes, a, \rhd, m, l^\rhd, \alp1, \alp2)$ where
\begin{itemize}
    \item $(\SM,\otimes, a)$ is a non\--unital monoidal category,
    \item $(\SM,\rhd,m,l^\rhd)$ is a $\SV$\==module category,
    \item $\alp1\colon (-\rhd-)\otimes -\Longrightarrow -\rhd(-\otimes-)$ is a natural isomorphism with components
    \[\alp1_{v,m,n}\colon (v\rhd m)\otimes n\longrightarrow v\rhd (m\otimes n)\]
    for all $v\in\SV$ and $m,n\in\SM$ and
    \item $\alp2\colon -\otimes\ (-\rhd-)\Longrightarrow -\rhd(-\otimes-)$ is a natural isomorphism with components
    \[\alp2_{v,m,n}\colon m\otimes (v\rhd n)\longrightarrow v\rhd (m\otimes n)\]
    for all $v\in\SV$ and $m,n\in\SM$.
\end{itemize}
Besides the usual axioms for non\--unital monoidal categories and $\SV$\==module categories, these data have to make, for all $v,w\in\SV$ and $l,m,n\in\SM$, the diagrams

\begin{equation}\tag{$\alp1\,\&\,a$}\label{a1a}
\begin{tikzcd}
& ((v\rhd l)\otimes m)\otimes n \arrow[rd, "{a_{v\rhd l, m, n}}"] \arrow[ld, "{\alp1_{v,l,m}\,\otimes\,\id_n}"'] & \\
(v\rhd (l\otimes m))\otimes n \arrow[d, "{\alp1_{v,l\otimes m,n}}"']&& (v\rhd l)\otimes (m\otimes n) \arrow[d, "{\alp1_{v,l,m\otimes n}}"] \\
v\rhd ((l\otimes m)\otimes n) \arrow[rr, "{\id_v\,\rhd\,a_{l,m,n}}"'] && v\rhd (l\otimes (m\otimes n)),
\end{tikzcd}
\end{equation}

\begin{equation}\tag{$\alp1\,\&\,m$}\label{a1m}
\begin{tikzcd}
 & ((v\otimes w)\rhd m)\otimes n \arrow[ld, "{m_{v,w,m}\,\otimes\,\id_n}"'] \arrow[rd, "{\alp1_{v\otimes w,m,n}}"] & \\
(v\rhd (w\rhd m))\otimes n \arrow[d, "{\alp1_{v,w\rhd m, n}}"']& & (v\otimes w)\rhd (m\otimes n) \arrow[d, "{m_{v,w,m\otimes n}}"] \\
v\rhd((w\rhd m)\otimes n) \arrow[rr, "{\id_v\,\rhd\,\alp1_{w,m,n}}"'] & & v\rhd(w\rhd(m\otimes n)) 
\end{tikzcd}
\end{equation}

\begin{equation}\tag{$\alp2\,\&\,a$}\label{a2a}
\begin{tikzcd}
& (l\otimes m)\otimes (v\rhd n) \arrow[rd, "{\alp2_{v,l\otimes m, n}}"] \arrow[ld, "{a_{l,m,v\rhd n}}"'] &\\
l\otimes (m\otimes (v\rhd n)) \arrow[d, "{\id_l\,\otimes\,\alp2_{v,m,n}}"'] && v\rhd((l\otimes m)\otimes n) \arrow[d, "{\id_v\,\rhd\,a_{l,m,n}}"] \\
l\otimes(v\rhd(m\otimes n)) \arrow[rr, "{\alp2_{v,l,m\otimes n}}"']&& v\rhd (l\otimes (m\otimes n)),
\end{tikzcd}
\end{equation}

\begin{equation}\tag{$\alp2\,\&\,m$}\label{a2m}
\begin{tikzcd}
& m\otimes ((v\otimes w)\rhd n) \arrow[ld, "{\id_m\,\otimes\,m_{v,w,n}}"'] \arrow[rd, "{\alp2_{v\otimes w,m,n}}"] & \\
m\otimes (v\rhd (w\rhd n)) \arrow[d, "{\alp2_{v,m,w\rhd n}}"']& & (v\otimes w)\rhd (m\otimes n) \arrow[d, "{m_{v,w,m\otimes n}}"] \\
v\rhd(m\otimes(w\rhd n)) \arrow[rr, "{\id_v\,\rhd\,\alp2_{w,m,n}}"'] & & v\rhd(w\rhd(m\otimes n)) 
\end{tikzcd}
\end{equation}

\begin{equation}\tag{$\alp1\,\&\,\alp2$}\label{a1a2}
\begin{tikzcd}[column sep = huge]
(l\otimes (v\rhd m))\otimes n \arrow[r, "{\alp2_{v,l,m}\,\otimes\,\id_n}"] \arrow[d, "{a_{l,v\rhd m,n}}"'] & (v\rhd (l\otimes m))\otimes n \arrow[r, "{\alp1_{v,l\otimes m,n}}"] & v\rhd((l\otimes m)\otimes n) \arrow[d, "{\id_v\,\rhd\,a_{l,m,n}}"] \\
l\otimes((v\rhd m)\otimes n) \arrow[r, "{\id_l\,\otimes\,\alp1_{v,m,n}}"'] & l\otimes (v\rhd(m\otimes n)) \arrow[r, "{\alp2_{v,l,m\otimes n}}"'] & v\rhd(l\otimes(m\otimes n),
\end{tikzcd}
\end{equation}

\begin{equation}\tag{$\beta\,\&\,\rhd$}\label{brac}
\begin{tikzcd}
 & (w\rhd m)\otimes (v\rhd n) \arrow[ld, "{\alp2_{v,w\rhd m,n}}"'] \arrow[rd, "{\alp1_{w,m,v\rhd n}}"] &\\
v\rhd((w\rhd m)\otimes n) \arrow[d, "{\id_v\,\rhd\,\alp1_{w,m,n}}"'] & & w\rhd(m\otimes(v\rhd n)) \arrow[d, "{\id_w\,\rhd\,\alp2_{v,m,n}}"] \\
v\rhd(w\rhd(m\otimes n)) \arrow[d, "{m^{-1}_{v,w,m\otimes n}}"']& & w\rhd(v\rhd(m\otimes n)) \arrow[d, "{m^{-1}_{w,v,m\otimes n}}"] \\
(v\otimes w)\rhd (m\otimes n) \arrow[rr, "{\beta^\SV_{v,w}\,\rhd\,\id_{m\otimes n}}"'] & & (w\otimes v)\rhd(m\otimes n)
\end{tikzcd}
\end{equation}
commute.
\end{Definition}
Unless otherwise noted, any non\--unital module monoidal category is always considered to be over a fixed but otherwise arbitrary braided monoidal category $\SV$. To clarify notation, the intuition behind $\alp1$ is ``pulling the action of $v$ out of the first component of $m\otimes n$''. Similarly, $\alp2$ can be read as ``pulling the action of $v$ out of the second component of $m\otimes n$''.

One automatically gets that the natural isomorphisms $\alp1$ and $\alp2$ behave well with the module unit constraint $l^\rhd$. This is shown in the following
\begin{Proposition}\label{propFurtherAxiomsNonUnitalModuleMonoidalCategory}
Let $\SM$ be a non\--unital module monoidal category. Then, for all $m,n\in\SM$, the diagrams
\begin{equation}\tag{$\alp1\,\&\,l^\rhd$}\label{a1lm}
\begin{tikzcd}
(\1_\SV\rhd m)\otimes n \arrow[rr, "{\alp1_{\1_\SV,m,n}}"] \arrow[rd, "{l^\rhd_m\,\otimes\,\id_n}"'] && \1_\SV\rhd(m\otimes n) \arrow[ld, "l^\rhd_{m\otimes n}"] \\
& m\otimes n &
\end{tikzcd}
\end{equation}
and
\begin{equation}\tag{$\alp2\,\&\,l^\rhd$}\label{a2lm}
\begin{tikzcd}
m \otimes(\1_\SV\rhd n) \arrow[rr, "{\alp2_{\1_\SV,m,n}}"] \arrow[rd, "{\id_m\,\otimes\,l^\rhd_n}"'] && \1_\SV\rhd(m\otimes n) \arrow[ld, "l^\rhd_{m\otimes n}"] \\
& m\otimes n &
\end{tikzcd}
\end{equation}
commute.
\end{Proposition}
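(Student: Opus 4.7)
The proof proceeds by mimicking the standard coherence argument (e.g.\ the one used to derive \eqref{aL} and \eqref{aR} in \cite[Prop.\,2.2.4]{EGNO}) that extracts the ``missing'' triangle identities from one pentagon and one triangle. I would prove \eqref{a1lm} in detail; \eqref{a2lm} then follows by the completely analogous argument with \eqref{a2m} and $\alp2$ in place of \eqref{a1m} and $\alp1$.

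\textbf{Setup.} Specialize \eqref{a1m} to $w=\1_\SV$, obtaining a commuting hexagon with top vertex $((v\otimes\1_\SV)\rhd m)\otimes n$ and bottom-right vertex $v\rhd(\1_\SV\rhd(m\otimes n))$. Post-compose both paths with the isomorphism $\id_v\rhd l^\rhd_{m\otimes n}$. On the right edge the composition $(\id_v\rhd l^\rhd_{m\otimes n})\circ m_{v,\1_\SV,m\otimes n}$ simplifies to $r_v\rhd\id_{m\otimes n}$ by the module triangle axiom \eqref{triangleM}; naturality of $\alp1$ in its first argument (along $r_v\colon v\otimes\1_\SV\to v$) then turns the right path into $\alp1_{v,m,n}\circ((r_v\rhd\id_m)\otimes\id_n)$, and a second application of \eqref{triangleM} rewrites $r_v\rhd\id_m$ as $(\id_v\rhd l^\rhd_m)\circ m_{v,\1_\SV,m}$. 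On the left edge, pulling $\id_v\rhd l^\rhd_{m\otimes n}$ through the component $\id_v\rhd\alp1_{\1_\SV,m,n}$ yields the composite $\id_v\rhd(l^\rhd_{m\otimes n}\circ\alp1_{\1_\SV,m,n})$ applied after $\alp1_{v,\1_\SV\rhd m,n}\circ(m_{v,\1_\SV,m}\otimes\id_n)$.

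\textbf{Cancellation.} Both paths now carry a common invertible prefix $m_{v,\1_\SV,m}\otimes\id_n$; cancelling it, and then using naturality of $\alp1$ in its middle argument to commute $\alp1_{v,\1_\SV\rhd m,n}$ past $(\id_v\rhd l^\rhd_m)\otimes\id_n$, produces the equation
\[
\bigl(\id_v\rhd(l^\rhd_{m\otimes n}\circ\alp1_{\1_\SV,m,n})\bigr)\circ\alp1_{v,\1_\SV\rhd m,n} \;=\; \bigl(\id_v\rhd(l^\rhd_m\otimes\id_n)\bigr)\circ\alp1_{v,\1_\SV\rhd m,n}.
\]
Since $\alp1_{v,\1_\SV\rhd m,n}$ is invertible, we obtain
\[
\id_v\rhd(l^\rhd_{m\otimes n}\circ\alp1_{\1_\SV,m,n}) \;=\; \id_v\rhd(l^\rhd_m\otimes\id_n).
\]

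\textbf{Removing the outer action.} Finally, specialize $v=\1_\SV$ and post-compose both sides with $l^\rhd_{m\otimes n}$. By naturality of $l^\rhd$ applied to the two morphisms $(\1_\SV\rhd m)\otimes n \to m\otimes n$, each side becomes $f\circ l^\rhd_{(\1_\SV\rhd m)\otimes n}$ for $f=l^\rhd_{m\otimes n}\circ\alp1_{\1_\SV,m,n}$ and $f=l^\rhd_m\otimes\id_n$ respectively; cancelling the isomorphism $l^\rhd_{(\1_\SV\rhd m)\otimes n}$ gives \eqref{a1lm}.

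The main obstacle is purely bookkeeping: there is no new idea beyond carefully matching up the naturality squares of $\alp1$ and $l^\rhd$ with the two triangle instances \eqref{triangleM}, and verifying that every intermediate morphism being cancelled is genuinely invertible (which is immediate, since $a$, $m$, $l^\rhd$, $\alp1$, $\alp2$ are natural isomorphisms by definition). The symmetric proof of \eqref{a2lm} is identical in structure, starting from \eqref{a2m} with $v=\1_\SV$ and exchanging the roles of the two tensor factors.
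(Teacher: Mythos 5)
Your proof is correct and is precisely the technique the paper invokes: the paper's own proof only cites the method of \cite[Prop.\,2.2.4]{EGNO} and states that \eqref{a1m} yields \eqref{a1lm} and \eqref{a2m} yields \eqref{a2lm}, whereas you supply the actual chain of specialization, \eqref{triangleM}, naturality, and cancellation of isomorphisms, ending with the $v=\1_\SV$ trick via naturality of $l^\rhd$. One small slip in your closing remark: to obtain \eqref{a2lm} for arbitrary $n$ you must specialize \eqref{a2m} at $w=\1_\SV$ (mirroring your treatment of \eqref{a1m}), not at $v=\1_\SV$, since the latter only yields the triangle for objects of the form $w\rhd n$ in the second slot; with that correction the symmetric argument goes through verbatim.
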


\begin{proof}
The statements are proved using the same technique as the proof of \cite[Prop.\,2.2.4]{EGNO}. With the diagrams from Definition \ref{defNonUnitalModuleMonoidalCategory}, one has that
\begin{itemize}
    \item \eqref{a1m} yields \eqref{a1lm} and
    \item \eqref{a2m} yields \eqref{a2lm}.\qedhere
\end{itemize}
\end{proof}

\begin{Definition}\label{defNonUnitalModuleMonoidalFunctor}
A \emph{non\--unital module monoidal functor} from a non\--unital module monoidal category $\SM$ to a non\--unital module monoidal category $\SM'$ is a tuple $(F,\varphi_2,s)\colon\SM\longrightarrow\SM'$ such that
\begin{itemize}
    \item $(F,\varphi_2)\colon (\SM,\otimes,a)\longrightarrow(\SM',\otimes',a')$ is a non\--unital monoidal functor and
    \item $(F,s)\colon (\SM,\rhd,m,l^\rhd)\longrightarrow(\SM',\rhd',m',l^{\rhd'})$ is a module functor,
\end{itemize}
compatible with the morphisms $\alp1$, $\alpd1$, $\alp2$ and $\alpd2$, i.e.\ the diagrams
\begin{equation}\tag{comp.\,$\alp1$}\label{diagModuleMonoidalFunctorCompAlpha1}
\begin{tikzcd}[column sep = huge]
(v\rhd'F(m))\otimes'F(n) \arrow[r, "{\alpd1_{v,F(m),F(n)}}"] \arrow[d, "{s_{v,m}\,\otimes'\,\id_{F(n)}}"'] & v\rhd'(F(m)\otimes'F(n)) \arrow[r, "{\id_v\,\rhd'\,\varphi_{2,m,n}}"] & v\rhd'F(m\otimes n) \arrow[d, "{s_{v,m\otimes n}}"] \\
F(v\rhd m)\otimes'F(n) \arrow[r, "{\varphi_{2,v\rhd m, n}}"']& F((v\rhd m)\otimes n) \arrow[r, "{F(\alp1_{v,m,n})}"']& F(v\rhd(m\otimes n)) 
\end{tikzcd}
\end{equation}
and
\begin{equation}\tag{comp.\,$\alp2$}\label{diagModuleMonoidalFunctorCompAlpha2}
\begin{tikzcd}[column sep = huge]
F(m)\otimes' (v\rhd'F(n)) \arrow[r, "{\alpd2_{v,F(m),F(n)}}"] \arrow[d, "{\id_{F(m)}\,\otimes'\,s_{v,n}}"'] & v\rhd'(F(m)\otimes'F(n)) \arrow[r, "{\id_v\,\rhd'\,\varphi_{2,m,n}}"] & v\rhd'F(m\otimes n) \arrow[d, "{s_{v,m\otimes n}}"] \\
F(m)\otimes'F(v\rhd n) \arrow[r, "{\varphi_{2,m,v\rhd n}}"']& F(m\otimes(v\rhd n)) \arrow[r, "{F(\alp2_{v,m,n})}"'] & F(v\rhd(m\otimes n)) 
\end{tikzcd}
\end{equation}
commute for all $v\in\SV$ and $m,n\in\SM$.
\end{Definition}

\begin{Proposition}\label{propCompNonUnitalModuleMonoidalFunctors}
Let $\SM$, $\SM'$, $\SM''$ be non\--unital module monoidal categories and $F\colon \SM\longrightarrow\SM'$, $G\colon \SM'\longrightarrow \SM''$ be non\--unital module monoidal functors. Their composition $(H,\varphi_2^H,s^H)\colon\SM\longrightarrow\SM''$ with $H\coloneqq GF$, $\varphi_2^H$ as in \eqref{composPhi2} and $s^H$ as in \eqref{composS} is also a non\--unital module monoidal functor.
\end{Proposition}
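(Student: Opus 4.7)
The plan is to reduce the statement to three ingredients that are already available. First, by the non-unital version of Remark~\ref{remCompMonoidalFunctors}, the pair $(H,\varphi_2^H)$ is a non-unital monoidal functor from $(\SM,\otimes,a)$ to $(\SM'',\otimes'',a'')$. Second, by Remark~\ref{remCompModuleFunctors}, the pair $(H,s^H)$ is a module functor from the underlying $\SV$-module category of $\SM$ to that of $\SM''$. It therefore suffices to verify the two additional compatibility diagrams from Definition~\ref{defNonUnitalModuleMonoidalFunctor}, namely \eqref{diagModuleMonoidalFunctorCompAlpha1} and \eqref{diagModuleMonoidalFunctorCompAlpha2}, for the triple $(H,\varphi_2^H,s^H)$.

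For \eqref{diagModuleMonoidalFunctorCompAlpha1} I will expand the outer square by substituting the defining formulas \eqref{composPhi2} and \eqref{composS}. This introduces intermediate vertices such as $v\rhd'' G(F(m)\otimes'F(n))$, $G(v\rhd'F(m))\otimes''GF(n)$, $G((v\rhd'F(m))\otimes'F(n))$ and $G(v\rhd'(F(m)\otimes'F(n)))$, through which both composite paths can be routed. The resulting region then decomposes into four commuting subdiagrams: the instance of \eqref{diagModuleMonoidalFunctorCompAlpha1} for $G$ applied to the objects $F(m),F(n)\in\SM'$; the $G$\-image of the instance of \eqref{diagModuleMonoidalFunctorCompAlpha1} for $F$, which commutes because $G$ is a functor; a naturality square for $s^G$ with respect to the $\SM'$\-morphism $\varphi_{2,m,n}^F$; and a naturality square for $\varphi_2^G$ with respect to the $\SM'$\-morphism $s^F_{v,m}$ in the first tensor slot. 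Pasting these four regions yields the required commutativity.

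The argument for \eqref{diagModuleMonoidalFunctorCompAlpha2} is strictly symmetric: the same substitution is performed, and the tiling uses the instance of \eqref{diagModuleMonoidalFunctorCompAlpha2} for $G$, its $G$\-image for $F$, a naturality square for $s^G$ against $\varphi_{2,m,n}^F$, and a naturality square for $\varphi_2^G$ with respect to $s^F_{v,n}$ in the second tensor slot.

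No conceptual difficulty is expected; the only real obstacle is typesetting the large diagrams legibly. The contributions from $F$ and $G$ to each axiom couple only through the naturality of $\varphi_2^G$ and $s^G$, so once the diagram is drawn, the remaining verification is a mechanical paste of commuting subregions and no additional coherence input from $\SM$, $\SM'$ or $\SM''$ is required.
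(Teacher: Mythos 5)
Your proposal is correct and follows essentially the same route as the paper's proof: after invoking Remarks \ref{remCompMonoidalFunctors} and \ref{remCompModuleFunctors} for the monoidal and module parts, the paper also verifies \eqref{diagModuleMonoidalFunctorCompAlpha1} by pasting exactly the four regions you name (the axiom for $G$ at $F(m),F(n)$, the $G$\==image of the axiom for $F$, naturality of $s^G$ against $\varphi_2^F$, and naturality of $\varphi_2^G$ against $s^F$), and dispatches \eqref{diagModuleMonoidalFunctorCompAlpha2} as the symmetric analogue. Nothing further is needed.
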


\begin{proof}
We have to check that the tuple $(H,\varphi_2^H,s^H)$ satisfies the conditions from Definition \ref{defNonUnitalModuleMonoidalFunctor}. By Remark \ref{remCompMonoidalFunctors} $(H,\varphi_2^H)$ is a non\--unital monoidal functor and by Remark \ref{remCompModuleFunctors} $(H,s^H)$ is a module functor. Hence, we are left to show that the diagrams \eqref{diagModuleMonoidalFunctorCompAlpha1} and \eqref{diagModuleMonoidalFunctorCompAlpha2} commute. Plugging in \eqref{composS} and \eqref{composPhi2} for $s^H$ and $\varphi_2^H$, \eqref{diagModuleMonoidalFunctorCompAlpha1} becomes 
\begin{equation*}
\begin{tikzcd}
(v\rhd''GF(m))\otimes''GF(n) \arrow[r, "\alpdd1"] \arrow[rddd, shorten >= 2cm, "\com{\eqref{diagModuleMonoidalFunctorCompAlpha1}\text{ for } G}" description, color = white]\arrow[ddd, "{s^G\,\otimes''\,\id}"'] & v\rhd''(GF(m)\otimes''GF(n)) \arrow[d, "{\id\,\rhd''\,\varphi_2^G}"] &\\
 & v\rhd''G(F(m)\otimes'F(n)) \arrow[rd, shorten >= 1cm, "\com{\nat s^G}" description, color = white]\arrow[d, "s^G"] \arrow[r, "{\id\,\rhd''\,G(\varphi_2^F)}"]& v\rhd''GF(m\otimes n) \arrow[d, "s^G"] \\
 & G(v\rhd'(F(m)\otimes'F(n))) \arrow[rddd, shorten <= 2cm, "\com{\eqref{diagModuleMonoidalFunctorCompAlpha1}\text{ for } F}" description, color = white]\arrow[r, "{G(\id\,\rhd'\,\varphi_2^F)}"'] & G(v\rhd'F(m\otimes n)) \arrow[ddd, "G(s^F)"]\\
G(v\rhd'F(m))\otimes''GF(n) \arrow[rd, "\com{\nat \varphi_2^G}" description, color = white]\arrow[d, "{G(s^F)\,\otimes''\,\id}"'] \arrow[r, "\varphi_2^G"] & G((v\rhd'F(m))\otimes'F(n)) \arrow[d, "{G(s^F\,\otimes'\,\id)}"'] \arrow[u, "G(\alpd1)"]&\\
GF(v\rhd m)\otimes''GF(n) \arrow[r, "\varphi_2^G"']& G(F(v\rhd m)\otimes'F(n)) \arrow[d, "G(\varphi_2^F)"'] &\\
 & GF((v\rhd m)\otimes n) \arrow[r, "GF(\alp1)"']& GF(v\rhd(m\otimes n)) 
\end{tikzcd}
\end{equation*}
which commutes, for all $v\in\SV$ and $m,n\in\SM$, due to the reasons indicated in boxes. Additionally, \eqref{diagModuleMonoidalFunctorCompAlpha2} reduces to a very similar diagram.
\end{proof}

\begin{Definition}\label{defNonUnitalModuleMonoidalNaturalTransformation}
Let $\SM$, $\SM'$ be two non\--unital module monoidal categories and let $F,G\colon\SM\longrightarrow\SM'$ be two non\--unital module monoidal functors between them. A \emph{non\--unital module monoidal natural transformation} $\eta\colon F\Longrightarrow G$ is a natural transformation such that
\begin{itemize}
    \item $\eta$ is a non\--unital monoidal natural transformation and
    \item $\eta$ is a module natural transformation.
\end{itemize}
\end{Definition}

\begin{Definition}\label{defEquivalenceNonUnitalModuleMonoidalCategories}
Let $\SM$, $\SM'$ be two non\--unital module monoidal categories and let $F\colon \SM\longrightarrow\SM'$ be a non\--unital module monoidal functor. We call $F$ an \emph{equivalence of non\--unital module monoidal categories} if there exist a non\--unital module monoidal functor $G\colon \SM'\longrightarrow\SM$ and non\--unital module monoidal natural isomorphisms $\eta\colon \Id_\SM\Longrightarrow GF$ and $\varepsilon\colon F G\Longrightarrow\Id_{\SM'}$.
\end{Definition}

Like for monoidal, braided monoidal and module functors (cf.\ Prop.\,\ref{propAlternativeEquivalenceMonoidalCategories}, Prop.\,\ref{propAlternativeEquivalenceBraidedMonoidalCategories} and Prop.\,\ref{propAlternativeEquivalenceModuleCategories}) we get a statement, which extremely eases the verification that a given non\--unital module monoidal functor is an equivalence of non\--unital module monoidal categories.

\begin{Proposition}\label{propAlternativeEquivalenceNonUnitalModuleMonoidalCategories}
Let $\SM$, $\SM'$ be two non\--unital module monoidal categories and let $F\colon \SM\longrightarrow\SM'$ be a non\--unital module monoidal functor. Then the following statements are equivalent:
\begin{itemize}
    \item[(i)] $F$ is an equivalence of non\--unital module monoidal categories.
    \item[(ii)] $F$ is part of an adjoint equivalence of non\--unital module monoidal categories.
    \item[(iii)] $F$ is an equivalence of ordinary categories.
\end{itemize}
\end{Proposition}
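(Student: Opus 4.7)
The plan is to mirror the structure of the proofs of Propositions \ref{propAlternativeEquivalenceMonoidalCategories}, \ref{propAlternativeEquivalenceBraidedMonoidalCategories} and \ref{propAlternativeEquivalenceModuleCategories}. The implication (ii)~$\Rightarrow$~(i) is immediate from Definition \ref{defEquivalenceNonUnitalModuleMonoidalCategories} since an adjoint equivalence is in particular an equivalence, and (i)~$\Rightarrow$~(iii) holds because a non\--unital module monoidal equivalence is in particular an ordinary equivalence (forgetting the compatibility data on the functor, the unit and the counit). So the content lies entirely in (iii)~$\Rightarrow$~(ii).

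Suppose $F$ is an equivalence of ordinary categories. First I would fix any adjoint equivalence of ordinary categories $(F,G,\eta,\varepsilon)$, which exists by \cite[Thm.\,IV.4.1]{MacLane}. Applying Prop.\,\ref{propAlternativeEquivalenceMonoidalCategories} to the underlying non\--unital monoidal functor $(F,\varphi_2^F)$ produces a unique non\--unital monoidal structure $\varphi_2^G$ on $G$ such that $(F,G,\eta,\varepsilon)$ becomes an adjoint equivalence of non\--unital monoidal categories; explicitly, $\varphi_2^G$ is the mate of $\varphi_2^F$ under the adjunction. Applying Prop.\,\ref{propAlternativeEquivalenceModuleCategories} to $(F,s^F)$ gives, on the same $G$ with the same $\eta,\varepsilon$, a unique module structure $s^G$ (again as the mate of $s^F$). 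By Definition \ref{defNonUnitalModuleMonoidalNaturalTransformation}, $\eta$ and $\varepsilon$ are then automatically non\--unital module monoidal natural isomorphisms, since they are both monoidal and module natural transformations. Hence only one thing remains: to check that $(G,\varphi_2^G,s^G)$ satisfies the compatibility diagrams \eqref{diagModuleMonoidalFunctorCompAlpha1} and \eqref{diagModuleMonoidalFunctorCompAlpha2}.

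This is the main obstacle, and I would handle it by mate calculus. Substituting the mate descriptions of $\varphi_2^G$ and $s^G$ into the diagram \eqref{diagModuleMonoidalFunctorCompAlpha1} for $G$, one can insert composites of the form $\varepsilon_{F(-)}\circ F(\eta_{-})=\id$ from \eqref{adjunctionZigzag} so as to exhibit an inner sub\--diagram that is the image under $G$ of the diagram \eqref{diagModuleMonoidalFunctorCompAlpha1} for $F$, suitably pre\-- and post\--composed with components of $\eta$ and $\varepsilon$. Once that inner piece is recognised as commutative (because it commutes for the original functor $F$), the remaining outer cells commute by naturality of $\eta$, $\varepsilon$, $\alp1$, $\alpd1$ together with the zig\--zag identities \eqref{adjunctionZigzag}. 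The identical argument, with $\alp2$ and $\alpd2$ in place of $\alp1$ and $\alpd1$, yields \eqref{diagModuleMonoidalFunctorCompAlpha2}. The bookkeeping is lengthy but entirely formal: all the ingredients are dictated by the adjoint equivalence together with the diagrams already known for $F$, and no new coherence relations have to be imposed.
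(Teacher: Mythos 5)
Your proposal is correct and follows essentially the same route as the paper: reduce to (iii)~$\Rightarrow$~(ii), fix an adjoint equivalence $(F,G,\eta,\varepsilon)$, transfer $\varphi_2^F$ and $s^F$ to $G$ along the adjunction (the paper writes these as $F^{-1}$ of explicit composites using full faithfulness, which for an adjoint equivalence coincides with your mate description), invoke the earlier propositions for everything except the $\alp{1}$\==/\,$\alp{2}$\==compatibility, and verify \eqref{diagModuleMonoidalFunctorCompAlpha1} and \eqref{diagModuleMonoidalFunctorCompAlpha2} by a diagram chase whose only nontrivial inner cell is the corresponding diagram for $F$, the rest commuting by naturality and the zig\--zag identities. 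This matches the paper's proof in both structure and substance.
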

\begin{proof}
Obviously, (ii) implies (i) and (i) implies (iii).

To see that (iii) implies (ii) we suppose $F\colon\SM\longrightarrow \SM'$ is a non\--unital module monoidal functor which is further an equivalence of categories.

By \cite[Thm.\,IV.4.1]{MacLane}, we can assume that $F$ is part of an adjoint equivalence $(F,G,\eta,\varepsilon)$ and that $F$ is fully faithful. Thus, for all $v\in\SV$ and $m,n\in\SM'$, we can define
\begin{align*}
\varphi_{2,m,n}^G&\coloneqq F^{-1}\left(\varepsilon_{m\otimes' n}^{-1}\circ (\varepsilon_m\otimes'\varepsilon_n)\circ (\varphi_{2,G(m),G(n)}^F)^{-1}\right),\\
s_{v,m}^G&\coloneqq F^{-1}\left(\varepsilon^{-1}_{v\rhd'm}\circ (\id_v\rhd' \varepsilon_m)\circ (s^F_{v,G(m)})^{-1}\right).
\end{align*}\goodbreak

With these definitions the proofs of Propositions \ref{propAlternativeEquivalenceMonoidalCategories} and \ref{propAlternativeEquivalenceModuleCategories} yield that
\begin{itemize}
    \item $(G,\varphi_2^G)$ is a non\--unital monoidal functor,
    \item $(G,s^G)$ is a module functor,
    \item $\eta$ and $\varepsilon$ are monoidal and module natural isomorphisms, i.e.\ \--- given that $G$ is indeed a non\--unital module monoidal functor \--- they are module monoidal natural isomorphisms and
    \item $(F,G,\eta,\varepsilon)$ is an adjunction.
\end{itemize}

We are left to show that $G$ is indeed a non\--unital module monoidal functor, i.e.\ diagrams \eqref{diagModuleMonoidalFunctorCompAlpha1} and \eqref{diagModuleMonoidalFunctorCompAlpha2} commute, where \eqref{diagModuleMonoidalFunctorCompAlpha1} is satisfied by applying $F^{-1}$ to the border of the diagram
\begin{equation*}
{\begin{tikzcd}
F((v\rhd G(m))\otimes G(n))\arrow[ddddddd, "\com{\nat \varphi_2^F}" description, color = blue, pos = 0.3, shift right = 10, shorten >=1cm, shorten <=8cm]\arrow[rddd, "\com{\eqref{diagModuleMonoidalFunctorCompAlpha1}\text{ for } F}" description, color = white, shorten >= 3cm] \arrow[r, "F(\alp1)"] \arrow[ddddddd, "{F(s^G\,\otimes\,\id)}", bend right=15, shift right=18]& F(v\rhd(G(m)\otimes G(n))) \arrow[rd, "\com{\nat s^F}" description, color = white]\arrow[r, "{F(\id\,\rhd\,\varphi_2^G)}"] & F(v\rhd G(m\otimes'n)) \arrow[ddddddd, "F(s^G)"', bend left=15, shift left=13]\arrow[ddddddd, "\com{\dfn s^G}" description, color = blue, shift left = 10, shorten >= 8cm, shorten <= 1cm, pos = 0.3] \\
& v\rhd' F(G(m)\otimes G(n)) \arrow[u, "s^F"] \arrow[r, "{\id\,\rhd'\,F(\varphi_2^G)}"] & v\rhd'FG(m\otimes' n) \arrow[u, "s^F"] \arrow[ddd, "{\id\,\rhd'\,\varepsilon}"'] \\
& v\rhd'(FG(m)\otimes'FG(n)) \arrow[u, "{\id\,\rhd'\,\varphi_2^F}"] \arrow[rdd,start anchor = east, "{\id\,\rhd'\,(\varepsilon\,\otimes'\,\varepsilon)}", pos=0.1]\arrow[ru,"\com{\dfn \varphi_2^G}" description, color = white] \arrow[dd, "{\id\,\rhd'\,(\varepsilon\,\otimes'\,\id)}"', bend left=40, shift left = 15, pos = 0.1] &\\
F(v\rhd G(m))\otimes'FG(n) \arrow[rddd, "\com{\dfn s^G}" description, color = white, shorten >=3cm]\arrow[ddd, "{F(s^G)\,\otimes'\,\id}"] \arrow[uuu, "\varphi_2^F"']& (v\rhd' FG(m))\otimes'FG(n) \arrow[u, "\alpd1"] \arrow[dd, "{(\id\,\rhd'\,\varepsilon)\,\otimes'\,\id}"', bend right, shift right = 13]\arrow[d, "\com{\nat \alpd1}" description, color = white] \arrow[l, "{s^F\,\otimes'\,\id}"] &\\
& v\rhd'(m\otimes' FG(n)) \arrow[r, pos = 0.7, "\com{\funct \otimes'}" description, color = white, shift left = 8, shorten <= 1cm]\arrow[r, "{\id\,\rhd'\,(\id\,\otimes'\,\varepsilon)}"] & v\rhd'(m\otimes' n)\\
& (v\rhd' m)\otimes'FG(n) \arrow[ru, "\com{\nat \alpd1}" description, color = white]\arrow[u, "\alpd1"'] \arrow[d, "{\id\,\otimes'\,\varepsilon}"']&\\
FG(v\rhd' m)\otimes'FG(n) \arrow[r, pos = 0.8, "\com{\funct \otimes'}" description, shift left = 5, shorten <= 1cm, color = white]\arrow[rd, "\com{\dfn \varphi_2^G}" description, color = white]\arrow[d, "\varphi_2^F"] \arrow[r, "{\varepsilon\,\otimes'\,\varepsilon}"'] \arrow[ru, "{\varepsilon\,\otimes'\,\id}"] & (v\rhd' m)\otimes' n \arrow[ruu, "\alpd1"']\arrow[rd, "\com{\nat \varepsilon}" description, color = white]&\\
F(G(v\rhd' m)\otimes G(n)) \arrow[r, "F(\varphi_2^G)"'] & FG((v\rhd' m)\otimes'n) \arrow[u, "\varepsilon"] \arrow[r, "FG(\alpd1)"'] & FG(v\rhd'(m\otimes' n)) \arrow[uuu, "\varepsilon"]
\end{tikzcd}}
\end{equation*}
which commutes, for all $v\in\SV$ and $m,n\in\SM'$, due to the reasons indicated in boxes.

Replacing $\alp1$, $\alpd1$, $\alpdd1$ and \eqref{diagModuleMonoidalFunctorCompAlpha1} by $\alp2$, $\alpd2$, $\alpdd2$ and \eqref{diagModuleMonoidalFunctorCompAlpha2} in the diagram above, one obtains that $G$ satisfies \eqref{diagModuleMonoidalFunctorCompAlpha2}.

In total, we have that $(F,G,\eta,\varepsilon)$ witnesses that $F$ is an adjoint equivalence of non\--unital module monoidal categories.
\end{proof}

\begin{Definition}\label{def2CategoryModMonCat}
Non\--unital module monoidal categories, non\--unital module monoidal functors and non\--unital module monoidal natural transformations form a strict 2\==category, which we denote by $\ModMonCat$.
\end{Definition}
    
\subsection{Unital module monoidal categories}\label{secDefinitionsUnital}
\begin{Definition}\label{defUnitalModuleMonoidalCategory}
Let $\SV$ be a braided monoidal category. A \emph{unital module monoidal category (over $\SV$)} is a tuple $(\SM,\otimes, \1_\SM, a, l, r, \rhd, m, l^\rhd, \alp1, \alp2)$ where
\begin{itemize}
    \item $(\SM,\otimes, a, \rhd, m, l^\rhd, \alp1, \alp2)$ is a non\--unital module monoidal category and
    \item $(\SM,\otimes, \1_\SM, a, l, r)$ is a unital monoidal category.
\end{itemize}
\end{Definition}

Unless otherwise noted, any unital module monoidal category is always considered to be over a fixed but otherwise arbitrary braided monoidal category $\SV$. One automatically gets that the natural isomorphisms $\alp1$ and $\alp2$ behave well with some of the unit constraints. This is shown in the following
\begin{Proposition}\label{propFurtherAxiomsUnitalModuleMonoidalCategory}
Let $\SM$ be a unital module monoidal category. Then, for all $v\in\SV$ and $m\in\SM$, the diagrams
\begin{equation}\tag{$\alp1\,\&\,r$}\label{a1r}
\begin{tikzcd}
(v\rhd m)\otimes \1_\SM \arrow[rr, "{\alp1_{v,m,\1_\SM}}"] \arrow[rd, "r_{v\rhd m}"'] && v\rhd(m\otimes \1_\SM) \arrow[ld, "{\id_v\,\rhd\, r_m}"] \\
 & v\rhd m &
\end{tikzcd}
\end{equation}
and
\begin{equation}\tag{$\alp2\,\&\,l$}\label{a2l}
\begin{tikzcd}
\1_\SM\otimes(v\rhd m) \arrow[rr, "{\alp2_{v,\1_\SM,m}}"] \arrow[rd, "l_{v\rhd m}"'] && v\rhd(\1_\SM\otimes m) \arrow[ld, "{\id_v\,\rhd\, l_m}"] \\
& v\rhd m &
\end{tikzcd}
\end{equation}
commute.
\end{Proposition}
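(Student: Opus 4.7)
The strategy is the same as in \cite[Prop.\,2.2.4]{EGNO} and in the proof of Proposition \ref{propFurtherAxiomsNonUnitalModuleMonoidalCategory}: I would start from one of the hexagonal coherences mixing $\alp1$ (resp.\ $\alp2$) with the associator from Definition \ref{defNonUnitalModuleMonoidalCategory}, insert the monoidal unit $\1_\SM$ in a carefully chosen slot so that the triangle axiom \eqref{triangleA} becomes applicable, and then paste in the naturalities of $\alp1$, $\alp2$, $l$ and $r$ and functoriality of $\rhd$ and $\otimes$ until everything except the desired triangle cancels against an isomorphism.

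For \eqref{a1r}, specialize \eqref{a1a} to $m=\1_\SM$ and post-compose both composites of the resulting hexagon with $\id_v\rhd(\id_l\otimes l_n)$. On the top--right side, naturality of $\alp1$ in its third slot (with respect to $l_n$) followed by \eqref{triangleA} applied to $a_{v\rhd l,\1_\SM,n}$ reduces the composite to $\alp1_{v,l,n}\circ(r_{v\rhd l}\otimes \id_n)$. On the bottom--left side, functoriality of $\rhd$ and \eqref{triangleA} applied to $a_{l,\1_\SM,n}$, followed by naturality of $\alp1$ in its middle slot (with respect to $r_l$), reduces the composite to $\alp1_{v,l,n}\circ\bigl(((\id_v\rhd r_l)\circ \alp1_{v,l,\1_\SM})\otimes \id_n\bigr)$. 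Cancelling the isomorphism $\alp1_{v,l,n}$, then specializing to $n=\1_\SM$ and cancelling $-\otimes\id_{\1_\SM}$ via naturality of $r$ (using that $r_X$ is an isomorphism) yields \eqref{a1r}.

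The argument for \eqref{a2l} is the symmetric mirror: specialize \eqref{a2a} to $m=\1_\SM$, post-compose with $\id_v\rhd(\id_l\otimes l_n)$, and use functoriality of $\rhd$ together with \eqref{triangleA} and naturality of $\alp2$ in its middle and third slots to reduce both composites to expressions of the form $\alp2_{v,l,n}\circ(-)\circ a_{l,\1_\SM,v\rhd n}$. Cancelling $\alp2_{v,l,n}$, one is left with $(\id_l\otimes l_{v\rhd n})\circ a_{l,\1_\SM,v\rhd n}=\bigl(\id_l\otimes ((\id_v\rhd l_n)\circ \alp2_{v,\1_\SM,n})\bigr)\circ a_{l,\1_\SM,v\rhd n}$ (having used \eqref{triangleA} once more on the left); cancelling the associator and then setting $l=\1_\SM$ and stripping the outer $\id_{\1_\SM}\otimes -$ via naturality of $l$ produces \eqref{a2l}.

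The only real obstacle is bookkeeping: keeping track of which naturality square and which triangle axiom applies at each pasting, and checking that at the moment of each cancellation the factor being removed is actually an isomorphism. The final ``extension from the specific object $l\otimes \1_\SM$ (resp.\ $\1_\SM\otimes n$) to arbitrary $m\in\SM$'' step is forced by the fact that the hexagons only ever produce $\alp1$ or $\alp2$ evaluated on iterated tensor products, but it is resolved cleanly by one additional naturality of $r$ (resp.\ $l$).
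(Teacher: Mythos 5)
Your proposal is correct and is exactly the argument the paper has in mind: the paper's proof simply cites the technique of \cite[Prop.\,2.2.4]{EGNO} and notes that \eqref{a1a} yields \eqref{a1r} and \eqref{a2a} yields \eqref{a2l}, which is precisely the specialization-to-$\1_\SM$, triangle-axiom, naturality-and-cancellation argument you carry out in detail. The individual reductions (including the final stripping of the $\otimes\,\id_{\1_\SM}$ resp.\ $\id_{\1_\SM}\otimes$ factor via naturality of $r$ resp.\ $l$) all check out.
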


\begin{proof}
The statements are proved using the same technique as the proof of \cite[Prop.\,2.2.4]{EGNO}. Using the diagrams from Definition \ref{defNonUnitalModuleMonoidalCategory}, one has that
\begin{itemize}
    \item \eqref{a1a} yields \eqref{a1r} and
    \item \eqref{a2a} yields \eqref{a2l}.\qedhere
\end{itemize}
\end{proof}

\begin{Definition}\label{defUnitalModuleMonoidalFunctor}
A \emph{unital module monoidal functor} from a unital module monoidal category $\SM$ to a unital module monoidal category $\SM'$ is a tuple $(F,\varphi_2,\varphi_0,s)\colon\SM\longrightarrow\SM'$ such that
\begin{itemize}
    \item $(F,\varphi_2,s)\colon \SM\longrightarrow\SM'$ is a non\--unital module monoidal functor and 
    \item $(F,\varphi_2,\varphi_0)\colon (\SM,\otimes,\1_\SM,a,l,r)\longrightarrow(\SM',\otimes',\1_{\SM'},a',l',r')$ is a unital monoidal functor.
\end{itemize}
\end{Definition}

Combining Remark \ref{remCompMonoidalFunctors} and Proposition \ref{propCompNonUnitalModuleMonoidalFunctors} one gets that unital module monoidal functors are composable.

\begin{Definition}\label{defUnitalModuleMonoidalNaturalTransformation}
A \emph{unital module monoidal natural transformation} is a non\--unital module monoidal natural transformation, which is further compatible with $\varphi_0$, i.e.\ which is a unital monoidal natural transformation and a module natural transformation at once.
\end{Definition}

\begin{Definition}\label{defEquivalenceUnitalModuleMonoidalCategories}
Let $\SM$, $\SM'$ be two unital module monoidal categories and let $F\colon \SM\longrightarrow\SM'$ be a unital module monoidal functor. We call $F$ an \emph{equivalence of unital module monoidal categories} if there exist a unital module monoidal functor $G\colon \SM'\longrightarrow\SM$ and unital module monoidal natural isomorphisms $\eta\colon \Id_\SM\Longrightarrow GF$ and $\varepsilon\colon F G\Longrightarrow\Id_{\SM'}$.
\end{Definition}

Combining the proofs of Propositions \ref{propAlternativeEquivalenceMonoidalCategories} and \ref{propAlternativeEquivalenceNonUnitalModuleMonoidalCategories}, one gets the following statement, which extremely eases the verification, that a given unital module monoidal functor is an equivalence of unital module monoidal categories.

\begin{Proposition}\label{propAlternativeEquivalenceUnitalModuleMonoidalCategories}
Let $\SM$, $\SM'$ be two unital module monoidal categories and let $F\colon \SM\longrightarrow\SM'$ be a unital module monoidal functor. Then the following statements are equivalent:
\begin{itemize}
    \item[(i)] $F$ is an equivalence of unital module monoidal categories.
    \item[(ii)] $F$ is part of an adjoint equivalence of unital module monoidal categories.
    \item[(iii)] $F$ is an equivalence of ordinary categories.
\end{itemize}
\end{Proposition}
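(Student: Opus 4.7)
The proof strategy is to recycle the three previous equivalence propositions in the paper almost verbatim. The implications (ii)$\Rightarrow$(i) and (i)$\Rightarrow$(iii) are immediate from the definitions (an adjoint equivalence is in particular an equivalence, and forgetting the unital module monoidal structure on $F,G,\eta,\varepsilon$ leaves a plain equivalence of categories). All the work is in (iii)$\Rightarrow$(ii).

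Starting from an equivalence $F$ of ordinary categories, invoke \cite[Thm.\,IV.4.1]{MacLane} to produce an adjoint equivalence $(F,G,\eta,\varepsilon)$ of ordinary categories in which $F$ is fully faithful. I would then upgrade $G$ to a unital module monoidal functor in three independent steps, each of which is already done somewhere in the excerpt: (a) define $\varphi_2^G$ and $\varphi_0^G$ by the formulas used in the proof of Proposition~\ref{propAlternativeEquivalenceMonoidalCategories}, which makes $(G,\varphi_2^G,\varphi_0^G)$ a unital monoidal functor and $\eta,\varepsilon$ unital monoidal natural isomorphisms; (b) define $s^G$ by the formula used in the proof of Proposition~\ref{propAlternativeEquivalenceModuleCategories} (identical to the one appearing in the proof of Proposition~\ref{propAlternativeEquivalenceNonUnitalModuleMonoidalCategories}), which makes $(G,s^G)$ a module functor and $\eta,\varepsilon$ module natural isomorphisms; (c) observe that the proof of Proposition~\ref{propAlternativeEquivalenceNonUnitalModuleMonoidalCategories} already shows that with these choices $G$ satisfies the compatibility diagrams \eqref{diagModuleMonoidalFunctorCompAlpha1} and \eqref{diagModuleMonoidalFunctorCompAlpha2} with $\alp1,\alp2$. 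Collecting (a), (b), (c) gives the non\--unital module monoidal functor structure of Definition~\ref{defNonUnitalModuleMonoidalFunctor} together with the unit structure of Definition~\ref{defUnitalModuleMonoidalFunctor}, hence a unital module monoidal functor.

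Finally, $\eta$ and $\varepsilon$ are unital monoidal (by (a)) and module (by (b)) natural transformations, so by Definition~\ref{defUnitalModuleMonoidalNaturalTransformation} they are unital module monoidal natural isomorphisms. Together with the adjunction \eqref{adjunctionZigzag}, which is inherited from the underlying adjoint equivalence of categories, this exhibits $(F,G,\eta,\varepsilon)$ as an adjoint equivalence of unital module monoidal categories, proving (ii).

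The only ``honest'' point, and what I would expect to be the main obstacle if the previous propositions had not already been established, is the verification of diagrams \eqref{diagModuleMonoidalFunctorCompAlpha1} and \eqref{diagModuleMonoidalFunctorCompAlpha2} for $G$; here, however, the proof is literally the large diagram chase from the proof of Proposition~\ref{propAlternativeEquivalenceNonUnitalModuleMonoidalCategories}, which is unaffected by the presence of a unit and therefore requires no modification. Consequently, the proof reduces to pointing to the earlier propositions and can safely be omitted, exactly as the paper does for the braided and module cases.
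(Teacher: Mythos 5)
Your proposal is correct and matches the paper's approach: the paper simply states that the result follows by combining the proofs of Propositions \ref{propAlternativeEquivalenceMonoidalCategories} and \ref{propAlternativeEquivalenceNonUnitalModuleMonoidalCategories} and omits any further argument, which is exactly the assembly you describe. Your observation that the compatibility diagrams \eqref{diagModuleMonoidalFunctorCompAlpha1} and \eqref{diagModuleMonoidalFunctorCompAlpha2} for $G$ are unaffected by the presence of a unit is the key point making this combination legitimate.
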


\begin{Definition}\label{def2CategoryUModMonCat}
Unital module monoidal categories, unital module monoidal functors and unital module monoidal natural transformations form a strict 2\==category, which we denote by $\UModMonCat$.
\end{Definition}

From now on, by module monoidal categories/functors/natural transformations we mean unital module monoidal categories/functors/natural transformations. If something is meant to be non\--unital, we will explicitly write so.

\section{Pairs $(\ST,F)$}\label{secPairs}

In \cite[Def.\,3.6]{HPT15}, the datum of a module tensor category is introduced as a tuple $(\ST,F)$ consisting of a unital monoidal category $\ST$ and a braided central unital monoidal functor $(F,F^Z)\colon\SV\longrightarrow\ST$, which is a unital monoidal functor $F\colon\SV\longrightarrow\ST$ together with a braided unital monoidal lift $F^Z\colon\SV\longrightarrow Z(\ST)$ from a braided unital monoidal category $\SV$ to the Drinfeld center of $\ST$. We refer to a datum $(\ST,F)$ as a \emph{pair} and later prove that this notion is equivalent to our notion of a unital module monoidal category (cf.\ Sec.\,\ref{secDefinitionsUnital}). 

To fix notation, we briefly recall the notion of the Drinfeld center of a monoidal category in Section \ref{secPairsDrinfeldCenterAsBraidedCategory} and introduce braided central monoidal functors and the notion of a pair as well as its morphisms, 2\==morphisms and equivalences in Section \ref{secPairsBraidedCentralFunctors}.

\subsection{The Drinfeld center as a braided category}\label{secPairsDrinfeldCenterAsBraidedCategory}
Note, that there are two conventions for the half\--braiding on an object of the Drinfeld center. In this section, we briefly introduce our convention and see that the Drinfeld center is a braided category.
\begin{Definition}[{\cite[Def.\,7.13.1]{EGNO}}]\label{defDrinfeldCenter}
Let $\ST$ be a monoidal category. The \emph{Drinfeld center} of $\ST$ is the category $Z(\ST)$ given by
\begin{itemize}
    \item objects $(t, \beta_{t,-})$, where $t\in\ST$ and $\beta_{t,-}\colon t\,\otimes - \Longrightarrow -\otimes\,t$ is a natural isomorphism, such that the diagram
    \begin{equation}\tag{half\--br.\,hex.}\label{defDrinfeldHalfBraiding}
    \begin{tikzcd}[column sep = huge]
    (t\otimes x)\otimes y \arrow[r, "{a_{t,x,y}}"] \arrow[d, "{\beta_{t,x}\,\otimes\,\id_y}"'] & t\otimes (x\otimes y) \arrow[r, "{\beta_{t,x\otimes y}}"] & (x\otimes y)\otimes t \arrow[d, "{a_{x,y,t}}"] \\
    (x\otimes t)\otimes y \arrow[r, "{a_{x,t,y}}"']& x\otimes (t\otimes y) \arrow[r, "{\id_x\,\otimes\,\beta_{t,y}}"'] & x\otimes (y\otimes t) 
    \end{tikzcd}
    \end{equation}
    commutes for all $x,y\in\ST$, and
    \item morphisms $f\colon (t, \beta_{t,-})\longrightarrow(t', \beta_{t',-})$, where $f$ is a morphism in $\ST$ from $t$ to $t'$ such that the diagram
    \begin{equation}\label{defDrinfeldMorphism}
    \begin{tikzcd}
    t\otimes x \arrow[r, "{\beta_{t,x}}"] \arrow[d, "{f\,\otimes\,\id_x}"'] & x\otimes t \arrow[d, "{\id_x\,\otimes\,f}"] \\
    t'\otimes x \arrow[r, "{\beta_{t',x}}"']& x\otimes t'
    \end{tikzcd}
    \end{equation}
    commutes for all $x\in\ST$.
\end{itemize}
\end{Definition}
Note that, for better distinction, the half\--braiding is consequently denoted as $\beta$ whereas the braiding of a braided monoidal category $\SV$ is always denoted as $\beta^\SV$.

For the Drinfeld center to be a braided category it necessarily has to be a monoidal category. This is ensured by the following statement.
\begin{Proposition}[{\cite[Def.\,7.13.1]{EGNO}}]\label{propDrinfeldCenterIsMonoidal}
Let $\ST$ be a monoidal category. The Drinfeld center $Z(\ST)$ becomes a monoidal category with
\begin{itemize}
    \item $\left(t, \beta_{t,-}\right)\otimes\left(t', \beta_{t',-}\right)\coloneqq \left(t\otimes t', \beta_{t\otimes t',-}\right)$, where
    \begin{equation*}
        \beta_{t\otimes t',x}\coloneqq a_{x,t,t'}\circ \left(\beta_{t,x}\otimes \id_{t'}\right)\circ a_{t,x,t'}^{-1}\circ \left(\id_{t}\otimes\beta_{t',x}\right)\circ a_{t,t',x}
    \end{equation*}
    for all $x\in\ST$, which can be reformulated to equivalently read as a commutative diagram
    \begin{equation}\tag{half\--br.\,on\,$\otimes$}\label{defDrinfeldTensor}
    \begin{tikzcd}[column sep = huge]
    t\otimes(t'\otimes x) \arrow[r, "{a_{t,t',x}^{-1}}"] \arrow[d, "{\id_t\,\otimes\,\beta_{t',x}}"'] & (t\otimes t')\otimes x \arrow[r, "{\beta_{t\otimes t',x}}"] & x\otimes (t\otimes t') \arrow[d, "{a_{x,t,t'}^{-1}}"] \\
    t\otimes(x\otimes t') \arrow[r, "{a_{t,x,t'}^{-1}}"'] & (t\otimes x)\otimes t' \arrow[r, "{\beta_{t,x}\,\otimes\,\id_{t'}}"'] & (x\otimes t)\otimes t' 
    \end{tikzcd}
    \end{equation}
    \item unit object $(\1_\ST, \beta_{\1_\ST,-})$, where 
    \begin{equation}\tag{half\--br.\,on\,$\1$}\label{defDrinfeldUnitMorphism}
        \beta_{\1_\ST,x}\coloneqq r_x^{-1}\circ l_x
    \end{equation}
    for all $x\in\ST$ and
    \item associator $a^{Z(\ST)}\coloneqq a$ and unit constraints $l^{Z(\ST)}\coloneqq l$ and $r^{Z(\ST)}\coloneqq r$ given by the corresponding morphisms in $\ST$.
\end{itemize}
\end{Proposition}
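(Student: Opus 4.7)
The plan is to verify each of the data listed in the statement in turn. Throughout, I assume the category and associator structure on $Z(\ST)$ are induced from $\ST$ (so the pentagon and triangle axioms for $Z(\ST)$ are automatic once the structure morphisms are shown to lie in $Z(\ST)$), and focus on the four genuine coherence checks: (a) $\beta_{t\otimes t',-}$ is a half\-braiding, (b) $\beta_{\1_\ST,-}$ is a half\-braiding, (c) $\otimes$ is functorial on $Z(\ST)$, and (d) the components of $a$, $l$, $r$ are morphisms in $Z(\ST)$.

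First I would verify (a). Writing out \eqref{defDrinfeldHalfBraiding} for $(t\otimes t', \beta_{t\otimes t',-})$ against a pair $x,y\in\ST$ and substituting the definition \eqref{defDrinfeldTensor} gives a hexagon that one unfolds into a larger polygon. This polygon can be triangulated by repeatedly applying the half\-braiding hexagons for $t$ and for $t'$ against $x$, against $y$, and against $x\otimes y$, combined with four or five instances of \eqref{pentA} to rearrange the associators. I would do this as a single commutative diagram, placing $(t\otimes t')\otimes(x\otimes y)$ in the centre and inserting enough associators to bring the $t$\-half\-braiding past $x$ and $y$ independently, then the $t'$\-half\-braiding past $x$ and $y$, and finally reassembling via the pentagon. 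This is the main obstacle: it is routine but genuinely large, and one has to be careful about the directions of the associators.

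Next I would verify (b), the simpler fact that $\beta_{\1_\ST,x}\coloneqq r_x^{-1}\circ l_x$ satisfies \eqref{defDrinfeldHalfBraiding}. After substituting this definition into the hexagon, the diagram reduces to a combination of \eqref{aL}, \eqref{aR}, and naturality of $l$ and $r$, so it commutes essentially by the consequences of the triangle axiom recorded right after Definition \ref{defMonoidalCategory}.

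For (c), functoriality of $\otimes$, the only thing to check beyond the underlying functoriality in $\ST$ is that if $f\colon(t,\beta_{t,-})\to(t_1,\beta_{t_1,-})$ and $g\colon(t',\beta_{t',-})\to(t_1',\beta_{t_1',-})$ satisfy \eqref{defDrinfeldMorphism}, then so does $f\otimes g$ with respect to the half\-braiding on the tensor. Expanding $\beta_{t\otimes t',x}$ and $\beta_{t_1\otimes t_1',x}$ using \eqref{defDrinfeldTensor} and inserting $f\otimes g$ at either end produces a diagram whose interior is built from naturality of $a$, naturality of $\otimes$, and the hypothesis that $f$ and $g$ individually commute with the relevant half\-braidings. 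The commutativity of the associator pentagons and triangles in $Z(\ST)$ is then inherited directly from $\ST$, since the structure 1\-cells are literally the same arrows.

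Finally, for (d), I would check that each component $a_{T_1,T_2,T_3}$ of the associator, viewed as a morphism in $\ST$ between the underlying objects of the Drinfeld center objects $T_i=(t_i,\beta_{t_i,-})$, satisfies \eqref{defDrinfeldMorphism}. Substituting the half\-braidings on $(t_1\otimes t_2)\otimes t_3$ and on $t_1\otimes(t_2\otimes t_3)$ by \eqref{defDrinfeldTensor} and chasing through \eqref{pentA} reduces this to an identity among associators; similarly, for $l$ and $r$ one compares with the definition \eqref{defDrinfeldUnitMorphism} and uses \eqref{aL} and \eqref{aR}. Once all four points are established, the strict pentagon and triangle identities in $Z(\ST)$ follow from those in $\ST$ by definition of the forgetful functor $R\colon Z(\ST)\to\ST$, completing the verification that $Z(\ST)$ is a (unital) monoidal category.
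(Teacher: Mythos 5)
Your plan is correct and is exactly the argument the paper has in mind: the paper explicitly omits this proof, describing it as a collection of rather large commutative diagrams built only from \eqref{pentA}, \eqref{triangleA} and the diagrams of Definition \ref{defDrinfeldCenter}, which is precisely your decomposition into checks (a)--(d). The only items worth adding for completeness are the (routine) naturality in $x$ of $\beta_{t\otimes t',-}$ and $\beta_{\1_\ST,-}$, which follow from naturality of $a$, $l$, $r$ and of the given half\--braidings.
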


The proof of this statement involves rather large commutative diagrams which still only use basic axioms like \eqref{pentA} and \eqref{triangleA} as well as the diagrams from Definition \ref{defDrinfeldCenter} several times. Therefore, it is omitted at this point.

Finally, the next proposition shows that $Z(\ST)$ is indeed a braided monoidal category.
\begin{Proposition}[{\cite[Rem.\,7.13.3]{EGNO}}]\label{propDrinfeldCenterIsBraided}
Let $\ST$ be a monoidal category. The Drinfeld center $Z(\ST)$ becomes a braided monoidal category using the half\--braiding, i.e.
\begin{equation}\label{defDrinfeldBraiding}
    \beta^{Z(\ST)}_{(t, \beta_{t,-}),(t',\beta_{t',-})}\coloneqq \beta_{t,t'}.
\end{equation}
\end{Proposition}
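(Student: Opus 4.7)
My plan is to check separately that $\beta^{Z(\ST)}$ as given by \eqref{defDrinfeldBraiding} takes values in $Z(\ST)$ (not merely in $\ST$), that it is a natural isomorphism, and that it satisfies the two braiding hexagons \eqref{braidingHexagon1} and \eqref{braidingHexagon2}. The main obstacle lies entirely in the first step; after that, the remaining pieces come almost for free.

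\textbf{Well\--definedness.} Given two objects $(t,\beta_{t,-}),(t',\beta_{t',-})\in Z(\ST)$, the morphism $\beta_{t,t'}\colon t\otimes t'\to t'\otimes t$ a priori lives only in $\ST$. I would verify the compatibility square \eqref{defDrinfeldMorphism} between $\beta_{t,t'}$ and the half\--braidings on the two tensor products, i.e.\
\[(\id_x\otimes \beta_{t,t'})\circ \beta_{t\otimes t',x}=\beta_{t'\otimes t,x}\circ(\beta_{t,t'}\otimes\id_x)\]
for every test object $x\in\ST$. The plan is to insert the defining formula \eqref{defDrinfeldTensor} for both $\beta_{t\otimes t',x}$ and $\beta_{t'\otimes t,x}$ and reduce the resulting diagram using the hexagon \eqref{defDrinfeldHalfBraiding} for $\beta_{t,-}$ at $(t',x)$ and for $\beta_{t',-}$ at $(t,x)$, together with the naturality in $\ST$ of each half\--braiding applied to the other (regarded as an ordinary morphism in $\ST$). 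The resulting diagram is sizeable but entirely mechanical, exactly in the spirit of the omitted proof of Proposition \ref{propDrinfeldCenterIsMonoidal}; this will be the real content of the proposition.

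\textbf{Invertibility and naturality.} Invertibility of $\beta^{Z(\ST)}_{(t,-),(t',-)}$ in $Z(\ST)$ is then immediate: $\beta_{t,t'}$ is invertible in $\ST$ because $\beta_{t,-}$ is a natural isomorphism, and reflecting the square above shows that $\beta_{t,t'}^{-1}$ again satisfies \eqref{defDrinfeldMorphism}. For naturality, I would note that naturality of $\beta^{Z(\ST)}$ in the second argument is literally the $\ST$\--naturality of $\beta_{t,-}$ applied to a morphism $g\colon (t',\beta_{t',-})\to(\tilde t',\beta_{\tilde t',-})$, while naturality in the first argument, applied to $f\colon (t,\beta_{t,-})\to(\tilde t,\beta_{\tilde t,-})$, is precisely the condition \eqref{defDrinfeldMorphism} defining morphisms of $Z(\ST)$, evaluated in the second slot at $\tilde t'$. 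Combining these two in a short diagram chase yields the bifunctorial equation $\beta_{\tilde t,\tilde t'}\circ(f\otimes g)=(g\otimes f)\circ \beta_{t,t'}$.

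\textbf{Hexagons.} For the braiding hexagons no further diagram chase is needed: since the associator of $Z(\ST)$ is inherited from $\ST$ by Proposition \ref{propDrinfeldCenterIsMonoidal}, substituting \eqref{defDrinfeldBraiding} into \eqref{braidingHexagon1} for a triple of objects $(u,\beta_{u,-}),(v,\beta_{v,-}),(w,\beta_{w,-})\in Z(\ST)$ produces exactly the half\--braiding hexagon \eqref{defDrinfeldHalfBraiding} for $(u,\beta_{u,-})$ at the pair $(v,w)$, which holds by the definition of $Z(\ST)$. Similarly, \eqref{braidingHexagon2} unfolds to the defining identity \eqref{defDrinfeldTensor} for the half\--braiding on the tensor product $(u,\beta_{u,-})\otimes(v,\beta_{v,-})$ evaluated at $w$. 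Both hexagons therefore hold by construction, confirming that the entire force of the statement is concentrated in the well\--definedness step.
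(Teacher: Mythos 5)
Your proposal is correct and, on the two braiding hexagons, takes exactly the same route as the paper: substituting \eqref{defDrinfeldBraiding} turns \eqref{braidingHexagon1} into \eqref{defDrinfeldHalfBraiding} and \eqref{braidingHexagon2} into \eqref{defDrinfeldTensor}, since the associator of $Z(\ST)$ is inherited from $\ST$. The additional checks you flag --- that $\beta_{t,t'}$ satisfies \eqref{defDrinfeldMorphism} and so is genuinely a morphism in $Z(\ST)$, and that the resulting family is a natural isomorphism there --- are correctly identified and correctly sketched; the paper's proof leaves them implicit, so on this point your write-up is more complete than the original.
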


\begin{proof}
We have to show that the braiding given by Equation \eqref{defDrinfeldBraiding} satisfies the two braiding axioms \eqref{braidingHexagon1} and \eqref{braidingHexagon2} from Definition \ref{defBraidedCategory}. Since the associator of $Z(\ST)$ is given by the associator of $\ST$, these are exactly diagrams \eqref{defDrinfeldHalfBraiding} from Definition \ref{defDrinfeldCenter} and \eqref{defDrinfeldTensor} from Definition \ref{propDrinfeldCenterIsMonoidal}.
\end{proof}

\subsection{Braided central monoidal functors and pairs $(\ST,F)$}\label{secPairsBraidedCentralFunctors}
We want to introduce the notions of a pair, its morphisms, 2\==morphisms and equivalences. Note, that in the reference the datum of a pair is called a module tensor category. Further, the authors of the reference assumed pivotal structures both on $\SV$ and $\ST$. Since we do not need these, the definitions below differ slightly from the ones in the reference.

For the entire section, let $\SV$ be a braided monoidal category.
\begin{Definition}[{\cite[Def.\,3.5]{HPT15}}]\label{defBraidedCentralFunctor}
Let $\ST$ be a monoidal category.
\begin{itemize}
    \item A \emph{central} functor $(F,F^Z)\colon\SV\longrightarrow \ST$ is a functor $F\colon\SV\longrightarrow\ST$ together with a lift $F^Z\colon\SV\longrightarrow Z(\ST)$, i.e.\ $F=R F^Z$, where $R$ denotes the forgetful functor. Often, the datum of a lift for a central functor will be suppressed and left implicit.
    \item A $\emph{central monoidal}$ functor is a monoidal functor which is central with a monoidal lift.
    \item A \emph{braided central monoidal} functor is a monoidal functor which is central with a braided monoidal lift.
\end{itemize}
\end{Definition}

\begin{Definition}[{\cite[Def.\,3.1]{HPT16}}]\label{defPair}
Let $\SV$ be a braided monoidal category. For the context of this paper, we define a \emph{pair (over $\SV$)} to be a tuple $(\ST,F)$ where $\ST$ is a monoidal category and $F\colon\SV\longrightarrow \ST$ is a braided central monoidal functor.
\end{Definition}
Unless otherwise noted, any pair is always considered to be over a fixed but otherwise arbitrary braided monoidal category $\SV$.

\begin{Definition}[{\cite[Def.\,3.2]{HPT16}}]\label{defMorphismOfPairs}
A \emph{morphism of pairs} from a pair $(\ST,F)$ to a pair $(\ST',F')$ is a tuple $(G,\varphi_2,\varphi_0,\gamma)$ where
\begin{itemize}
    \item $(G,\varphi_2,\varphi_0)\colon\ST\longrightarrow\ST'$ is a monoidal functor,
    \item $\gamma\colon F'\Longrightarrow G F$ is a monoidal natural isomorphism called the \emph{action coherence} with components
    \[\gamma_v\colon F'(v)\longrightarrow GF(v)\]
    for all $v\in\SV$.
\end{itemize}
Further, the action coherence has to be compatible with the half\--braidings on $Z(\ST)$ and $Z(\ST')$. This is assured by the commutativity of the diagram
\begin{equation}\tag{$\gamma$\,\&\,$\beta$}\label{actionCoherenceAndBraiding}
\begin{tikzcd}[column sep = huge]
F'(v)\otimes' G(t) \arrow[r, "{\gamma_v\,\otimes'\,\id_{G(t)}}"] \arrow[d, "{\beta_{F'(v),G(t)}^{Z(\ST')}}"'] & GF(v)\otimes' G(t) \arrow[r, "{\varphi_{2,F(v),t}^G}"] & G(F(v)\otimes t) \arrow[d, "{G(\beta_{F(v),t}^{Z(\ST)})}"] \\
G(t)\otimes' F'(v) \arrow[r, "{\id_{G(t)}\,\otimes'\,\gamma_v}"'] & G(t)\otimes'GF(v) \arrow[r, "{\varphi_{2,t,F(v)}^G}"'] & G(t\otimes F(v))
\end{tikzcd}
\end{equation}
for all $v\in\SV$ and $t\in\ST$, where $\beta^{Z(\ST)}$ (resp.\ $\beta^{Z(\ST')}$) is the half\--braiding (and not the braiding) on $Z(\ST)$ (resp.\ $Z(\ST')$).
\end{Definition}

\begin{Remark}\label{remCompMorphismsOfPairs}
Given three pairs $(\ST, F)$, $(\ST',F')$, $(\ST'',F'')$ and two morphisms $G\colon(\ST,F)\longrightarrow(\ST,F')$ and $G'\colon(\ST',F')\longrightarrow(\ST'',F'')$ of pairs, the composition $(H,\varphi_2^H,\varphi_0^H,\gamma^H)$ with $(H,\varphi_2^H,\varphi_0^H)\coloneqq G'G$ as monoidal functors (cf.\ Rem.\,\ref{remCompMonoidalFunctors}) and
\begin{equation}\tag{compos.\,$\gamma$}\label{composGamma}
    \gamma_v^H\coloneqq\left[ F''(v)\xrightarrow[]{\gamma_v^{G'}}G'F'(v)\xrightarrow[]{G'(\gamma_v^G)}G'GF(v)\right]
\end{equation}
is again a morphism of pairs.
\end{Remark}

\begin{Definition}[{\cite[Def.\,3.3]{HPT16}}]\label{def2MorphismOfPairs}
Let $(\ST,F)$, $(\ST',F')$ be two pairs and let $G,G'\colon(\ST,F)\longrightarrow(\ST',F')$ be two morphisms of pairs between them. A \emph{2\==morphism of pairs} $\eta\colon G\Longrightarrow G'$ is a monoidal natural transformation which is compatible with the action coherence, i.e.\ the diagram
\begin{equation}\tag{nat.\,tr.\,\&\,$\gamma$}\label{condition2MorphismOfPairs}
\begin{tikzcd}
& GF(v) \arrow[dd, "\eta_{F(v)}"] \\
F'(v) \arrow[rd, "\gamma_v^{G'}"'] \arrow[ru, "\gamma_v^G"] & \\
& G'F(v) 
\end{tikzcd}
\end{equation}
commutes for all $v\in\SV$.
\end{Definition}

For later purposes, we need to understand equivalences of pairs.

\begin{Definition}\label{defEquivalencePairs}
Let $(\ST,F)$, $(\ST',F')$ be two pairs and let $G\colon (\ST,F)\longrightarrow(\ST',F')$ be a morphism of pairs. We call $G$ an \emph{equivalence of pairs} if there exist a morphism of pairs $G'\colon (\ST',F')\longrightarrow(\ST,F)$ and 2\==isomorphisms of pairs $\eta\colon \Id_{(\ST,F)}\Longrightarrow G'G$ and $\varepsilon\colon G G'\Longrightarrow\Id_{(\ST',F')}$.
\end{Definition}

As before, we get 

\begin{Proposition}\label{propAlternativeEquivalencePairs}
Let $(\ST,F),(\ST',F')$ be two pairs and let $G\colon (\ST,F)\longrightarrow(\ST',F')$ be a morphism of pairs. Then the following statements are equivalent:
\begin{itemize}
    \item[(i)] $G$ is an equivalence of pairs.
    \item[(ii)] $G$ is part of an adjoint equivalence of pairs.
    \item[(iii)] $G$ is an equivalence of ordinary categories.
\end{itemize}
\end{Proposition}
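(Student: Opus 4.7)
As in the previous analogous propositions, the implications (ii)$\Rightarrow$(i) and (i)$\Rightarrow$(iii) are immediate from the definitions, so the content of the statement is (iii)$\Rightarrow$(ii). Assume that $G\colon(\ST,F)\longrightarrow(\ST',F')$ is a morphism of pairs whose underlying functor is an equivalence of ordinary categories. By \cite[Thm.\,IV.4.1]{MacLane} we may assume $G$ is fully faithful and part of an adjoint equivalence $(G,G',\eta,\varepsilon)$ of ordinary categories. By the proof of Proposition \ref{propAlternativeEquivalenceMonoidalCategories}, $G'$ inherits a unique monoidal structure $(\varphi_2^{G'},\varphi_0^{G'})$ (with components given by conjugating $\varphi_2^G$ and $\varphi_0^G$ by the isomorphisms $\eta$ and $\varepsilon$) such that $(G,G',\eta,\varepsilon)$ becomes an adjoint equivalence of monoidal categories.

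The only genuinely new datum is the action coherence for $G'$. I would define
\[
\gamma_v^{G'}\coloneqq\bigl[F(v)\xrightarrow{\eta_{F(v)}}G'GF(v)\xrightarrow{G'((\gamma_v^G)^{-1})}G'F'(v)\bigr]
\]
for all $v\in\SV$; this is forced by requiring that $\eta$ (and dually $\varepsilon$) become a 2\==isomorphism of pairs in the sense of \eqref{condition2MorphismOfPairs}. Naturality in $v$ follows from naturality of $\eta$ and $\gamma^G$, and monoidality of $\gamma^{G'}$ (its compatibility with $\varphi_2^{G'}$ and $\varphi_0^{G'}$) follows from monoidality of $\gamma^G$ together with the definitions of $\varphi_2^{G'}$ and $\varphi_0^{G'}$ in terms of $\eta$ and $\varepsilon$, in the same spirit as in the proof of Proposition \ref{propAlternativeEquivalenceNonUnitalModuleMonoidalCategories}. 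With this definition, the zig\--zag relations \eqref{adjunctionZigzag} and the definition of $\gamma^{G'}$ make \eqref{condition2MorphismOfPairs} commute for $\eta$ and $\varepsilon$ essentially tautologically.

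The main obstacle is to verify that $G'$ together with $\gamma^{G'}$ satisfies the half\--braiding compatibility \eqref{actionCoherenceAndBraiding}. This is a (large but routine) diagram chase analogous to the big hexagonal diagram in the proof of Proposition \ref{propAlternativeEquivalenceNonUnitalModuleMonoidalCategories}: one draws \eqref{actionCoherenceAndBraiding} for $G'$, applies $G$ to it, uses fully faithfulness of $G$ to reduce the check to a diagram in $\ST'$, and then fills in the interior by alternately inserting the naturality squares of $\eta$, $\varepsilon$, $\gamma^G$, $\varphi_2^G$, the defining equations of $\varphi_2^{G'}$ and $\gamma^{G'}$, and finally the hexagon \eqref{actionCoherenceAndBraiding} for $G$ itself. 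The technique is exactly the one used in the last displayed diagram of the proof of Proposition~\ref{propAlternativeEquivalenceNonUnitalModuleMonoidalCategories}; since no additional structure on $\SV$ or on the half\--braidings enters, the verification goes through and produces an adjoint equivalence $(G,G',\eta,\varepsilon)$ of pairs, establishing (iii)$\Rightarrow$(ii).
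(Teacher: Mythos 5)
Your proposal is correct and follows exactly the route the paper intends: the paper omits this proof, stating only that it ``works similar to'' Proposition \ref{propAlternativeEquivalenceMonoidalCategories}, and your argument is the natural fleshing-out of that omission in the style of the worked proof of Proposition \ref{propAlternativeEquivalenceNonUnitalModuleMonoidalCategories} (transport of structure along an adjoint equivalence, with the quasi-inverse's coherence data forced by requiring $\eta$ and $\varepsilon$ to be 2\==isomorphisms). In particular your formula $\gamma_v^{G'}=G'((\gamma_v^G)^{-1})\circ\eta_{F(v)}$ has the correct source and target and is indeed the unique choice compatible with \eqref{composGamma} and \eqref{condition2MorphismOfPairs}, and your identification of the half\--braiding hexagon \eqref{actionCoherenceAndBraiding} for $G'$ as the only substantive verification is accurate.
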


The proof works similar to that of Proposition \ref{propAlternativeEquivalenceMonoidalCategories} and is omitted at this point.

\begin{Definition}\label{def2CategoryPairs}
Pairs, morphisms of pairs and 2\==morphisms of pairs form a strict 2\==category, which we denote by $\Pairs$.
\end{Definition}

\section{Equivalence between unital module monoidal categories and pairs}\label{secEquivalence}

This section aims to prove the main theorem.

\begin{Theorem}\label{thmEquivalenceUnitalModuleMonoidalCategoriesAndPairs}
Let $\SV$ be a braided monoidal category. There is an equivalence \[\Pairs\cong \UModMonCat\]
between the 2\==category of pairs and the 2\==category of unital module monoidal categories.
\end{Theorem}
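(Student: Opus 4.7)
The plan is to construct a strict 2-functor $\Phi: \Pairs \to \UModMonCat$ that is essentially surjective on objects and induces an equivalence on every hom-category; by a standard 2-categorical argument this yields the desired equivalence of 2-categories. On an object $(\ST, F)$, set $\Phi(\ST, F) := \ST$ as a unital monoidal category, with module action $v \rhd t := F(v) \otimes t$, module associator and unit obtained from $\varphi_2^F$, $\varphi_0^F$, $a$, and $l$, and $\alp1_{v, s, t} := a_{F(v), s, t}$. The isomorphism $\alp2_{v, s, t}: s \otimes (F(v) \otimes t) \to F(v) \otimes (s \otimes t)$ is the composite of $a^{-1}$, $\beta_{F(v), s}^{-1} \otimes \id$, and $a$, where $\beta_{F(v), -}$ is the half-braiding witnessing $F(v) \in Z(\ST)$. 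Axioms \eqref{a1a}, \eqref{a1m}, \eqref{a2a}, \eqref{a2m}, and \eqref{a1a2} then reduce to the pentagon for $\ST$, \eqref{phi2Hexagon} for $F$, and the half-braiding hexagon \eqref{defDrinfeldHalfBraiding}; the key coherence \eqref{brac} is \eqref{braidedMonoidalFunctor} applied to the braided lift $F^Z$.

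On a 1-morphism $(G, \varphi_2^G, \varphi_0^G, \gamma)$, define $\Phi(G) := (G, \varphi_2^G, \varphi_0^G, s)$ with $s_{v, t} := \varphi^G_{2, F(v), t} \circ (\gamma_v \otimes' \id_{G(t)})$, and on 2-morphisms let $\Phi$ be the identity on the underlying natural transformation. The module functor axioms together with \eqref{diagModuleMonoidalFunctorCompAlpha1} and \eqref{diagModuleMonoidalFunctorCompAlpha2} follow from the monoidal functor axioms for $G$, monoidal naturality of $\gamma$, and \eqref{actionCoherenceAndBraiding}. For essential surjectivity, given $\SM \in \UModMonCat$, define $F^\SM: \SV \to \SM$ by $F^\SM(v) := v \rhd \1_\SM$, with monoidal structure built from $\alp1$, $l$, and $m^{-1}$, with $\varphi_0^{F^\SM} := (l^\rhd_{\1_\SM})^{-1}$, and with half-braiding on $F^\SM(v) \in Z(\SM)$ obtained by composing $\alp1$, unit constraints, and $(\alp2)^{-1}$. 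Using Definition \ref{defNonUnitalModuleMonoidalCategory} and Proposition \ref{propFurtherAxiomsUnitalModuleMonoidalCategory}, one checks that $(F^\SM, F^{\SM, Z})$ is braided central monoidal, with \eqref{brac} encoding exactly braided-ness of the lift. Then the identity on $\SM$, upgraded via $s_{v, m} := (\id_v \rhd l_m) \circ \alp1_{v, \1_\SM, m}$, witnesses $\Phi(\SM, F^\SM) \simeq \SM$.

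For local equivalence, by Propositions \ref{propAlternativeEquivalenceUnitalModuleMonoidalCategories} and \ref{propAlternativeEquivalencePairs} it suffices to check $\Phi$ is an equivalence of underlying hom-categories. Given $(G, \varphi_2^G, \varphi_0^G, s): \Phi(\ST, F) \to \Phi(\ST', F')$, recover
\[
\gamma_v := G(r_{F(v)}) \circ s_{v, \1_\ST} \circ (\id_{F'(v)} \otimes' \varphi_0^G) \circ (r'_{F'(v)})^{-1}
\]
and verify this defines a morphism of pairs whose image under $\Phi$ is canonically isomorphic to the original $s$; bijectivity on 2-morphisms follows because \eqref{condition2MorphismOfPairs} and \eqref{monNatTrafoS} are interderivable via $s \leftrightarrow \gamma$. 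The main obstacle is verifying \eqref{brac} and \eqref{actionCoherenceAndBraiding} in both directions: these are where braided-ness of $F^Z$ and compatibility of $\gamma$ with the half-braiding enter, and conversely where one extracts the braided lift $F^{\SM, Z}$ and the action coherence $\gamma$. The remaining axioms reduce to routine but lengthy diagram chases combining the data from Definitions \ref{defNonUnitalModuleMonoidalCategory} and \ref{defDrinfeldCenter}.
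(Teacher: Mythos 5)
Your proposal is correct and follows essentially the same route as the paper: the same 2\==functor on objects, 1\==morphisms and 2\==morphisms (your recovered $\gamma_v$ agrees with the paper's formula via \eqref{phi0r}), the same construction $F^\SM(v)=v\rhd\1_\SM$ for essential surjectivity, and the same recovery of $\gamma$ from $s$ on hom\==categories. The only (minor) deviation is that you ask for an \emph{equivalence} of hom\==categories where the paper establishes \emph{bijectivity} on 1\==morphisms, which is what upgrades the result from a biequivalence to an equivalence of strict 2\==categories.
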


In order to prove this, we explicitly write down a 2\==functor $\Psi\colon\Pairs\longrightarrow \UModMonCat$ and show that it is
\begin{itemize}
    \item essentially surjective on objects and
    \item the contained functors
    \[\Psi_{(\SM,F),(\SM',F')}\colon \Pairs((\SM,F),(\SM',F'))\longrightarrow \UModMonCat(\Psi(\SM,F),\Psi(\SM',F'))\]
    between Hom\==categories are
    \begin{itemize}
        \item bijective\footnote{Replacing the bijectivity condition on 1\==morphisms by the usual requirement that $\Psi_{(\SM,F),(\SM',F')}$ is essentially surjective (and hence an equivalence) only gives that $\Psi$ is an equivalence of bicategories (i.e.\ the coherence data of the bifunctors contained in the equivalence might be non\--trivial) \cite[Thm.\,7.4.1]{JY} rather than an equivalence of 2\==categories (i.e.\ the coherence data of the bifunctors in the equivalence are identities, turning them into 2\==functors).} on 1\==morphisms and
        \item bijective on 2\==morphisms, i.e.\ $\Psi_{(\SM,F),(\SM',F')}$ is fully faithful,
    \end{itemize}
\end{itemize}
which then implies that $\Psi$ is an equivalence of 2\==categories \cite[Thm.\,7.5.8]{JY}.

Since the whole section treats unital monoidal structures rather than non\--unital monoidal ones, we abbreviate such structures as monoidal.

This section is structured as follows. In Section \ref{secEquivalenceConstructionPsi} we construct the 2\==functor $\Psi$, in Section \ref{secEquivalenceObjects} we show that it is essentially surjective on objects and, finally, in Section \ref{secEquivalenceHomCategories} we address the second bullet point from above.

\subsection{The construction of the 2--functor $\Psi$}\label{secEquivalenceConstructionPsi}
A 2\==functor is a strict bifunctor between 2\==categories. In this section, we set up the definition of $\Psi$ as a bifunctor (cf.\ Const.\,\ref{conPsi} and Prop.\,\ref{propPsiIsWelldefined}), which turns out to be strict (cf.\ Prop.\,\ref{propPsiIsStrict}) and hence is indeed a 2\==functor.
\begin{Construction}\label{conPsi}
We define a bifunctor $\Psi\colon\Pairs\longrightarrow\UModMonCat$ as follows.

On objects, we set $\Psi(\SM,F)$ to be the module monoidal category obtained by extending the monoidal category $(\SM,\otimes,\1_\SM,a,l,r)$ to $(\SM,\otimes, \1_\SM, a, l, r, \rhd, m, l^\rhd, \alp1, \alp2)$ where we define
\begin{itemize}
    \item the action as
    \begin{align*}
        \rhd \colon \SV\times \SM&\longrightarrow\SM,\\
        (v,m)&\longmapsto v\rhd m\coloneqq F(v)\otimes m,\\
        \left(\left[v\overset{f}{\longrightarrow}v'\right],\left[m\overset{g}{\longrightarrow}m'\right]\right)&\longmapsto \left[v\rhd m \xrightarrow[]{f\,\rhd\, g}v'\rhd m'\right]\coloneqq \left[F(v)\otimes m\xrightarrow[]{F(f)\,\otimes\, g} F(v')\otimes m'\right],
    \end{align*}
    \item the module associativity constraint
    \[m_{v,w,m}\colon \begin{array}{l}(v\otimes w)\rhd m\\= F(v\otimes w)\otimes m\end{array}\longrightarrow\begin{array}{l}v\rhd(w\rhd m)\\=F(v)\otimes(F(w)\otimes m)\end{array}\]
    as
    \[m_{v,w,m}\coloneqq a_{F(v),F(w),m}\circ (\varphi^{-1}_{2,v,w}\otimes \id_m)\]
    for all $v,w\in\SV$ and $m\in\SM$,
    \item the module unit constraint
    \[l_m^\rhd\colon \begin{array}{l}\1_\SV\rhd m\\=F(\1_\SV)\otimes m\end{array}\longrightarrow m\]
    as
    \[l_m^\rhd\coloneqq l_m\circ (\varphi_0^{-1}\otimes \id_m)\]
    for all $m\in\SM$,
    \item the natural isomorphism
    \[\alp1_{v,m,n}\colon \begin{array}{l}(v\rhd m)\otimes n\\=(F(v)\otimes m)\otimes n\end{array}\longrightarrow \begin{array}{l}v\rhd (m\otimes n)\\=F(v)\otimes (m\otimes n)\end{array}\]
    as
    \[\alp1_{v,m,n}\coloneqq a_{F(v),m,n}\]
    for all $v\in\SV$ and $m,n\in\SM$ and
    \item the natural isomorphism
    \[\alp2_{v,m,n}\colon \begin{array}{l} m\otimes (v\rhd n)\\=m\otimes (F(v)\otimes n)\end{array}\longrightarrow\begin{array}{l} v\rhd (m\otimes n)\\=F(v)\otimes (m\otimes n)\end{array}\]
    as
    \[\alp2_{v,m,n}\coloneqq  a_{F(v),m,n}\circ (\beta_{F(v),m}^{-1}\otimes \id_n)\circ a_{m,F(v),n}^{-1}\]
    for all $v\in\SV$ and $m,n\in\SM$, where $\beta_{F(v),-}$ is the half\--braiding on $F^Z(v)=(F(v),\beta_{F(v),-})\in Z(\SM)$, which is by definition an isomorphism in $\SM$.
\end{itemize}
On 1\==morphisms, we set
\[\Psi\left[(\SM,F)\xrightarrow[]{\left(G,\varphi_2^G,\varphi_0^G,\gamma^G\right)}(\SM',F')\right]\coloneqq \left[\Psi(\SM,F)\xrightarrow[]{\left(\Psi(G),\varphi_2^{\Psi(G)},\varphi_0^{\Psi(G)},s^{\Psi(G)}\right)}\Psi(\SM',F')\right]\]
where we define
\begin{itemize}
    \item $\Psi$ to be the identity on the monoidal part of $G$,
    \[\left(\Psi(G),\varphi_2^{\Psi(G)},\varphi_0^{\Psi(G)}\right)\coloneqq\left(G,\varphi_2^G,\varphi_0^G\right),\]
    \item and the module functor constraint
    \[s^{\Psi(G)}\colon \begin{array}{l}v\rhd' G(m)\\=F'(v)\otimes' G(m)\end{array}\longrightarrow \begin{array}{l}G(v\rhd m)\\=G(F(v)\otimes m)\end{array}\]
    as
    \[s^{\Psi(G)}_{v,m}\coloneqq \varphi_{2,F(v),m}^G\circ (\gamma_v^G\otimes'\id_{G(m)})\]
    for all $v\in\SV$ and $m\in\SM$.
\end{itemize}
Finally, on 2\==morphisms we set $\Psi(\eta)=\eta$ for any 2\==morphism of pairs $\eta\colon G\Longrightarrow G'$ between morphisms of pairs $G,G'\colon (\SM,F)\longrightarrow(\SM',F')$.
\end{Construction}

\begin{Proposition}\label{propPsiIsWelldefined}
The bifunctor $\Psi\colon\Pairs\longrightarrow\UModMonCat$ from Construction \ref{conPsi} is well\--defined, i.e.\ we have to check that
\begin{itemize}
\item the data $\Psi(\SM,F)$ form a module monoidal category,
\item the data $\Psi(G,\varphi_2^G,\varphi_0^G,\gamma^G)$ form a module monoidal functor and
\item the datum $\Psi(\eta)$ is a module monoidal natural transformation.
\end{itemize}
\end{Proposition}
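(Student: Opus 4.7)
The strategy is to unfold every new module-monoidal coherence on $\Psi(\ST,F)$ inside the monoidal category $\ST$, using only the monoidal axioms of $\ST$, the hexagon and unit axioms for $\varphi_2^F,\varphi_0^F$, and the structural data of the central lift $F^Z$. Since $v\rhd m = F(v)\otimes m$, the module associator $m_{v,w,m}$ is built from $a$ and $\varphi_2^F$, the module unit constraint $l^\rhd_m$ from $l$ and $\varphi_0^F$, the natural isomorphism $\alp1$ is an associator of $\ST$, and $\alp2$ is an associator composed with the half-braiding $\beta_{F(v),-}$ coming from $F^Z(v)\in Z(\ST)$. After substituting these definitions, the axioms \eqref{pentM} and \eqref{triangleM} reduce to \eqref{pentA} combined with \eqref{phi2Hexagon}, respectively to \eqref{triangleA} combined with \eqref{phi0l}; the diagrams \eqref{a1a} and \eqref{a1m} collapse to iterated instances of \eqref{pentA} and \eqref{phi2Hexagon}; and \eqref{a2a}, \eqref{a2m}, \eqref{a1a2} reduce to the half-braiding hexagon \eqref{defDrinfeldHalfBraiding} for $F^Z(v)$, to the tensor formula \eqref{defDrinfeldTensor} for $F^Z(v)\otimes F^Z(w)$ together with the fact that $\varphi_2^F$ is a morphism in $Z(\ST)$ (i.e.\ satisfies \eqref{defDrinfeldMorphism}), and to naturality of $a$ and $\beta$. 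The crucial diagram is \eqref{brac}: unfolded, it asserts precisely that the composition of half-braidings $\beta_{F(w),F(v)}\circ\beta_{F(v),F(w)}$ equals $F(\beta^{\SV}_{w,v}\circ\beta^{\SV}_{v,w})$ up to $\varphi_2^F$, which is exactly \eqref{braidedMonoidalFunctor} for $F^Z$ combined with the identification \eqref{defDrinfeldBraiding} of $\beta^{Z(\ST)}$ with the half-braiding.

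For the second bullet, the monoidal part of $\Psi(G)$ equals the monoidal part of $G$, so \eqref{phi2Hexagon}, \eqref{phi0l} and \eqref{phi0r} are immediate. The module-functor axioms \eqref{sPentagon} and \eqref{sTriangle} for $s^{\Psi(G)}_{v,m}=\varphi_{2,F(v),m}^G\circ(\gamma^G_v\otimes'\id_{G(m)})$ unfold into \eqref{phi2Hexagon} for $\varphi_2^G$ together with the monoidality \eqref{monNatTrafoPhi2} of $\gamma^G$, respectively into \eqref{phi0l} together with \eqref{monNatTrafoPhi0} for $\gamma^G$. Diagram \eqref{diagModuleMonoidalFunctorCompAlpha1}, after substituting the definitions of $\alp1$, $\alpd1$ and $s^{\Psi(G)}$, reduces to \eqref{phi2Hexagon} for $\varphi_2^G$ and naturality; diagram \eqref{diagModuleMonoidalFunctorCompAlpha2}, once the half-braidings $\beta_{F(v),-}$ and $\beta_{F'(v),-}$ arising from the $\alp2$'s on both sides are made visible, becomes precisely \eqref{actionCoherenceAndBraiding} for $\gamma^G$ together with naturality of $\varphi_2^G$ and $a$.

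For the third bullet, with $\Psi(\eta)=\eta$ already a unital monoidal natural transformation, only the module naturality \eqref{monNatTrafoS} has to be verified. Substituting $s^{\Psi(G)}$ and $s^{\Psi(G')}$, the required square splits as the pasting of two commuting squares: the first is \eqref{monNatTrafoPhi2} for $\eta$, the second is \eqref{condition2MorphismOfPairs} tensored with $\id_{G(m)}$, both provided by the definition of a $2$-morphism of pairs. I expect the main obstacle to be \eqref{brac}: it is the only axiom whose verification genuinely uses that $F^Z$ is a \emph{braided} monoidal lift rather than merely monoidal, and it requires carefully expanding six successive compositions of associators, module associators and half-braidings before the braided-functor equation can be applied. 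The diagrams \eqref{a2m}, \eqref{a1a2} and \eqref{diagModuleMonoidalFunctorCompAlpha2} are similarly bulky but conceptually routine, being direct applications of the half-braiding hexagon and the action coherence.
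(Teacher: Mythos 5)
Your overall strategy coincides with the paper's proof: unfold the definitions from Construction \ref{conPsi} and reduce each coherence diagram of $\Psi(\SM,F)$, $\Psi(G)$ and $\Psi(\eta)$ to \eqref{pentA}, \eqref{triangleA}, the monoidal functor axioms, the Drinfeld\--center axioms \eqref{defDrinfeldHalfBraiding}, \eqref{defDrinfeldTensor}, \eqref{defDrinfeldMorphism}, the braided\--functor square \eqref{braidedMonoidalFunctor} and the pair axioms \eqref{actionCoherenceAndBraiding} and \eqref{condition2MorphismOfPairs}. Your assignment of ingredients to diagrams agrees with the paper's in almost every case, and the second and third bullet points are handled exactly as in the paper.

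Two local corrections are needed. First, \eqref{triangleM} reduces to \eqref{triangleA} combined with \eqref{phi0r}, not \eqref{phi0l}: the edge $r_v\rhd\id_m=F(r_v)\otimes\id_m$ is absorbed using the compatibility of $\varphi_0$ with the \emph{right} unit constraint. Second, and more substantively, your description of the unfolded \eqref{brac} is not what that diagram asserts. The diagram \eqref{brac} contains only one instance of $\beta^\SV_{v,w}$ (its bottom edge), and after expanding $\beta^{-1}_{F(v),F(w)\otimes m}$ via \eqref{defDrinfeldHalfBraiding} and cancelling the $\beta^{-1}_{F(v),m}$ factors common to both paths, what remains to be checked is the single square $F(\beta^\SV_{v,w})\circ\varphi_{2,v,w}=\varphi_{2,w,v}\circ\beta_{F(v),F(w)}$, i.e.\ \eqref{braidedMonoidalFunctor} together with \eqref{defDrinfeldBraiding} --- not the monodromy identity $\beta_{F(w),F(v)}\circ\beta_{F(v),F(w)}=F(\beta^\SV_{w,v}\circ\beta^\SV_{v,w})$ that you state, and that identity is in any case a strictly weaker consequence of \eqref{braidedMonoidalFunctor}, not ``exactly'' it. Had you verified only the monodromy identity the argument would not close; since the tool you actually invoke is the full braided\--functor axiom, the proof goes through once the target of the reduction is stated correctly. (A minor further point: \eqref{a1a2} needs no half\--braiding input at all, since the same $\beta^{-1}_{F(v),l}$ occurs on both paths and cancels via naturality of $a$ and two instances of \eqref{pentA}.)
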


\begin{proof}
First, we show that $\Psi(\SM,F)$ is a module monoidal category.

Note, that $(\SM,\otimes,\1_\SM,a,l,r)$ is a monoidal category by assumption. For $(\SM,\rhd,m,l^\rhd)$ to be a module category, we have to check that
\begin{itemize}
    \item $\rhd\colon\SV\times\SM\longrightarrow\SM$ is a functor, which follows from the functoriality of $\otimes$,
    \item $m$ and $l^\rhd$ are natural isomorphisms, which, as compositions of natural isomorphisms, they clearly are,
    \item the diagrams \eqref{pentM} and \eqref{triangleM} from Definition \ref{defModuleCategory} commute. After resolving the definitions from Construction \ref{conPsi}, \eqref{pentM} becomes
    \begin{equation*}
    {\footnotesize\begin{tikzcd}[column sep = huge]
    F((u\otimes v)\otimes w)\otimes m \arrow[d, "F(a)\,\otimes\,\id"']  &  (F(u\otimes v)\otimes F(w))\otimes m\arrow[rd, "\com{\nat a}" description, color = white]\arrow[r, "{a}"] \arrow[l, "{\varphi_2\,\otimes\,\id}"']& F(u\otimes v)\otimes (F(w)\otimes m)\\
    F(u\otimes(v\otimes w))\otimes m\arrow[rd, "\com{\eqref{phi2Hexagon}}" description, color = white]& ((F(u)\otimes F(v))\otimes F(w))\otimes m\arrow[r, "a"'] \arrow[u, "{(\varphi_2\,\otimes\,\id)\,\otimes\,\id}"]\arrow[d, "{a\,\otimes\,\id}"'] & (F(u)\otimes F(v))\otimes (F(w)\otimes m) \arrow[dd, "a"] \arrow[u, "{\varphi_2\,\otimes\,\id}"']\\
    (F(u)\otimes F(v\otimes w))\otimes m \arrow[d, "a"'] \arrow[u, "{\varphi_2\,\otimes\,\id}"]\arrow[rd, "\com{\nat a}" description, color = white]&(F(u)\otimes (F(v)\otimes F(w)))\otimes m   \arrow[d, "a"] \arrow[l, "{(\id\,\otimes\,\varphi_2)\,\otimes\,\id}"]\arrow[ru, "\com{\eqref{pentA}}" description, color = white] &\\
    F(u)\otimes(F(v\otimes w)\otimes m)  &F(u)\otimes((F(v)\otimes F(w))\otimes m)\arrow[r, "\id\ \otimes\,a"'] \arrow[l, "{\id\,\otimes\,(\varphi_2\,\otimes\,\id)}"]& F(u)\otimes
    (F(v)\otimes(F(w)\otimes m))
    \end{tikzcd}}
    \end{equation*}
    which commutes, for all $u,v,w\in\SV$ and $m\in\SM$, due to the reasons indicated in boxes, and \eqref{triangleM} becomes
    \begin{equation*}
    \begin{tikzcd}
    F(v\otimes\1_\SV)\otimes m \arrow[rdd, "{F(r)\,\otimes\,\id}"']& (F(v)\otimes F(\1_\SV))\otimes m \arrow[r, "a"] \arrow[l, "{\varphi_2\,\otimes\,\id}"']\arrow[rd, "\com{\nat a}" description, color = white]\arrow[d, "\com{\eqref{phi0r}}" description, color = white, shift right = 10] & F(v)\otimes (F(\1_\SV)\otimes m) \\
    & (F(v)\otimes \1_\SM)\otimes m \arrow[u, "{(\id\,\otimes\,\varphi_0)\,\otimes\,\id}"'] \arrow[r, "a"'] \arrow[d, "{r\,\otimes\,\id}"']\arrow[d, shift left = 10, shorten >= 0.5cm, pos = 0.4, "\com{\eqref{triangleA}}" description, color = white] & F(v)\otimes(\1_\SM\otimes m) \arrow[u, "{\id\,\otimes\,(\varphi_0\,\otimes\,\id)}"'] \arrow[ld, "{\id\,\otimes\,l}"]\\
    & F(v)\otimes m &
    \end{tikzcd}
    \end{equation*}
    which commutes, for all $v\in\SV$ and $m\in\SM$, due to the reasons indicated in boxes.
\end{itemize}

Further, we have that $\alp1$ and $\alp2$ are, as compositions of natural isomorphisms, again natural isomorphisms.

We are left to check that the remaining conditions in form of commutative diagrams from Definition \ref{defUnitalModuleMonoidalCategory} are satisfied. One can see that after resolving the definitions from Construction \ref{conPsi},
\begin{itemize}
    \item diagram \eqref{a1a} is precisely \eqref{pentA},
    \item diagram \eqref{a1m} is a combination of \eqref{pentA} and naturality of $a$,
    \item diagram \eqref{a2a} becomes
    \begin{equation*}
    \begin{tikzcd}
    & (l\otimes m)\otimes (F(v)\otimes n)\arrow[ld, "a"'] & \\
    l\otimes (m\otimes (F(v)\otimes n))\arrow[dr, "\com{\eqref{pentA}}" description, color = white] & ((l\otimes m)\otimes F(v))\otimes n \arrow[u, "a"'] \arrow[d, "{a\,\otimes\,\id}"]& F(v)\otimes((l\otimes m)\otimes n) \arrow[ddddd, "{\id\,\otimes\,a}", bend left, shift left=14] \\
    l\otimes ((m\otimes F(v))\otimes n) \arrow[rd, "\com{\nat a}" description, color = white]\arrow[u, "{\id\,\otimes\,a}"]& (l\otimes (m\otimes F(v)))\otimes n \arrow[l, "a"]& (F(v)\otimes(l\otimes m))\otimes n \arrow[dd, shift left = 10, "\com{\eqref{pentA}}" description, color = white]\arrow[u, "a"] \arrow[lu, "{\beta\,\otimes\,\id}"] \\
    l\otimes((F(v)\otimes m)\otimes n) \arrow[dd, "\com{\eqref{pentA}}" description, shift left =  25, color = white]\arrow[u, "{\id\,\otimes\,(\beta\,\otimes\,\id)}"] \arrow[dd, "{\id\,\otimes\,a}"']& (l\otimes(F(v)\otimes m))\otimes n \arrow[ru, "\com{\eqref{defDrinfeldHalfBraiding}}" description, color = white]\arrow[u, "{(\id\,\otimes\,\beta)\,\otimes\,\id}"] \arrow[l, "a"] & \\
    & ((l\otimes F(v))\otimes m)\otimes n \arrow[rd, "\com{\nat a}" description, color = white] \arrow[u, "{a\,\otimes\,\id}"'] \arrow[d, "a"]& ((F(v)\otimes l)\otimes m)\otimes n \arrow[uu, "{a\,\otimes\,\id}"] \arrow[d, "a"'] \arrow[l, "{(\beta\,\otimes\,\id)\,\otimes\,\id}"]\\
    l\otimes(F(v)\otimes(m\otimes n)) & (l\otimes F(v))\otimes (m\otimes n) \arrow[l, "a"] & (F(v)\otimes l)\otimes (m\otimes n) \arrow[d, "a"'] \arrow[l, "{\beta\,\otimes\,(\id\,\otimes\,\id)}"]\\
    & & F(v)\otimes(l\otimes(m\otimes n))
    \end{tikzcd}
    \end{equation*}
    which commutes, for all $v\in\SV$ and $l,m,n\in\SM$, due to the reasons indicated in boxes,
    \item diagram \eqref{a2m} becomes
    \begin{equation*}
    {\footnotesize\begin{tikzcd}
    m\otimes(F(v\otimes w)\otimes n)\arrow[rd, "\com{\nat a}" description, color = white]& (m\otimes F(v\otimes w))\otimes n \arrow[l, "a"] \arrow[d, shift left = 20, "\com{\nat \beta}" description, color = white] & (F(v\otimes w)\otimes m)\otimes n \arrow[d, "a"] \arrow[l, "{\beta\,\otimes\,\id}"']\arrow[d, "\com{\nat a}" description, color = white, shift right = 10]\\
    m\otimes ((F(v)\otimes F(w))\otimes n) \arrow[d, "{\id\,\otimes\,a}"'] \arrow[u, "{\id\,\otimes\,(\varphi_2\,\otimes\,\id)}"]\arrow[dd, "\com{\eqref{pentA}}" description, shift left = 33, color = white] & (m\otimes (F(v)\otimes F(w)))\otimes n \arrow[l, "a"] \arrow[u, "{(\id\,\otimes\,\varphi_2)\,\otimes\,\id}"] & F(v\otimes w)\otimes (m\otimes n) \\
    m\otimes(F(v)\otimes(F(w)\otimes n))&& (F(v)\otimes F(w))\otimes (m\otimes n) \arrow[u, "{\varphi_2\,\otimes\,\id}"'] \arrow[dddd, "a", bend left, shift left=17]\\
    (m\otimes F(v))\otimes (F(w)\otimes n) \arrow[u, "a"] \arrow[rd, "\com{\nat a}" description, color = white] & ((m\otimes F(v))\otimes F(w))\otimes n \arrow[uu, "{a\,\otimes\,\id}"] \arrow[l, "a"] \arrow[rd, "\com{\eqref{defDrinfeldTensor}}" description, color = white] & ((F(v)\otimes F(w))\otimes m)\otimes n \arrow[d, "{a\,\otimes\,\id}"] \arrow[luu, "{\beta\,\otimes\,\id}", start anchor = north west, end anchor = south east] \arrow[u, "a"'] \arrow[uuu, "{(\varphi_2\,\otimes\,\id)\,\otimes\,\id}"', bend left, shift left=17] \\
    (F(v)\otimes m)\otimes (F(w)\otimes n) \arrow[u, "{\beta\,\otimes\,\id}"] \arrow[dd, "a"'] \arrow[dd, "\com{\eqref{pentA}}" description, shift left = 29, color = white] & ((F(v)\otimes m)\otimes F(w))\otimes n \arrow[u, "{(\beta\,\otimes\,\id)\,\otimes\,\id}"] \arrow[l, "a"] \arrow[d, "{a\,\otimes\,\id}"'] & (F(v)\otimes(F(w)\otimes m))\otimes n \arrow[ld, "{(\id\,\otimes\,\beta)\,\otimes\,\id}"'] \arrow[d, "a"]\arrow[d, "\com{\eqref{pentA}}" description, shift left = 15, color = white] \\
    & (F(v)\otimes (m\otimes F(w)))\otimes n \arrow[d, "a"'] \arrow[r, "\com{\nat a}" description, color = white]& F(v)\otimes((F(w)\otimes m)\otimes n) \arrow[ld, "{\id\,\otimes\,(\beta\,\otimes\,\id)}"] \arrow[d, "{\id\,\otimes\,a}"'] \\
    F(v)\otimes (m\otimes (F(w)\otimes n))& F(v)\otimes((m\otimes F(w))\otimes n) \arrow[l, "{\id\,\otimes\,a}"] & F(v)\otimes(F(w)\otimes(m\otimes n)) 
    \end{tikzcd}}
    \end{equation*}
    which commutes, for all $v,w\in\SV$ and $m,n\in\SM$, due to the reasons indicated in boxes,
    \item diagram \eqref{a1a2} is a combination of two times \eqref{pentA} and naturality of $a$ and
    \item diagram \eqref{brac} becomes
    \begin{equation*}
    \begin{tikzcd}
    (F(w)\otimes m)\otimes (F(v)\otimes n) \arrow[rrd, "\com{\eqref{pentA}}" description, color = white] \arrow[rr, "a"] && F(w)\otimes(m\otimes(F(v)\otimes n)) \\
    ((F(w)\otimes m)\otimes F(v))\otimes n \arrow[u, "a"]\arrow[rd, "\com{\eqref{defDrinfeldHalfBraiding}}" description, color = white] \arrow[r, "{a\,\otimes\,\id}"] & (F(w)\otimes (m\otimes F(v))\otimes n \arrow[r, "a"]\arrow[rd, "\com{\nat a}" description, color = white] & F(w)\otimes((m\otimes F(v))\otimes n) \arrow[u, "{\id\,\otimes\,a}"']\\
    (F(v)\otimes (F(w)\otimes m))\otimes n \arrow[u, "{\beta\,\otimes\,\id}"] \arrow[d, "a"'] & (F(w)\otimes(F(v)\otimes m))\otimes n \arrow[u, "{(\id\,\otimes\,\beta)\,\otimes\,\id}"] \arrow[r, "a"]& F(w)\otimes((F(v)\otimes m)\otimes n) \arrow[u, "{\id\,\otimes\,(\beta\,\otimes\,\id)}"'] \arrow[d, "{\id\,\otimes\,a}"] \\
    F(v)\otimes((F(w)\otimes m)\otimes n) \arrow[d, shift left = 10, "\com{\eqref{pentA}}" description, color = white]\arrow[d, "{\id\,\otimes\,a}"']& ((F(w)\otimes F(v))\otimes m)\otimes n \arrow[u, "{a\,\otimes\,\id}"] \arrow[rdd, "a"] \arrow[ru, "\com{\eqref{pentA}}" description, color = white]& F(w)\otimes(F(v)\otimes(m\otimes n)) \\
    F(v)\otimes (F(w)\otimes (m\otimes n)) & ((F(v)\otimes F(w))\otimes m)\otimes n \arrow[u, "{(\beta\,\otimes\,\id)\,\otimes\,\id}"] \arrow[luu, pos = 0.4, start anchor = north west, end anchor = south east, "{a\,\otimes\,\id}"] \arrow[ld, "a", start anchor = south west] &\\
    (F(v)\otimes F(w))\otimes (m\otimes n) \arrow[rr, "\com{\nat a}" description, color = white, shorten <= 2cm, shorten >= 2cm, shift left = 8]\arrow[rr, shift right = 10, "\com{\eqref{braidedMonoidalFunctor}}" description, color = white]\arrow[d, "{\varphi_2\,\otimes\,\id}"'] \arrow[u, "a"] \arrow[rr, "{\beta\,\otimes\,\id}"]\arrow[rr,"{=\,\beta^{Z(\SM)}\,\otimes\,\id}"'] && (F(w)\otimes F(v))\otimes (m\otimes n) \arrow[uu, "a"'] \arrow[d, "{\varphi_2\,\otimes\,\id}"] \\
    F(v\otimes w)\otimes (m\otimes n) \arrow[rr, "{F(\beta^\SV)\,\otimes\,\id}"'] && F(w\otimes v)\otimes (m\otimes n) 
    \end{tikzcd}
    \end{equation*}
    which commutes, for all $v,w\in\SV$ and $m,n\in\SM$, due to the reasons indicated in boxes.
\end{itemize}

Second, we show that $\Psi(G,\varphi_2^G,\varphi_0^G,\gamma^G) = (G,\varphi_2^G,\varphi_0^G,s^{\Psi(G)})$ is a module monoidal functor.

Note, that $(G,\varphi_2^G,\varphi_0^G)$ is a monoidal functor by assumption. For $(G,s^{\Psi(G)})$ to be a module functor, we have to check that
\begin{itemize}
    \item $s^{\Psi(G)}$ is a natural isomorphisms, which, as composition of natural isomorphisms, it clearly is,
    \item the pentagon \eqref{sPentagon} and triangle \eqref{sTriangle} from Definition \ref{defModuleFunctor} commute. After resolving the definitions from Construction \ref{conPsi}, \eqref{sPentagon} becomes
    \begin{equation*}
    \begin{tikzcd}
    F'(v\otimes w)\otimes' G(m) \arrow[r, "{\gamma^G\,\otimes'\,\id}"] \arrow[rd, "\com{\eqref{monNatTrafoPhi2}}" description, color = white] & GF(v\otimes w)\otimes' G(m) \arrow[r, "\varphi_2^G"] \arrow[rd, "\com{\nat \varphi_2^G}" description, color = white] & G(F(v\otimes w)\otimes m) \\
    (F'(v)\otimes'F'(w))\otimes'G(m) \arrow[d, "a'"'] \arrow[u, "{\varphi_2^{F'}\,\otimes'\,\id}"] \arrow[rd, "{(\gamma^G\,\otimes'\,\gamma^G)\,\otimes'\,\id}"] \arrow[d, "\com{\nat a'}" description, color = white, shift left = 8]& G(F(v)\otimes F(w))\otimes' G(m) \arrow[r, "\varphi_2^G"'] \arrow[u, "{G(\varphi_2^F)\,\otimes'\,\id}"'] \arrow[rddd, "\com{\eqref{phi2Hexagon}}" description, shorten <= 2cm, shorten >= 2cm, color = white] & G((F(v)\otimes F(w))\otimes m) \arrow[ddd, "G(a)"] \arrow[u, "{G(\varphi_2^F\,\otimes\,\id)}"'] \\
    F'(v)\otimes'(F'(w)\otimes'G(m)) \arrow[d, "{\id\,\otimes'\,(\gamma^G\,\otimes'\,\id)}"'] \arrow[rd, "{\gamma^G\,\otimes'\,(\gamma^G\,\otimes'\,\id)}"]  \arrow[d, "\com{\funct \otimes'}" description, shift left=8, color = white] & (GF(v)\otimes'GF(w))\otimes'G(m) \arrow[d, "a'"] \arrow[u, "{\varphi_2^G\,\otimes'\,\id}"']& \\
    F'(v)\otimes'(GF(w)\otimes'G(m)) \arrow[r, "{\gamma^G\,\otimes'\,\id}"'] \arrow[d, "{\id\,\otimes'\,\varphi_2^G}"'] \arrow[rd, "\com{\funct \otimes'}" description, color = white]& GF(v)\otimes'(GF(w)\otimes'G(m)) \arrow[d, "{\id\,\otimes'\,\varphi_2^G}"] & \\
    F'(v)\otimes'G(F(w)\otimes m) \arrow[r, "{\gamma^G\,\otimes'\,\id}"']& GF(v)\otimes'G(F(w)\otimes m) \arrow[r, "\varphi_2^G"']& G(F(v)\otimes (F(w)\otimes m)) 
    \end{tikzcd}
    \end{equation*}
    which commutes, for all $v,w\in \SV$ and $m\in\SM$, due to the reasons indicated in boxes, and \eqref{sTriangle} becomes
    \begin{equation*}
    \begin{tikzcd}
    F'(\1_\SV)\otimes'G(m) \arrow[r, "{\gamma^G\,\otimes'\,\id}"]\arrow[rd, "\com{\eqref{monNatTrafoPhi0}}" description, color = white]& GF(\1_\SV)\otimes'G(m) \arrow[r, "\varphi_2^G"]\arrow[d, shift left = 30, "\com{\nat \varphi_2^G}" description, color = white] & G(F(\1_\SV)\otimes m)\\
    \1_{\SM'}\otimes'G(m) \arrow[u, "{\varphi_0^{F'}\,\otimes'\,\id}"] \arrow[rd, "l'"'] \arrow[r, "{\varphi_0^G\,\otimes'\,\id}"'] & G(\1_\SM) \otimes' G(m) \arrow[u, "{G(\varphi_0^F)\,\otimes'\,\id}"'] \arrow[r, "\varphi_2^G"'] \arrow[d, "\com{\eqref{phi0l}}" description, color = white]& G(\1_\SM\otimes m) \arrow[ld, "G(l)"] \arrow[u, "{G(\varphi_0^F\,\otimes\,\id)}"'] \\
    & G(m)& 
    \end{tikzcd}
    \end{equation*}
    which commutes, for all $m\in\SM$, due to the reasons indicated in boxes.
\end{itemize}

Further, we have to check that the remaining conditions in form of commutative diagrams from Definition \ref{defUnitalModuleMonoidalFunctor} are satisfied, i.e.\ diagrams \eqref{diagModuleMonoidalFunctorCompAlpha1} and \eqref{diagModuleMonoidalFunctorCompAlpha2} commute. After resolving the definitions from Construction \ref{conPsi}, \eqref{diagModuleMonoidalFunctorCompAlpha1} reduces quite easily to \eqref{phi2Hexagon} via naturality of $a'$ and functoriality of $\otimes'$, while \eqref{diagModuleMonoidalFunctorCompAlpha2} becomes
\begin{equation*}
    {\footnotesize
    \begin{tikzcd}
    G(m)\otimes'(F'(v)\otimes'G(n)) \arrow[ddd, "{\id\,\otimes'\,(\gamma^G\,\otimes'\,\id)}"', bend right, shift right=23] & F'(v)\otimes'(G(m)\otimes'G(n)) \arrow[r, "{\id\,\otimes'\,\varphi_2^G}"] \arrow[rd, "{\gamma^G\,\otimes'\,\id}"]& F'(v)\otimes'G(m\otimes n) \arrow[d, "\com{\funct \otimes'}" description, color = white]\arrow[dd, "{\gamma^G\,\otimes'\,\id}", bend left, shift left=20] \\
    (G(m)\otimes' F'(v))\otimes' G(n) \arrow[d, "\com{\nat a}" description, color = white, shift right = 10] \arrow[u, "a'"'] \arrow[d, "{(\id\,\otimes'\,\gamma^G)\,\otimes'\,\id}"] & (F'(v)\otimes' G(m))\otimes' G(n) \arrow[u, "a'"] \arrow[r, "\com{\nat a}" description, color = white]\arrow[l, "{\beta^{Z(\SM')}\,\otimes'\,\id}"] \arrow[d, "{(\gamma^G\,\otimes'\,\id)\,\otimes'\,\id}"'] & GF(v)\otimes'(G(m)\otimes' G(n)) \arrow[d, "{\id\,\otimes'\,\varphi_2^G}"] \\
    (G(m)\otimes'GF(v))\otimes'G(n) \arrow[d, "a'"] \arrow[dd, "{\varphi_2^G\,\otimes'\,\id}", bend left, shift left=20] & (GF(v)\otimes'G(m))\otimes'G(n)\arrow[dd, shift right = 10, "\com{\eqref{actionCoherenceAndBraiding}}" description, color = white] \arrow[dd, "{\varphi_2^G\,\otimes'\id}"] \arrow[ru, "a'"]& GF(v)\otimes' G(m\otimes n) \arrow[dddd, "\varphi_2^G"]\\
    G(m)\otimes'(GF(v)\otimes'G(n)) \arrow[d, "\com{\eqref{phi2Hexagon}}" description, color = white]\arrow[ddd, "{\id\,\otimes'\,\varphi_2^G}"', bend right, shift right=15] &&\\
    G(m\otimes F(v))\otimes' G(n) \arrow[d, "\varphi_2^G"] & G(F(v)\otimes m)\otimes' G(n) \arrow[d, "\varphi_2^G"] \arrow[l, "{G(\beta^{Z(\SM)})\,\otimes'\,\id}"] \arrow[ruu, "\com{\eqref{phi2Hexagon}}" description, color = white]&\\
    G((m\otimes F(v))\otimes n) \arrow[rd, "G(a)"']\arrow[r, shift left = 5, "\com{\nat \varphi_2^G}" description, color = white]& G((F(v)\otimes m)\otimes n) \arrow[l, "{G(\beta^{Z(\SM)}\,\otimes\,\id)}"] \arrow[rd, "G(a)"]&\\
    G(m)\otimes'G(F(v)\otimes n) \arrow[r, "\varphi_2^G"'] & G(m\otimes(F(v)\otimes n)) & G(F(v)\otimes(m\otimes n))
    \end{tikzcd}}
\end{equation*}
where $\beta^{Z(\SM)}$ (resp.\ $\beta^{Z(\SM')}$) is the half\--braiding (and not the braiding) on $Z(\SM)$ (resp.\ $Z(\SM')$) and which commutes, for all $v\in \SV$ and $m,n\in\SM$, due to the reasons indicated in boxes.

Finally, we show that $\Psi(\eta)$ is a module monoidal natural transformation.

Note, that $\eta$ is a monoidal natural transformation by assumption. Further, after resolving the definitions from Construction \ref{conPsi}, diagram \eqref{monNatTrafoS} becomes
\begin{equation*}
    \begin{tikzcd}
    F'(v)\otimes' G(m) \arrow[dd, "{\id\,\otimes'\,\eta_m}"'] \arrow[r, "{\gamma^G\,\otimes'\,\id}"] \arrow[rd, "{\gamma^{G'}\,\otimes'\,\id}"', bend right = 10] & GF(v)\otimes' G(m) \arrow[d, shift right = 10,"\com{\eqref{condition2MorphismOfPairs}}" description, color = white]\arrow[r, "\varphi_2^G"] \arrow[d, "{\eta_{F(v)}\,\otimes'\,\id}"] \arrow[dd, "{\eta_{F(v)}\,\otimes'\,\eta_m}", bend left, shift left=16] & G(F(v)\otimes m) \arrow[dd, "{\eta_{F(v)\otimes m}}", shift left = 5] \arrow[dd, pos = 0.8, shift right = 8, shorten <=1.5cm, "\com{\eqref{monNatTrafoPhi2}}" description, color = white]\\
    & G'F(v)\otimes' G(m) \arrow[d, shift left = 10, "\com{\funct \otimes'}" description, color = white]\arrow[d, "{\id\,\otimes'\,\eta_m}"'] & \\
    F'(v)\otimes' G'(m) \arrow[ru, "\com{\funct \otimes'}" description, color = white]\arrow[r, "{\gamma^{G'}\,\otimes'\,\id}"'] & G'F(v)\otimes'G'(m) \arrow[r, "\varphi_2^{G'}"']& G'(F(v)\otimes m)
\end{tikzcd}
\end{equation*}
which commutes, for all $v\in\SV$ and $m\in\SM$, due to the reasons indicated in boxes. Hence, $\eta$ is also a module natural transformation and in total a module monoidal natural transformation.
\end{proof}

For now, we have a bifunctor $\Psi\colon \Pairs\longrightarrow\UModMonCat$, but since we want to establish an equivalence of 2\==categories, we need $\Psi$ to be a 2\==functor, i.e. a strict bifunctor. This is ensured by the following statement, which is easy but lengthy to prove and therefore not proven in this paper.

\begin{Proposition}\label{propPsiIsStrict}
    The bifunctor $\Psi\colon\Pairs\longrightarrow\UModMonCat$ from Construction \ref{conPsi} is strict, i.e.\ it is a 2\==functor.
\end{Proposition}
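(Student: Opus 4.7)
To show $\Psi$ is strict, I must verify three things: $\Psi$ sends identity 1\==morphisms to identities, $\Psi$ preserves the composition of 1\==morphisms on the nose, and $\Psi$ strictly preserves both vertical and horizontal composition of 2\==morphisms. The monoidal data $(G,\varphi_2^G,\varphi_0^G)$ is untouched by $\Psi$, so every assertion about the monoidal part follows from the well\==known fact that the obvious forgetful construction from pairs to monoidal functors is a strict 2\==functor, and likewise the assertion $\Psi(\eta)=\eta$ on 2\==morphisms trivially commutes with vertical and horizontal pasting. Thus all the content is located in the module functor constraint $s^{\Psi(G)}$.

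For the identity $\Id_{(\ST,F)}\colon(\ST,F)\longrightarrow(\ST,F)$ we have $\varphi_2^{\Id}=\id$ and $\gamma^{\Id}=\id$. Substituting into the formula from Construction \ref{conPsi} gives
\[
s^{\Psi(\Id)}_{v,m}=\varphi_{2,F(v),m}^{\Id}\circ(\gamma_v^{\Id}\otimes\id_m)=\id_{F(v)\otimes m},
\]
so $\Psi(\Id_{(\ST,F)})=\Id_{\Psi(\ST,F)}$ coincides with the identity module monoidal functor on the nose.

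For composition, let $G\colon(\ST,F)\longrightarrow(\ST',F')$ and $G'\colon(\ST',F')\longrightarrow(\ST'',F'')$ and set $H\coloneqq G'G$. Using the formulas \eqref{composPhi2} for $\varphi_2^H$ and \eqref{composGamma} for $\gamma^H$ from the composition in $\Pairs$, unfolding gives
\[
s^{\Psi(H)}_{v,m}=G'(\varphi_{2,F(v),m}^G)\circ\varphi_{2,GF(v),G(m)}^{G'}\circ\bigl((G'(\gamma_v^G)\circ\gamma_v^{G'})\otimes''\id_{G'G(m)}\bigr).
\]
On the other hand, applying the composition formula \eqref{composS} in $\UModMonCat$ to the module monoidal functors $\Psi(G')$ and $\Psi(G)$ yields
\[
s^{\Psi(G')\Psi(G)}_{v,m}=G'(\varphi_{2,F(v),m}^G)\circ G'\bigl(\gamma_v^G\otimes'\id_{G(m)}\bigr)\circ\varphi_{2,F'(v),G(m)}^{G'}\circ\bigl(\gamma_v^{G'}\otimes''\id_{G'G(m)}\bigr).
\]
Thus the equality $s^{\Psi(H)}=s^{\Psi(G')\Psi(G)}$ reduces to the commutativity of a small square whose left/bottom path is the right\==hand side and top/right path the left\==hand side; this square is the pasting of two familiar pieces, namely functoriality of $\otimes''$ applied to $G'(\gamma_v^G)\circ\gamma_v^{G'}$ on $\id_{G'G(m)}$, and naturality of $\varphi_2^{G'}$ along the morphism $\gamma_v^G\colon F'(v)\longrightarrow GF(v)$ in the first slot. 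Both are automatic, so the composition is preserved strictly.

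The only step requiring care is the unfolding above, since one must be consistent about whether a given $\otimes$ denotes the tensor product in $\ST$, $\ST'$ or $\ST''$ and similarly track the three module actions; however, once the definitions are transcribed no further axioms of pairs or module monoidal categories are invoked. Combining these three checks gives a strict bifunctor, i.e.\ a 2\==functor.
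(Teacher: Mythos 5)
The paper does not actually supply a proof of this proposition (it is stated as ``easy but lengthy'' and omitted), so there is nothing to compare against; judged on its own, your argument is correct and complete. You correctly isolate the only nontrivial check, namely that the module constraint of the composite satisfies $s^{\Psi(G'G)}=s^{\Psi(G')\Psi(G)}$: after cancelling the common factor $G'(\varphi_{2,F(v),m}^G)$, the remaining identity
\[
\varphi_{2,GF(v),G(m)}^{G'}\circ\bigl((G'(\gamma_v^G)\circ\gamma_v^{G'})\otimes''\id\bigr)
= G'(\gamma_v^G\otimes'\id)\circ\varphi_{2,F'(v),G(m)}^{G'}\circ(\gamma_v^{G'}\otimes''\id)
\]
is exactly functoriality of $\otimes''$ followed by naturality of $\varphi_2^{G'}$ in its first argument along $\gamma_v^G$, as you say. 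The identity check and the statements about the monoidal part and the 2\==morphisms are indeed immediate because $\Psi$ leaves those data untouched and the composition formulas \eqref{composPhi2}, \eqref{composPhi0} are shared by $\Pairs$ and $\UModMonCat$. This is the expected proof of the omitted proposition.
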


\subsection{Equivalence on objects}\label{secEquivalenceObjects}
As discussed in the introduction of Section \ref{secEquivalence}, for the 2\==functor $\Psi\colon\Pairs\longrightarrow\UModMonCat$ from Section \ref{secEquivalenceConstructionPsi} to be an equivalence we first need that it is essentially surjective on objects, which is implied by the statement that equivalence classes of $\Pairs$ and $\UModMonCat$ are in bijection. In fact, we prove in Theorem \ref{thmEquivalenceUnitalModuleMonoidalCategoriesAndPairsOnObjects} a slightly stronger version of this, namely we see that so\--called $\Id$\==equivalence classes of objects in $\Pairs$ and $\UModMonCat$ are in bijection.

\begin{Definition}\label{defIdEquivalence}
Given an equivalence $F$ of module monoidal categories (resp.\ pairs), we call it an \emph{$\Id$\==equivalence}, if the underlying functor of $F$ is the identity. We call two module monoidal categories (resp.\ pairs) \emph{$\Id$\==equivalent}, if there exists an $\Id$\==equivalence between them.
\end{Definition}
 
\begin{Theorem}\label{thmEquivalenceUnitalModuleMonoidalCategoriesAndPairsOnObjects}
Let $\SV$ be a braided monoidal category. There is a bijection
\[\left\{\begin{array}{l}\text{$\Id$\==equivalence classes}\\\text{of pairs }(\ST,F)\text{ over }\SV\end{array}\right\}\longleftrightarrow\left\{\begin{array}{l}\text{$\Id$\==equivalence classes of unital}\\\text{module monoidal categories }\SM\text{ over }\SV\end{array}\right\}.\]
\end{Theorem}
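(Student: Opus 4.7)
The plan is to leverage the 2-functor $\Psi\colon\Pairs\longrightarrow\UModMonCat$ from Construction \ref{conPsi}. Because $\Psi$ leaves the underlying monoidal category of an object untouched and $\Psi(G)$ has the same underlying functor as $G$ on 1-morphisms, it sends $\Id$-equivalences of pairs to $\Id$-equivalences of unital module monoidal categories, so the assignment $[(\ST,F)]\longmapsto[\Psi(\ST,F)]$ is well-defined on $\Id$-equivalence classes. To produce an inverse, I would give an explicit construction $\Phi$ of a pair from a module monoidal category on the nose and verify $\Psi\Phi\simeq\Id$ and $\Phi\Psi\simeq\Id$ up to $\Id$-equivalence.

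Given $\SM=(\SM,\otimes,\1_\SM,a,l,r,\rhd,m,l^\rhd,\alp1,\alp2)$, set $\Phi(\SM):=(\ST,F)$ with $\ST:=(\SM,\otimes,\1_\SM,a,l,r)$, $F(v):=v\rhd\1_\SM$, and
\begin{align*}
\varphi_2^F &:= m_{v,w,\1_\SM}^{-1}\circ(\id_v\rhd l_{F(w)})\circ\alp1_{v,\1_\SM,F(w)},\\
\varphi_0^F &:= (l^\rhd_{\1_\SM})^{-1},\\
\beta_{F(v),x} &:= (\alp2_{v,x,\1_\SM})^{-1}\circ(\id_v\rhd(r_x^{-1}\circ l_x))\circ\alp1_{v,\1_\SM,x}.
\end{align*}
The verification is in three steps: $(F,\varphi_2^F,\varphi_0^F)$ is a unital monoidal functor (using \eqref{a1a}, \eqref{a1m}, \eqref{a1r} and \eqref{a1lm}); $\beta_{F(v),-}$ satisfies the half-braiding hexagon \eqref{defDrinfeldHalfBraiding} and is compatible with $\otimes$ and $\1_\ST$ as in \eqref{defDrinfeldTensor}, \eqref{defDrinfeldUnitMorphism} (using \eqref{a2a}, \eqref{a1a2} and Propositions \ref{propPropertiesBraidingAndUnit}, \ref{propFurtherAxiomsUnitalModuleMonoidalCategory}); and finally the lift $F^Z\colon\SV\longrightarrow Z(\ST)$ is braided monoidal, which is exactly the content of \eqref{brac} once $\beta^{Z(\ST)}$ is unpacked.

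For $\Psi\Phi(\SM)\cong\SM$ as an $\Id$-equivalence, both sides carry the same underlying monoidal category, and on $\Psi\Phi(\SM)$ the action is $v\rhd'm=(v\rhd\1_\SM)\otimes m$. The natural isomorphism
\[
(v\rhd\1_\SM)\otimes m\xrightarrow{\alp1_{v,\1_\SM,m}}v\rhd(\1_\SM\otimes m)\xrightarrow{\id_v\rhd l_m}v\rhd m
\]
serves as the module functor constraint for the identity on $\SM$; one checks via \eqref{a1a}, \eqref{a1m}, \eqref{a1a2} and Proposition \ref{propFurtherAxiomsUnitalModuleMonoidalCategory} that it transports $m',l^{\rhd'},\alpd1,\alpd2$ back to $m,l^\rhd,\alp1,\alp2$. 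Conversely, for $\Phi\Psi(\ST,F)\cong(\ST,F)$, the functor arising from $\Phi\Psi$ is $v\mapsto F(v)\otimes\1_\ST$, and the right unit constraint $r_{F(v)}$ provides the action coherence $\gamma$ for the identity on $\ST$; its compatibility with the half-braiding and monoidal structure reduces to \eqref{triangleA} and \eqref{aR}.

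The main obstacle is the second step in the construction of $\Phi$: establishing the half-braiding hexagon and the braided monoidality of $F^Z$ requires threading \eqref{a1a}, \eqref{a2a}, \eqref{a1a2} and \eqref{brac} through a sizeable diagram. However, the axioms of Definition \ref{defNonUnitalModuleMonoidalCategory} were tailored precisely for this purpose, so the computations, while lengthy, are procedural and guided directly by the existing coherence diagrams rather than requiring new input.
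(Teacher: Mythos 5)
Your proposal is correct and follows essentially the same route as the paper: your $\Phi$ is precisely the construction of Lemma \ref{lemUnitalModuleMonoidalCategoryToPair} (your $\varphi_2^F$ agrees with the paper's formula via \eqref{a2l}, $l_{\1}=r_{\1}$ and naturality of $m$), and your comparison isomorphisms $(\id_v\rhd l_m)\circ\alp1_{v,\1_\SM,m}$ and $\gamma_v=r_{F(v)}$ are exactly the ones used in Parts 1 and 2 of the paper's proof. The remaining work you flag as ``procedural'' is indeed where the paper spends its effort (Appendix \ref{secAppendixProofLemmaUnitalModuleMonoidalCategoryToPair} and the two parts of the proof of Theorem \ref{thmEquivalenceUnitalModuleMonoidalCategoriesAndPairsOnObjects}).
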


Before we prove this theorem, we need a concept how to construct a pair out of a module monoidal category.

\begin{Lemma}\label{lemUnitalModuleMonoidalCategoryToPair}
Given a module monoidal category $(\SM,\otimes, \1_\SM, a, l, r, \rhd, m, l^\rhd, \alp1, \alp2)$, we construct a pair $(\ST,F)$ as we set $\ST\coloneqq \SM$ as monoidal categories and define a braided central monoidal functor $(F,F^Z)\colon \SV\longrightarrow \ST$ (cf.\ Def.\,\ref{defBraidedCentralFunctor}) as follows.
\begin{itemize}
    \item The lift $F^Z\colon\SV\longrightarrow Z(\ST)$ is given by
    \begin{align*}
        F^Z\colon \SV & \longrightarrow Z(\ST),\\
        v&\longmapsto (F(v)\coloneqq v\rhd \1_\ST,\beta_{F(v),-}),\\
        \left[v\overset{f}{\longrightarrow} v'\right]&\longmapsto \left[F(v)\xrightarrow[]{F(f)}F(v')\right]\coloneqq\left[v\rhd \1_\ST\xrightarrow[]{f\,\rhd\,\id_{\1_\ST}} v'\rhd \1_\ST\right],
    \end{align*}
    where the half\--braiding $\beta_{F(v),-}$ on the object $F(v)$ with components
    \[\beta_{F(v),t}\colon\begin{array}{l}F(v)\otimes t\\=(v\rhd\1_\ST)\otimes t\end{array}\longrightarrow\begin{array}{l}t\otimes F(v)\\=t\otimes (v\rhd\1_\ST)\end{array}\]
    is given by
    \[\beta_{F(v),t}\coloneqq (\alp2_{v,t,\1_\ST})^{-1}\circ (\id_v\rhd\beta_{\1_\ST,t})\circ\alp1_{v,\1_\ST,t}\]
    for all $t\in\ST$.
    \item The monoidal structure on $F^Z$ is given by
    \[\varphi_2\colon \begin{array}{l}F(v)\otimes F(w)\\=(v\rhd\1_\ST)\otimes( w\rhd\1_\ST)\end{array}\longrightarrow \begin{array}{l}F(v\otimes w)\\=(v\otimes w)\rhd\1_\ST\end{array}\]
    defined as
    \[\varphi_{2,v,w}\coloneqq (\id_{v\otimes w}\rhd r_{\1_\ST})\circ m^{-1}_{v,w,\1_\ST\otimes\1_\ST}\circ (\id_v\rhd\alp2_{\1_\ST,w,\1_\ST})\circ \alp1_{v,\1_\ST,w\rhd\1_\ST}\]
    for all $v,w\in\SV$, and
    \[\varphi_0\colon \1_\ST\longrightarrow \begin{array}{l}F(\1_\SV)\\=\1_\SV\rhd\1_\ST\end{array}\]
    defined as
    \[\varphi_0\coloneqq (l^\rhd_{1_\ST})^{-1}.\]
\end{itemize}
\end{Lemma}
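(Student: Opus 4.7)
The plan is to verify, one structural component at a time, that $(\ST,F)$ as constructed is indeed a pair. Concretely, this amounts to checking: (a) each $F^Z(v) = (v \rhd \1_\ST,\, \beta_{F(v),-})$ is a well-defined object of $Z(\ST)$; (b) $F^Z$ is a functor and each $F^Z(f)$ is a morphism in $Z(\ST)$; (c) $(F^Z,\varphi_2,\varphi_0)$ is a monoidal functor; and (d) $F^Z$ is braided, i.e.\ \eqref{braidedMonoidalFunctor} holds.

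For (a), naturality of $\beta_{F(v),-}$ in $t$ is a short chase using naturality of $\alp1$, $\alp2$, and of the unit half-braiding $\beta_{\1_\ST,-}$, while invertibility is immediate. The half-braiding hexagon \eqref{defDrinfeldHalfBraiding} is the first real computation: expanding $\beta_{F(v),x\otimes y}$, $\beta_{F(v),x}$ and $\beta_{F(v),y}$ by their definitions reduces the claim, after inserting several instances of associator naturality, to a combination of \eqref{a1a2} (swapping $\alp1$ past $\alp2$), the pentagons \eqref{a1a} and \eqref{a2a}, and the fact that $\beta_{\1_\ST,-}$ is itself a half-braiding on $\1_\ST$ in $Z(\ST)$. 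Part (b) is then immediate: functoriality comes from functoriality of $\rhd$, and \eqref{defDrinfeldMorphism} for $F^Z(f) = f \rhd \id_{\1_\ST}$ follows from naturality of $\alp1$ and $\alp2$ in the $\SV$-slot.

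For (c), $\varphi_2$ and $\varphi_0$ are natural isomorphisms as composites of such; one must then verify that they are morphisms in $Z(\ST)$ (compatibility with half-braidings) and check the monoidal hexagon \eqref{phi2Hexagon} and the unit constraints \eqref{phi0l}, \eqref{phi0r}. These are the bulkiest diagram chases: the hexagon reduces using \eqref{pentM}, \eqref{a1m}, \eqref{a2m}, \eqref{a1a2}, and naturality of $\alp1, \alp2, m$; the unit conditions reduce to the secondary triangles \eqref{a1lm}, \eqref{a2lm} of Proposition \ref{propFurtherAxiomsNonUnitalModuleMonoidalCategory} together with \eqref{a1r}, \eqref{a2l} of Proposition \ref{propFurtherAxiomsUnitalModuleMonoidalCategory}, after collapsing auxiliary occurrences of $\1_\ST$ using \eqref{triangleM} and \eqref{aR}.

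For (d), the braided axiom \eqref{braidedMonoidalFunctor} is where the core module-monoidal coherence \eqref{brac} enters: \eqref{brac} encodes exactly how $\beta^\SV$ interacts with $\alp1$ and $\alp2$, and when instantiated at $m = n = \1_\ST$ and post-composed with $\id_{w\otimes v} \rhd r_{\1_\ST}$ it becomes precisely the required identity relating $\varphi_{2,w,v} \circ \beta_{F(v),F(w)}$ with $F^Z(\beta^\SV_{v,w}) \circ \varphi_{2,v,w}$. I expect the principal obstacle throughout (c) and (d) to be the bookkeeping rather than any conceptual difficulty: the definitions of $\beta_{F(v),-}$ and of $\varphi_2$ each carry multiple unit insertions, associators, and module associativity constraints, so the real work is to organise each diagram so that the ``unit padding'' collapses via \eqref{triangleM}, \eqref{triangleA}, and \eqref{aR}, leaving a core subdiagram whose commutativity is one clean invocation of the appropriate module-monoidal axiom.
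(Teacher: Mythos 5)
Your proposal is correct and follows essentially the same route as the paper's own proof in Appendix~\ref{secAppendixProofLemmaUnitalModuleMonoidalCategoryToPair}: a component-by-component verification (object of $Z(\ST)$ via \eqref{defDrinfeldHalfBraiding} using \eqref{a1a}, \eqref{a2a}, \eqref{a1a2}; functoriality and \eqref{defDrinfeldMorphism} via naturality of $\alp1,\alp2$; monoidality via \eqref{pentM}, \eqref{a1m}, \eqref{a2m}, \eqref{a1lm}, \eqref{a2lm}, \eqref{a1r}, \eqref{a2l}; braidedness via \eqref{brac} with unit padding collapsed). The key axioms you identify at each stage match those invoked in the paper's diagrams, so the only remaining work is the bookkeeping you already anticipate.
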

The proof of this lemma is quite lengthy since it involves many large to huge commutative diagrams to be checked. Therefore, we defer its proof to Appendix \ref{secAppendixProofLemmaUnitalModuleMonoidalCategoryToPair}.

\begin{proof}[Proof of Theorem \ref{thmEquivalenceUnitalModuleMonoidalCategoriesAndPairsOnObjects}]
We divide the proof of this theorem into two parts.

Part 1: Given a module monoidal category $(\SM,\otimes,\1_\SM,a,l,r,\rhd,m,l^\rhd,\alp1,\alp2)$, first applying Lemma \ref{lemUnitalModuleMonoidalCategoryToPair} and then applying the 2\==functor $\Psi$ from Section \ref{secEquivalenceConstructionPsi}, we end up with a module monoidal category equivalent to the original one as module monoidal categories in the sense of Definition \ref{defEquivalenceUnitalModuleMonoidalCategories}.

Part 2: Given a pair $(\ST,F)$, first applying the 2\==functor $\Psi$ from Section \ref{secEquivalenceConstructionPsi} and then applying Lemma \ref{lemUnitalModuleMonoidalCategoryToPair}, we end up with a pair equivalent to the original one as pairs in the sense of Definition \ref{defEquivalencePairs}.\bigskip

{\bfseries Part 1}

Given a module monoidal category $(\SM,\otimes,\1_\SM,a,l,r,\rhd,m,l^\rhd,\alp1,\alp2)$, applying Lemma \ref{lemUnitalModuleMonoidalCategoryToPair} yields a pair $(\ST,F)$ and thereafter applying the 2\== functor $\Psi$ from Section \ref{secEquivalenceConstructionPsi}, we end up with another module monoidal category, denoted by $(\SM',\otimes',\1_\SM',a',l',r',\rhd',m',l^{\rhd'},\alpd1,\alpd2)$, where
\begin{itemize}
    \item $\SM=\SM'$ as monoidal categories, i.e.\ $(\SM,\otimes,\1_\SM,a,l,r)=(\SM',\otimes',\1_\SM',a',l',r')$,
    \item the action $\rhd'$ is given by
    \begin{align*}
        v\rhd'm&\overset{\ref{conPsi}}{=}F(v)\otimes m\\
        &\overset{\ref{lemUnitalModuleMonoidalCategoryToPair}}{=}(v\rhd\1_\SM)\otimes m,\\
        \left[v\rhd'm\xrightarrow[]{f\rhd'g}w\rhd'n\right]&\overset{\ref{conPsi}}{=}\left[F(v)\otimes m\xrightarrow[]{F(f)\otimes g}F(w)\otimes n\right]\\
        &\overset{\ref{lemUnitalModuleMonoidalCategoryToPair}}{=}\left[(v\rhd\1_\SM)\otimes m\xrightarrow[]{(f\rhd\id_{\1_\SM})\otimes g}(w\rhd\1_\SM)\otimes n\right]
    \end{align*}
    for all $v,w\in\SV$, $m,n\in\SM$, $f\in\SV(v,w)$ and $g\in\SM(m,n)$,
    \item the module associativity $m'$ has, for all $v,w\in\SV$ and $m\in\SM$, components
    \begin{align*}
    m'_{v,w,m}&=a_{v\rhd\1_\SM,w\rhd\1_\SM,m}\circ ((\alp1_{v,\1_\SM,w\rhd\1_\SM})^{-1}\otimes\id_m) \circ ((\id_v\rhd(\alp2_{w,\1_\SM,\1_\SM})^{-1})\otimes\id_m)\\
    &\phantom{\,=\,}\circ (m_{v,w,\1_\SM\otimes\1_\SM}\otimes\id_m) \circ ((\id_{v\otimes w}\rhd r^{-1}_{\1_\SM})\otimes\id_m) 
    \end{align*}
    since
    \begin{equation*}
    \begin{tikzcd}[column sep = huge]
    \begin{array}{l}(v\otimes w)\rhd'm\\=F(v\otimes w)\otimes m\\=((v\otimes w)\rhd \1_\SM)\otimes m\end{array} \arrow[r, "{m'}"] \arrow[rd, shift left = 5, "\com{1}" description, color = white] & \begin{array}{l}v\rhd'(w\rhd'm)\\=F(v)\otimes(F(w)\otimes m)\\=(v\rhd\1_\SM)\otimes((w\rhd\1_\SM)\otimes m)\end{array} \\
    ((v\otimes w)\rhd(\1_\SM\otimes\1_\SM))\otimes m \arrow[r, "\com{2}" description, color = white] \arrow[u, "{(\id\,\rhd\,r)\,\otimes\,\id}"] \arrow[d, "{m\,\otimes\,\id}"'] & \begin{array}{l}(F(v)\otimes F(w))\otimes m\\=((v\rhd\1_\SM)\otimes(w\rhd\1_\SM))\otimes m\end{array} \arrow[lu, "\varphi_2\,\otimes\,\id"] \arrow[u, "a"'] \arrow[d, "{\alp1\,\otimes\,\id}"'] \\
    (v\rhd(w\rhd(\1_\SM\otimes\1_\SM)))\otimes m   & (v\rhd(\1_\SM\otimes(w\rhd\1_\SM)))\otimes m \arrow[l, "{(\id\,\rhd\,\alp2)\,\otimes\,\id}"]
    \end{tikzcd}
    \end{equation*}
    commutes as \fbox{1} is the definition of $m'$ from Construction \ref{conPsi} and \fbox{2} is the definition of $\varphi_2$ from Lemma \ref{lemUnitalModuleMonoidalCategoryToPair},
    \item the module unit constraint $l^{\rhd'}$ has, for all $m\in\SM$, components
    \[l^{\rhd'}_m=l_m\circ (l^\rhd_{\1_\SM}\otimes\id_m)\]
    since
    \begin{equation*}
    \begin{tikzcd}
    \begin{array}{l}\1_\SV\rhd'm\\=F(\1_\SV)\otimes m\\=(\1_\SV\rhd\1_\SM)\otimes m\end{array} \arrow[rr, shift right = 5, "\com{1}" description, color = white, shorten >= 1cm]\arrow[rr, "l^{\rhd'}"] \arrow[rd, "{l^\rhd\,\otimes\,\id}"', bend right] & & m \\
    & \1_\SM\otimes m \arrow[ru, "l"'] \arrow[lu, shorten <=1cm, pos = 0.8, shift left = 5, "\com{2}" description, color = white] \arrow[lu, "{\varphi_0\,\otimes\,\id}"'] &
    \end{tikzcd}
    \end{equation*}
    commutes as \fbox{1} is the definition of $l^{\rhd'}$ from Construction \ref{conPsi} and \fbox{2} is the definition of $\varphi_0$ from Lemma \ref{lemUnitalModuleMonoidalCategoryToPair},
    \item the natural isomorphism $\alpd1$ has, for all $v\in\SV$ and $m,n\in\SM$, components
    \[\alpd1_{v,m,n}=a_{v\rhd\1_\SM,m,n}\]
    since
    \begin{equation*}
    \begin{tikzcd}
    \begin{array}{l}(v\rhd'm)\otimes n\\=(F(v)\otimes m)\otimes n\\=((v\rhd\1_\SM)\otimes m)\otimes n\end{array} \arrow[r, "{\begin{array}{l}\alp1\\\overset{\ref{conPsi}}{=}\,a\\\overset{\ref{lemUnitalModuleMonoidalCategoryToPair}}{=}\,a\end{array}}"] & \begin{array}{l}v\rhd'(m\otimes n)\\=F(v)\otimes(m\otimes n)\\=(v\rhd\1_\SM)\otimes (m\otimes n)\end{array}
    \end{tikzcd}
    \end{equation*}
    holds and
    \item the natural isomorphism $\alpd2$ has, for all $v\in\SV$ and $m,n\in\SM$, components
    \begin{align*}
    \alpd2_{v,m,n}&=a_{v\rhd\1_\SM,m,n}\circ ((\alp1_{v,\1_\SM,m})^{-1}\otimes \id_n)\circ ((\id_v\rhd\beta^{-1}_{\1_\SM,m})\otimes\id_n)\\
    &\phantom{\,=\,}\circ (\alp2_{v,m,\1_\SM}\otimes\id_n)\circ a_{m,v\rhd\1_\SM,n}^{-1}
    \end{align*}
    since
    \begin{equation*}
    \begin{tikzcd}
    \begin{array}{l}m\otimes(v\rhd' n)\\=m\otimes (F(v)\otimes n)\\=m\otimes ((v\rhd\1_\SM)\otimes n)\end{array} \arrow[rr, "\alpd2"]&& \begin{array}{l}v\rhd'(m\otimes n)\\=F(v)\otimes (m\otimes n)\\=(v\rhd\1_\SM)\otimes (m\otimes n)\end{array}\\
    \begin{array}{l}(m\otimes F(v))\otimes n\\=(m\otimes (v\rhd\1_\SM))\otimes n\end{array} \arrow[u, "a"] \arrow[d, "{\alp2\,\otimes\,\id}"']  \arrow[rr, shift left = 13, "\com{1}" description, color = white]\arrow[rr, shift right = 9, "\com{2}" description, color = white]&& \begin{array}{l}(F(v)\otimes m)\otimes n\\=((v\rhd\1_\SM)\otimes m)\otimes n\end{array} \arrow[u, "a"'] \arrow[d, "{\alp1\,\otimes\,\id}"] \arrow[ll, "{\beta_{F(v),m}\,\otimes\,\id}"'] \\
    (v\rhd(m\otimes\1_\SM))\otimes n && (v\rhd(\1_\SM\otimes m))\otimes n \arrow[ll, "{(\id\,\rhd\,\beta_{\1_\SM,m})\,\otimes\,\id}"] 
    \end{tikzcd}
    \end{equation*}
    commutes as \fbox{1} is the definition of $\alpd2$ from Construction \ref{conPsi} and \fbox{2} is the definition of $\beta_{F(v),m}$ from Lemma \ref{lemUnitalModuleMonoidalCategoryToPair}.
\end{itemize}
Now, we define a module monoidal functor $(G,\varphi_2,\varphi_0,s)\colon\SM\longrightarrow\SM'$ with
\[(G,\varphi_2,\varphi_0)=(\Id,\id,\id)\]
and
\begin{align*}
    s_{v,m}\colon \begin{array}{l}v\rhd'G(m)\\=(v\rhd\1_\SM)\otimes m\end{array}&\longrightarrow \begin{array}{l}G(v\rhd m)\\=v\rhd m\end{array}
\end{align*}
given by
\begin{align*}
    s_{v,m}&\coloneqq (\id_v\rhd l_m)\circ \alp1_{v,\1_\SM,m}.
\end{align*}

Indeed, $G$ is a module monoidal functor, since
\begin{itemize}
    \item $(G,\varphi_2,\varphi_0)=(\Id,\id,\id)\colon\SM\longrightarrow\SM'$ is obviously a monoidal functor and
    \item $(G,s)$ is a module functor, since
    \begin{itemize}
        \item $s$ is as a composition of natural isomorphisms again a natural isomorphism,
        \item the pentagon \eqref{sPentagon}, after resolving the definitions from above, becomes
        \begin{equation*}
        \hspace{-1cm}
        {\footnotesize\begin{tikzcd}[column sep = huge]
        ((v\otimes w)\rhd\1_\SM)\otimes m\arrow[r, "\alp1"] \arrow[rd, "\com{\nat \alp1}" description, color = white]& (v\otimes w)\rhd(\1_\SM\otimes m) \arrow[r, "{\id\,\rhd\,l}"]  \arrow[rdddddddd, bend left = 40, start anchor = south east, end anchor = north east, "m"] \arrow[r, "\com{\nat m}" description, shift right=7, shorten <= 1cm, color = white, pos = 1]& (v\otimes w)\rhd m \arrow[ddddddddd, "m", bend left=15, shift left = 18]\\
        ((v\otimes w)\rhd(\1_\SM\otimes\1_\SM))\otimes m \arrow[u, "{\begin{array}{l}(\id\,\rhd\,l)\,\otimes\,\id\\=\,(\id\,\rhd\,r)\,\otimes\,\id\end{array}}"] \arrow[r, "\alp1"'] \arrow[dd, "{m\,\otimes\,\id}"'] \arrow[rdd, "\com{\eqref{a1m}}" description, color = white] & (v\otimes w)\rhd((\1_\SM\otimes\1_\SM)\otimes m)\arrow[u, "{\id\,\rhd\,(l\,\otimes\,\id)}"] \arrow[d, "m"']\arrow[d, "\com{\nat m}" description, shift left=20, color = white]& \\
        & v\rhd(w\rhd((\1_\SM\otimes\1_\SM)\otimes m)) \arrow[dddddd, "{\id\,\rhd\,(\id\,\rhd\,a)}"', bend left, shift left = 20] \arrow[rdddddd, "\begin{array}{l}\id\,\rhd\,(\id \,\rhd\,(l_{\1_\SM}\,\otimes\,\id))\\=\id\,\rhd\,(\id\,\rhd\,(r_{\1_\SM}\,\otimes\,\id))\end{array}", start anchor = south east, bend right = 10]& \\
        (v\rhd(w\rhd(\1_\SM\otimes\1_\SM)))\otimes m \arrow[r, "\alp1"] \arrow[rd, "\com{\nat \alp1}" description, color = white]& v\rhd((w\rhd(\1_\SM\otimes\1_\SM))\otimes m) \arrow[u, "{\id\,\rhd\,\alp1}"] \arrow[d, "\com{\eqref{a1a2}}" description, shift left=13, color = white]& \\
        (v\rhd(\1_\SM\otimes(w\rhd\1_\SM)))\otimes m\arrow[u, "{(\id\,\rhd\,\alp2)\,\otimes\,\id}"] \arrow[r, "\alp1"'] \arrow[rdd, "\com{\eqref{a1a}}" description, color = white] & v\rhd((\1_\SM\otimes(w\rhd\1_\SM))\otimes m) \arrow[u, "{\id\,\rhd\,(\alp2\,\otimes\,\id)}"] \arrow[dd, "{\id\,\rhd\,a}"']  & \\
        ((v\rhd\1_\SM)\otimes(w\rhd\1_\SM))\otimes m \arrow[u, "{\alp1\,\otimes\,\id}"] \arrow[d, "a"'] & & \\
        (v\rhd\1_\SM)\otimes ((w\rhd \1_\SM)\otimes m) \arrow[r, "\alp1"] \arrow[d, "{\id\,\otimes\,\alp1}"'] \arrow[rd, "\com{\nat \alp1}" description, color = white]& v\rhd(\1_\SM\otimes((w\rhd\1_\SM)\otimes m)) \arrow[d, "{\id\,\rhd\,(\id\,\otimes\,\alp1)}"]& \\
        (v\rhd\1_\SM)\otimes(w\rhd(\1_\SM\otimes m)) \arrow[r, "\alp1"'] \arrow[dd, "{\id\,\otimes\,(\id\,\rhd\,l)}"'] \arrow[dd, "\com{\nat \alp1}" description, shift left = 10, color = white] & v\rhd(\1_\SM\otimes(w\rhd(\1_\SM\otimes m))) \arrow[dd, "{\id\,\rhd\,(\id\,\otimes\,(\id\,\rhd\,l))}"', bend right, shift right = 23] \arrow[d, "{\id\,\rhd\,\alp2}"] & \\
        & v\rhd(w\rhd(\1_\SM\otimes(\1_\SM\otimes m))) \arrow[r, "{\id\,\rhd\,(\id\,\rhd\,(\id\,\otimes\,l))}"] \arrow[d, "\com{\nat \alp2}" description, color = white] \arrow[r, "\com{\eqref{triangleA}}" description, shift left=10, color = white] & v\rhd(w\rhd(\1_\SM\otimes m)) \arrow[d, "{\id\,\rhd\,(\id\,\rhd\,l)}"] \arrow[d] \arrow[d, "\com{\eqref{a2l}}" description, shift right=11, pos = 0.8, color = white] \\
        (v\rhd\1_\SM)\otimes (w\rhd m) \arrow[r, "\alp1"']& v\rhd(\1_\SM\otimes(w\rhd m)) \arrow[r, "{\id\,\rhd\,l}"'] \arrow[ru, "{\id\,\rhd\,\alp2}"] & v\rhd(w\rhd m) 
        \end{tikzcd}}
        \end{equation*}
        which commutes, for all $v,w\in\SV$ and $m\in\SM$, due to the reasons indicated in boxes,
        \item the triangle \eqref{sTriangle} reduces to \eqref{a1lm} for $\SM$ via naturality of $l^\rhd$.
    \end{itemize}
    \item condition \eqref{diagModuleMonoidalFunctorCompAlpha1} reduces quite easily to \eqref{a1a} for $\SM$ via naturality of $\alp1$ and \eqref{aL}, and
    \item condition \eqref{diagModuleMonoidalFunctorCompAlpha2}, after resolving the definitions from above, becomes
    \begin{equation*}
        \begin{tikzcd}
        m\otimes((v\rhd \1_\SM)\otimes n) \arrow[dddd, shift left = 20, "\com{\eqref{a1a2}}" description, color = white]\arrow[dddd, "{\id\,\otimes\,\alp1}"'] & & (v\rhd\1_\SM)\otimes(m\otimes n) \arrow[dddd, "\alp1", bend left, shift left=12]\\
        & (m\otimes(v\rhd\1_\SM))\otimes n \arrow[lu, "a"] \arrow[d, "{\alp2\,\otimes\,\id}"']& ((v\rhd\1_\SM)\otimes m)\otimes n \arrow[d, "{\alp1\,\otimes\,\id}"'] \arrow[u, "a"]\arrow[d, shift left = 10, "\com{\eqref{a1a}}" description, color = white] \\
        & (v\rhd(m\otimes\1_\SM))\otimes n \arrow[d, "\alp1"']\arrow[rd, "\com{\nat \alp1}" description, color = white]& (v\rhd(\1_\SM\otimes m))\otimes n \arrow[l, "{(\id\,\rhd\,\beta)\,\otimes\,\id}"'] \arrow[d, "\alp1"']\\
        & v\rhd((m\otimes\1_\SM)\otimes n) \arrow[rd, shift right = 4, shorten <=1cm, shorten >=1cm, "\com{2}" description, color = white]\arrow[d, shift left = 4, "\com{1}" description, color = white]\arrow[rdd, "{\id\,\rhd\,(r\,\otimes\,\id)}", pos = 0.2, end anchor = north west] \arrow[d, "{\id\,\rhd\,a}"'] & v\rhd((\1_\SM\otimes m)\otimes n) \arrow[l, "{\id\,\rhd\,(\beta\,\otimes\,\id)}"'] \arrow[d, "{\id\,\rhd\,a}"] \arrow[dd, pos = 0.2, "{\id\,\rhd\,(l\,\otimes\,\id)}", bend right=70, shift right=5] \\
        m\otimes(v\rhd(\1_\SM\otimes n)) \arrow[r, "\alp2"] \arrow[d, "{\id\,\otimes\,(\id\,\rhd\,l)}"']& v\rhd(m\otimes(\1_\SM\otimes n)) \arrow[rd, shift right = 2, "{\id\,\rhd\,(\id\,\otimes\,l)}"']& v\rhd(\1_\SM\otimes (m\otimes n)) \arrow[d, "{\id\,\rhd\,l}"] \arrow[d, shift right = 7, "\com{\eqref{aL}}" description, color = white]\\
        m\otimes(v\rhd n) \arrow[rr, "{\alp2}"']\arrow[ru, "\com{\nat \alp2}" description, color = white] & & v\rhd(m\otimes n)
        \end{tikzcd}
    \end{equation*}

    which commutes, for all $v\in\SV$ and $m,n\in\SM$, due to the reasons indicated in boxes, where \fbox{1}~is \eqref{triangleA} and \fbox{2}~is \eqref{defDrinfeldUnitMorphism}.
\end{itemize}

Further, $G=\Id$ is obviously an equivalence of categories and hence it is an equivalence of module monoidal categories by Proposition \ref{propAlternativeEquivalenceUnitalModuleMonoidalCategories}, resulting in $G$ being an $\Id$\==equivalence of module monoidal categories.\bigskip

{\bfseries Part 2}

Given a pair $(\ST,F)$, applying the 2\==functor $\Psi$ from Section \ref{secEquivalenceConstructionPsi} yields a module monoidal category, denoted by $(\SM,\otimes,\1_\SM,a,l,r,\rhd,m,l^\rhd,\alp1,\alp2)$, and then applying Lemma \ref{lemUnitalModuleMonoidalCategoryToPair}, we end up with a pair $(\ST',F')$, where
\begin{itemize}
    \item $\ST=\ST'$ as monoidal categories,
    \item the braided central monoidal functor $F'\colon\SV\longrightarrow\ST$ is given by the lift $(F'^Z,\varphi^{F'}_2,\varphi^{F'}_0)$ with
    \begin{align*}
        F'^Z(v)&\overset{\ref{lemUnitalModuleMonoidalCategoryToPair}}{=}(v\rhd\1_\ST,\beta_{F'(v),-})\\
        &\overset{\ref{conPsi}}{=}F(v)\otimes\1_\ST,\\
        F'^Z\left(v\overset{f}{\longrightarrow}w\right)&\overset{\ref{lemUnitalModuleMonoidalCategoryToPair}}{=}\left[v\rhd\1_\ST\xrightarrow[]{f\,\rhd\,\id_{\1_\ST}}w\rhd\1_\ST\right]\\
        &\overset{\ref{conPsi}}{=}\left[F(v)\otimes\1_\ST\xrightarrow[]{F(f)\,\otimes\,\id_{\1_\ST}}F(w)\otimes\1_\ST\right],
    \end{align*}
    for all $v,w\in\SV$, $t\in\ST$ and $f\in\SV(v,w)$ where
    \begin{itemize}
        \item the half\--braiding $\beta_{F'(v),-}$ has, for all $v\in\SV$ and $t\in\ST$, components
        \[\beta_{F'(v),t}=a_{t,F(v),\1_\ST}\circ (\beta_{F(v),t}\otimes\id_{\1_\ST})\circ a_{F(v),t,\1_\ST}^{-1}\circ (\id_{F(v)}\otimes\beta_{\1_\ST,t})\circ a_{F(v),\1_\ST,t}\]
        since
        \begin{equation*}
        \begin{tikzcd}
        \begin{array}{l}F'(v)\otimes t\\=(v\rhd\1_\ST)\otimes t\\=(F(v)\otimes\1_\ST)\otimes t\end{array} \arrow[r, "{\beta_{F'(v),t}}"] \arrow[d, "{\begin{array}{l}\alp1\\\overset{\ref{conPsi}}{=}\,a\end{array}}"']& \begin{array}{l}t\otimes F'(v)\\=t\otimes(v\rhd\1_\ST)\\=t\otimes(F(v)\otimes\1_\ST)\end{array} \arrow[ldd, "\alp2"] \\
        \begin{array}{l}v\rhd(\1_\ST\otimes t)\\=F(v)\otimes(\1_\ST\otimes t)\end{array} \arrow[ru, "\com{1}" description, color = white] \arrow[d, "{\begin{array}{l}\id\,\rhd\,\beta_{\1_\ST,t}\\\overset{\ref{conPsi}}{=}\,\id\,\otimes\,\beta_{\1_\ST,t}\end{array}}"'] & (t\otimes F(v))\otimes\1_\ST \arrow[u, "a"'] \\
        \begin{array}{l}v\rhd(t\otimes\1_\ST)\\=F(v)\otimes(t\otimes\1_\ST)\end{array}\arrow[ru, "\com{2}" description, color = white]& (F(v)\otimes t)\otimes\1_\ST \arrow[u, "{\beta_{F(v),t}\,\otimes\,\id}"'] \arrow[l, "a"]
        \end{tikzcd}
        \end{equation*}
        commutes as \fbox{1} is the definition of $\beta_{F'(v),t}$ from Lemma \ref{lemUnitalModuleMonoidalCategoryToPair} and \fbox{2} is the definition of $\alp2$ from Construction \ref{conPsi},
        \item the natural isomorphism $\varphi_2^{F'}$ has, for all $v,w\in\SV$, components
        \begin{align*}
        \varphi_2^{F'}&=(\id_{F(v\otimes w)}\otimes r_{\1_\ST})\circ (\varphi_{2,v,w}^F\otimes\id_{\1_\ST\otimes\1_\ST})\circ a_{F(v),F(w),\1_\ST\otimes\1_\ST}^{-1}\\
        &\phantom{\,=\,}\circ(\id_{F(v)}\otimes a_{F(w),\1_\ST,\1_\ST})\circ(\id_{F(v)}\otimes(\beta_{F(w),\1_\ST}^{-1}\otimes\id_{\1_\ST}))\\
        &\phantom{\,=\,}\circ(\id_{F(v)}\otimes a_{\1_\ST,F(w),\1_\ST}^{-1})\circ a_{F(v),\1_\ST,F(w)\otimes\1_\ST}
        \end{align*}
        since
        \begin{equation*}
        \begin{tikzcd}
        \begin{array}{l}F'(v)\otimes F'(w)\\=(v\rhd\1_\ST)\otimes (w\rhd\1_\ST)\\=(F(v)\otimes\1_\ST)\otimes (F(w)\otimes\1_\ST)\end{array} \arrow[d, "{\begin{array}{l}\alp1\\\overset{\ref{conPsi}}{=}\,a\end{array}}"'] \arrow[r, "\varphi_2^{F'}"] & \begin{array}{l}F'(v\otimes w)\\=(v\otimes w)\rhd\1_\ST\\=F(v\otimes w)\otimes \1_\ST\end{array} \\
        \begin{array}{l}v\rhd(\1_\ST\otimes (w\rhd\1_\ST))\\=F(v)\otimes(\1_\ST\otimes(F(w)\otimes\1_\ST))\end{array} \arrow[rdd, "{\begin{array}{l}\id\,\rhd\,\alp2\\\overset{\ref{conPsi}}{=}\,\id\,\otimes\,\alp2\end{array}}"]\arrow[r, "\com{1}" description, color = white] & \begin{array}{l}(v\otimes w)\rhd(\1_\ST\otimes\1_\ST)\\=F(v\otimes w)\otimes(\1_\ST\otimes\1_\ST)\end{array} \arrow[u, "{\id\,\otimes\,r}"']\arrow[dd, "m"', bend right, shift right = 3] \\
        F(v)\otimes((\1_\ST\otimes F(w))\otimes \1_\ST) \arrow[d, shift left = 20, "\com{2}" description, color = white]\arrow[u, "{\id\,\otimes\,a}"]& X \arrow[d, "a"] \arrow[u, "{\varphi_2^F\,\otimes\,\id}"']\\
        F(v)\otimes((F(w)\otimes\1_\ST)\otimes\1_\ST) \arrow[u, "{\id\,\otimes\,(\beta_{F(w),\1_\ST}\,\otimes\,\id)}"] \arrow[r, "{\id\,\otimes\,a}"']& \begin{array}{l}v\rhd(w\rhd(\1_\ST\otimes\1_\ST))\\=F(v)\otimes(F(w)\otimes(\1_\ST\otimes\1_\ST))\end{array}  \arrow[uu, shift left = 5, "\com{3}" description, color = white]
        \end{tikzcd}
        \end{equation*}
        with $X\coloneqq(F(v)\otimes F(w))\otimes(\1_\ST\otimes\1_\ST)$ commutes as \fbox{1} is the definition of $\varphi_2^{F'}$ from Lemma \ref{lemUnitalModuleMonoidalCategoryToPair} and \fbox{2} \& \fbox{3} are the definitions of $\alp2$ and $m$ from Construction \ref{conPsi} and
        \item the isomorphism $\varphi_0^{F'}$ is given by
        \[\varphi_0^{F'}=(\varphi_0^F\otimes \id_{\1_\ST})\circ l_{\1_\ST}^{-1}\]
        since
        \begin{equation*}
        \begin{tikzcd}
        \1_\ST \arrow[rr, "\varphi_0^{F'}", bend left, shift left = 2, start anchor = east, end anchor = west] \arrow[rr, shift right = 10,shorten <= 1cm, shorten >= 1cm, "\com{2}" description, color = white]\arrow[rr, shift left = 3, shorten <= 1cm, shorten >= 1cm, "\com{1}" description, color = white]& & \begin{array}{l}F'(\1_\SV)\\=\1_\SV\rhd\1_\ST\\=F(\1_\SV)\otimes\1_\ST\end{array} \arrow[ll, "l_{\1_\ST}^{\rhd}"] \\
        & \1_\ST\otimes\1_\ST \arrow[ru, "{\varphi_0^F\,\otimes\,\id_{\1_\ST}}"', end anchor = west, shift right = 2, shorten <= 1ex] \arrow[lu, "l_{\1_\ST}", end anchor = east, shift left = 2, shorten <= 1ex] &
        \end{tikzcd}
        \end{equation*}
        commutes as \fbox{1} is the definition of $\varphi_0^{F'}$ from Lemma \ref{lemUnitalModuleMonoidalCategoryToPair} and \fbox{2} is the definition of $l^{\rhd}$ from Construction \ref{conPsi}.
    \end{itemize}
\end{itemize}
Now, we define a morphism of pairs $(G,\varphi_2^G,\varphi_0^G,\gamma)\colon (\ST,F)\longrightarrow (\ST',F')$ with
\[(G,\varphi_2^G,\varphi_0^G)=(\Id,\id,\id)\]
and
\begin{align*}
\gamma_v\colon \begin{array}{l} F'(v)\\=F(v)\otimes\1_\ST\end{array}&\longrightarrow \begin{array}{l}G(F(v))\\=F(v)\end{array}
\end{align*}
as
\[\gamma_v\coloneqq r_{F(v)}.\]
Indeed, $G$ is a morphism of pairs, since
\begin{itemize}
    \item $(G,\varphi^G_2,\varphi^G_0)=(\Id,\id,\id)\colon\ST\longrightarrow\ST'$ is obviously a monoidal functor and
    \item the action coherence $\gamma$ is a monoidal natural isomorphism since
    \begin{itemize}
        \item $\gamma$ is natural since $r$ is,
        \item the square \eqref{monNatTrafoPhi2}, after resolving the definitions from above, becomes
        \begin{equation*}
            {\hspace{-1cm}\footnotesize
            \begin{tikzcd}
            (F(v)\otimes\1_\ST)\otimes(F(w)\otimes\1_\ST) \arrow[d, "{\id\,\otimes\,r}"] \arrow[rd, "a"]\arrow[d, shift right = 10, "\com{\funct \otimes}" description, color = white]\arrow[dddddd, pos = 0.05, "{r\,\otimes\,r}"', bend right, shift right = 20] && F(v\otimes w)\otimes\1_\ST \arrow[dddddd, pos = 0.05,  "{r}", bend left, shift left=21]\arrow[dd, shift left = 10, "\com{\funct \otimes}" description, color = white] \\
            (F(v)\otimes\1_\ST)\otimes F(w)\arrow[r, "\com{\nat a}" description, color = white] \arrow[d, "a"] \arrow[d, shift right = 10, "\com{\eqref{triangleA}}" description, color = white]\arrow[ddddd, "{r\,\otimes\,\id}", bend right, shift right=18] & F(v)\otimes(\1_\ST\otimes(F(w)\otimes\1_\ST)) \arrow[d, shift right = 8, "\com{\eqref{aR}}" description, color = white]\arrow[ld, "{\id\,\otimes\,(\id\,\otimes r)}"] & \\
            F(v)\otimes(\1_\ST\otimes F(w)) \arrow[rdd, shorten >= 2cm, "\com{\nat r}" description, color = white]\arrow[dd, shift right = 10, "\com{1}" description, color = white]\arrow[dd, "{\id\,\otimes\,\beta_{\1_\ST,F(w)}}"] \arrow[dddd, "{\id\,\otimes\,l}", bend right, pos = 0.3, shift right=10] & F(v)\otimes((\1_\ST\otimes F(w))\otimes \1_\ST) \arrow[u, "{\id\,\otimes\,a}"'] \arrow[l, "{\id\,\otimes\,r}"] \arrow[d, "{\id\,\otimes(\beta_{\1_\ST,F(w)}\,\otimes\,\id)}"', bend right]& F(v\otimes w)\otimes(\1_\ST\otimes\1_\ST) \arrow[uu, "{\id\,\otimes\,r}"]\\
            & F(v)\otimes((F(w)\otimes\1_\ST)\otimes\1_\ST)\arrow[d, shift right = 10, "\com{\eqref{aR}}" description, color = white] \arrow[ld, "{\id\,\otimes\,r}"'] \arrow[d, "{\id\,\otimes\,a}"] \arrow[u, "{\id\,\otimes\,(\beta_{F(w),\1_\ST}\,\otimes\,\id)}"', bend right] & \\
            F(v)\otimes(F(w)\otimes\1_\ST) \arrow[dd, shift left = 20, shorten >= 1cm, shorten <=1cm, "\com{\eqref{aR}}" description, color = white]\arrow[dd, "{\id\,\otimes\,r}"]& F(v)\otimes(F(w)\otimes(\1_\ST\otimes\1_\ST)) \arrow[l, "{\id\,\otimes\,(\id\,\otimes\,r)}"'] & (F(v)\otimes F(w))\otimes (\1_\ST\otimes\1_\ST) \arrow[d, "{\id\,\otimes\,r}"] \arrow[uu, "{\varphi_2^F\,\otimes\,\id}"] \arrow[l, "a"]\arrow[d, shift right = 10, "\com{\nat a}" description, color = white]\\
            && (F(v)\otimes F(w))\otimes\1_\ST \arrow[lld, "r"] \arrow[uuuuu, "{\varphi_2^F\,\otimes\,\id}", bend right, shift right=20] \arrow[llu, "a"]\arrow[d, "\com{\nat r}" description, color = white] \\
            F(v)\otimes F(w) \arrow[rr, "\varphi_2^F"']& & F(v\otimes w)
        \end{tikzcd}}
        \end{equation*}
        which commutes, for all $v,w\in\SV$, due to the reasons indicated in boxes, where \fbox{1} is \eqref{defDrinfeldUnitMorphism},       
        \item the triangle \eqref{monNatTrafoPhi0} reduces directly to naturality of $r$ and, finally,
    \end{itemize}
    \item the action coherence $\gamma$ is compatible with the braidings on $Z(\ST)$ and $Z(\ST')$, since \eqref{actionCoherenceAndBraiding}, after resolving the definitions from above, becomes
        \begin{equation*}
            \begin{tikzcd}
            (F(v)\otimes\1_\ST)\otimes t \arrow[d, shift left = 10, "\com{\eqref{triangleA}}" description, color = white] \arrow[d, "a"'] \arrow[rr, "{r\,\otimes\,\id}"] & & F(v)\otimes t \arrow[d, shift right = 6, "\com{\eqref{aR}}" description, color = white]\arrow[ddd, "{\beta_{F(v),t}}", bend left, shift left=10] \\
            F(v)\otimes(\1_\ST\otimes t) \arrow[r, "{\id\,\otimes\,\beta_{\1_\ST,t}}"'] \arrow[rru, "{\id\,\otimes\,l}", bend left=2] \arrow[rr, shift left = 5, shorten <= 2cm, shorten >=2cm, "\com{1}" description, color = white]& F(v)\otimes (t\otimes\1_\ST) \arrow[ru, "{\id\,\otimes\,r}"']& (F(v)\otimes t)\otimes\1_\ST \arrow[u, "r"'] \arrow[d, "{\beta_{F(v),t}\,\otimes\,\id}"'] \arrow[l, "a"]\arrow[d, shift left = 8, "\com{\nat r}" description, color = white] \\
            & & (t\otimes F(v))\otimes\1_\ST \arrow[d, "r"'] \arrow[lld, "a"'] \\
            t\otimes (F(v)\otimes\1_\ST) \arrow[rr, "{\id\,\otimes\,r}"'] & & t\otimes F(v)  \arrow[u, shift left = 10, "\com{\eqref{aR}}" description, color = white]
            \end{tikzcd}
        \end{equation*}
        which commutes, for all $v\in\SV$ and $t\in\ST$, due to the reasons indicated in boxes, where \fbox{1} is \eqref{defDrinfeldUnitMorphism}.
\end{itemize}

Further, $G=\Id$ is obviously an equivalence of categories and hence it is an equivalence of pairs by Proposition \ref{propAlternativeEquivalencePairs}, resulting in $G$ being an $\Id$\==equivalence of pairs.
\end{proof}

\begin{Remark}\label{remOtherModuleStructure}
Note the condition of the equivalence theorem, that the functor in the pair has to be braided central monoidal. As mentioned in \cite[Section 3.2]{HPT15}, given a braided monoidal category $\SV$, a monoidal category $\ST$ and a monoidal functor $F\colon\SV\longrightarrow\ST$, there are a priori two different ways to endow $\ST$ with a $\SV$\==module structure. However, if $F$ is a braided central monoidal functor, these two module structures turn out to be equivalent.
\end{Remark}

\subsection{Equivalence on Hom--categories}\label{secEquivalenceHomCategories}
This section aims to show that the functors
\[\Psi_{(\SM,F),(\SM',F')}\colon \Pairs((\SM,F),(\SM',F'))\longrightarrow \UModMonCat(\Psi(\SM,F),\Psi(\SM',F'))\]
on Hom\==spaces contained in the 2\==functor $\Psi\colon\Pairs\longrightarrow\UModMonCat$ from Section \ref{secEquivalenceConstructionPsi} are
\begin{itemize}
    \item bijective on 1\==morphisms and
    \item bijective on 2\==morphisms, i.e.\ $\Psi_{(\SM,F),(\SM',F')}$ is fully faithful.
\end{itemize}

Fix pairs $(\SM,F)$ and $(\SM',F')$. First, we show injectivity on 1\==morphisms.

Let $(G,\varphi_2^G,\varphi_0^G,\gamma^G),(G',\varphi_2^{G'},\varphi_0^{G'},\gamma^{G'})\colon (\SM,F)\longrightarrow(\SM',F')$ be two morphisms of pairs and assume that they map to the same module monoidal functor under $\Psi$. The definition of $\Psi$ then implies
\begin{align*}
    (G,\varphi_2^G,\varphi_0^G,\varphi_2^G\circ(\gamma^G\otimes'\id))&=(G,\varphi_2^G,\varphi_0^G,s^{\Psi(G)})\\
    &=\Psi(G,\varphi_2^G,\varphi_0^G,\gamma^G)\\
    &=\Psi(G',\varphi_2^{G'},\varphi_0^{G'},\gamma^{G'})\\
    &=(G',\varphi_2^{G'},\varphi_0^{G'},\varphi_2^{G'}\circ(\gamma^{G'}\otimes'\id)),
\end{align*}
and hence $(G,\varphi_2^G,\varphi_0^G)=(G',\varphi_2^{G'},\varphi_0^{G'})$. With this and the fact that the target of $\gamma_v^{G'}$ is $G'F(v)=GF(v)$, we get
\[\varphi_{2,F(v),m}^G\circ(\gamma_v^G\otimes'\id_{G(m)}) = s_{v,m}^{\Psi(G)} = s_{v,m}^{\Psi(G')} = \varphi_{2,F(v),m}^{G'}\circ(\gamma_v^{G'}\otimes'\id_{G'(m)}) = \varphi_{2,F(v),m}^G\circ(\gamma_v^{G'}\otimes'\id_{G(m)})\]
for all $v\in\SV$ and $m\in\SM$ and hence, since $\varphi_2^G$ is an isomorphism, we have
\[\gamma_v^G\otimes'\id_{G(m)} = \gamma_v^{G'}\otimes'\id_{G(m)}\]
for all $v\in\SV$ and $m\in\SM$. Especially, for $m=\1_\SM$, we can compose with isomorphisms $\varphi_0^G$ and $r'$ as in
\[F'(v)\otimes'G(\1_\SM)\xrightarrow[]{\gamma^G\,\otimes'\,\id}GF(v)\otimes' G(\1_\SM)\xleftarrow[]{\id\,\otimes\,\varphi_0^G}GF(v)\otimes'\1_{\SM'}\xrightarrow[]{r'}GF(v),\]
which, by naturality of $\varphi_0^G$ and $r'$ is equal to the composition
\[F'(v)\otimes'G(\1_\SM)\xleftarrow[]{\id\,\otimes\,\varphi_0^G}F'(v)\otimes' \1_{\SM'}\xrightarrow[]{r'}F'(v)\xrightarrow[]{\gamma^G}GF(v).\]
Thus, we have, since $\varphi_0^G$ and $r'$ are isomorphisms, that
\begin{align*}
    &&\gamma_v^G\otimes'\id_{G(m)} &= \gamma_v^{G'}\otimes'\id_{G(m)} \\
    \Leftrightarrow&&\underbrace{r'_{GF(v)}\circ(\id_{GF(v)}\otimes (\varphi_0^G)^{-1})\circ(\gamma_v^G\otimes'\id_{G(m)})}_{=\,\gamma_v^G\,\circ \,r'_{F'(v)}\,\circ\,(\id_{F'(v)}\,\otimes\,(\varphi_0^G)^{-1})} &= \underbrace{r'_{GF(v)}\circ(\id_{GF(v)}\otimes (\varphi_0^G)^{-1})\circ(\gamma_v^{G'}\otimes'\id_{G(m)})}_{=\,\gamma_v^{G'}\,\circ\, r'_{F'(v)}\,\circ\,(\id_{F'(v)}\,\otimes\, (\varphi_0^G)^{-1})} \\
    \Leftrightarrow&&\gamma_v^G &= \gamma_v^{G'}
\end{align*}
for all $v\in\SV$.

Second, we show surjectivity on 1\==morphisms. Let $(G,\varphi_2^G,\varphi_0^G,s^G)\colon \Psi(\SM,F)\longrightarrow\Psi(\SM',F')$ be an arbitrary module monoidal functor. Define $(H,\varphi_2^H,\varphi_0^H,\gamma^H)\colon (\SM,F)\longrightarrow(\SM',F')$ as
\begin{itemize}
    \item $(H,\varphi_2^H,\varphi_0^H)\coloneqq(G,\varphi_2^G,\varphi_0^G)$ and
    \item define $\gamma^H_v\colon F'(v)\longrightarrow \begin{array}{l}HF(v)\\=GF(v)\end{array}$ as
    \[\gamma^H_v\coloneqq r'_{GF(v)}\circ (\id_{GF(v)}\otimes' \varphi_0^{-1})\circ \varphi_{2,F(v),\1_\SM}^{-1}\circ s^G_{v,\1_\SM}\circ (\id_{F'(v)}\otimes' \varphi_0)\circ r'^{-1}_{F'(v)},\]
    where $\varphi_2\coloneqq\varphi_2^G=\varphi_2^H$ and $\varphi_0\coloneqq\varphi_0^G=\varphi_0^H$.
\end{itemize}

We have that $(H,\varphi_2^H,\varphi_0^H,\gamma^H)\colon (\SM,F)\longrightarrow(\SM',F')$ is a morphism of pairs, since
\begin{itemize}
    \item $(H,\varphi_2^H,\varphi_0^H)=(G,\varphi_2^G,\varphi_0^G)$ is a monoidal functor,
    \item $\gamma^H$ is as a composition of natural isomorphisms again a natural isomorphism,
    \item the square \eqref{monNatTrafoPhi2} commutes; to prove that, observe first of all that functoriality of $\otimes'$ and the definition of $\gamma^H$ imply, for all $v,w\in\SV$, that $\gamma_v^H\otimes'\gamma_w^H$ is equal to the composition
    \[{\footnotesize\begin{tikzcd}
    F'(v)\otimes' F'(w)  & F'(v)\otimes'(F'(w)\otimes'\1_{\SM'}) \arrow[l, "{\id\,\otimes'\,r'}"'] \arrow[r, "{\id\,\otimes'\,(\id\,\otimes'\,\varphi_0)}"] & F'(v)\otimes'(F'(w)\otimes'G(\1_\SM)) \arrow[d, "{\id\,\otimes'\,s^G_{w,\1_\SM}}"] \\
    & (F'(v)\otimes'\1_{\SM'})\otimes'G(F(w)\otimes\1_\SM) \arrow[d, "{(\id\,\otimes'\,\varphi_0)\,\otimes'\,\id}"']\arrow[r, "{r'\,\otimes'\,\id}"]  & F'(v)\otimes'G(F(w)\otimes\1_\SM) \arrow[dd, "s^G_{v,F(w)\otimes\1_\SM}", dotted] \\
    &(F'(v)\otimes'G(\1_\SM))\otimes'G(F(w)\otimes\1_\SM) \arrow[d, "{s^G_{v,\1_\SM}\,\otimes'\,\id}"'] &\\
    &G(F(v)\otimes\1_\SM)\otimes'G(F(w)\otimes\1_\SM)\arrow[r, color = white, "\com{*}" description] & G(F(v)\otimes(F(w)\otimes\1_\SM))\\
    &{(GF(v)\otimes'\,G(\1_\SM))\otimes'G(F(w)\otimes\1_\SM)} \arrow[u, "{\varphi_2\,\otimes'\,\id}"]&\\
     & {(GF(v)\otimes'\,\1_{\SM'})\otimes'G(F(w)\otimes\1_\SM)} \arrow[u, "{(\id\,\otimes'\,\varphi_0)\,\otimes'\,\id}"'] \arrow[r, "{r'\,\otimes'\,\id}"] & GF(v)\otimes'G(F(w)\otimes\1_\SM) \arrow[uu, "\varphi_2"', dotted]\\
    (GF(v)\otimes'GF(w)) & GF(v)\otimes'(GF(w)\otimes'\1_{\SM'}) \arrow[l, "{\id\,\otimes'\,r'}"'] \arrow[r, "{\id\,\otimes'\,(\id\,\otimes'\,\varphi_0)}"] & GF(v)\otimes'(GF(w)\otimes'G(\1_\SM)) \arrow[u, "{\id\,\otimes'\,\varphi_2}"']
    \end{tikzcd}}\]
    where \fbox{$*$} commutes, since (with $X\coloneqq F(w)\otimes\1_\SM$)
    \[{\begin{tikzcd}
    (F'(v)\otimes'\1_{\SM'})\otimes'G(X) \arrow[r, "a'"] \arrow[rr, bend left=14, "{r'\,\otimes'\,\id}"] \arrow[d, "{(\id\,\otimes'\,\varphi_0)\,\otimes'\,\id}"'] \arrow[rr, "\com{\eqref{triangleA}}" description, shift left=6, shorten >=2cm, shorten <=2cm, color = white] \arrow[rd, "\com{\nat a'}" description, color = white] & F'(v)\otimes'(\1_{\SM'}\otimes'G(X)) \arrow[d, "{\id\,\otimes'\,(\varphi_0\,\otimes'\,\id)}"]\arrow[r, "{\id\,\otimes'\,l'}"] \arrow[rd, "\com{\eqref{phi0l}}" description, color = white] & F'(v)\otimes'G(X) \arrow[ddd, bend left, shift left = 13, "{s^G_{v,X}}"']\arrow[d, shift left = 8, "\com{\nat s^G}" description, color = white] \\
    (F'(v)\otimes'G(\1_\SM))\otimes'G(X) \arrow[r, "a'"] \arrow[dd, "{s^G_{v,\1_\SM}\,\otimes'\,\id}"'] & F'(v)\otimes'(G(\1_\SM)\otimes'G(X))\arrow[r, "{\id\,\otimes'\,\varphi_2}"] \arrow[dd, "\com{\eqref{diagModuleMonoidalFunctorCompAlpha1}}" description, color = white]& F'(v)\otimes'G(\1_\SM\otimes X)\arrow[d, "{s^G_{v,\1_\SM\otimes X}}"']\arrow[u, "{\id\,\otimes'\,G(l)}"]\\
    &   & G(F(v)\otimes(\1_\SM\otimes X)) \arrow[d, "{G(\id\,\otimes\,l)}"']\\
    G(F(v)\otimes\1_\SM)\otimes'G(X) \arrow[d, "\com{\eqref{phi0r}}" description, shift left = 8, color = white]\arrow[r, "\varphi_2"] \arrow[rrd, "{G(r)\,\otimes'\,\id}"] &G((F(v)\otimes\1_\SM)\otimes X) \arrow[r, "{G(r\,\otimes\,\id)}"']\arrow[r, "\com{\eqref{triangleA}}" description, shift left = 4, color = white] \arrow[ru, "G(a)"]  & G(F(v)\otimes X)\arrow[d, "\com{\nat \varphi_2}" description, shift right = 8, color = white]\\
    {(GF(v)\otimes'\,G(\1_\SM))\otimes'G(X)} \arrow[u, "{\varphi_2\,\otimes'\,\id}"] & {(GF(v)\otimes'\,\1_{\SM'})\otimes'G(X)} \arrow[l, "{(\id\,\otimes'\,\varphi_0)\,\otimes'\,\id}"] \arrow[r, "{r'\,\otimes'\,\id}"']& GF(v)\otimes'G(X) \arrow[u, "\varphi_2"']
    \end{tikzcd}}\]
    commutes, for all $v,w\in\SV$, due to the reasons indicated in boxes; by which the commutativity of \eqref{monNatTrafoPhi2} follows, for all $v,w\in\SV$, from the commutativity of
    \[{\footnotesize
    \begin{tikzcd}
    F'(v)\otimes' F'(w) \arrow[rr, "\varphi_2^{F'}"] \arrow[d, "\com{\eqref{aR}}" description, shift left=9, color = white] \arrow[rr, "\com{\nat r'}" description, shift right=5, shorten <=2cm, color = white]& & F'(v\otimes w) \\
    F'(v)\otimes'(F'(w)\otimes'\1_{\SM'}) \arrow[u, "{\id\,\otimes'\,r'}"] \arrow[d, "{\id\,\otimes'\,(\id\,\otimes'\,\varphi_0)}"'] \arrow[rd, "\com{\nat a'}" description, color = white] & (F'(v)\otimes'F'(w))\otimes'\1_{\SM'} \arrow[lu, "r'"'] \arrow[d, "{\id\,\otimes'\,\varphi_0}"] \arrow[l, "a'"] \arrow[r, "{\varphi_2^{F'}\,\otimes'\,\id}"] \arrow[rd, "\com{\funct \otimes'}" description, color = white] & F'(v\otimes w)\otimes'\1_{\SM'} \arrow[u, "r'"'] \arrow[d, "{\id\,\otimes'\,\varphi_0}"] \\
    F'(v)\otimes'(F'(w)\otimes'G(\1_\SM)) \arrow[d, "{\id\,\otimes'\,s^G_{w,\1_\SM}}"'] & (F'(v)\otimes'F'(w))\otimes'G(\1_\SM) \arrow[l, "a'"] \arrow[r, "{\varphi_2^{F'}\,\otimes'\,\id}"'] \arrow[d, "\com{\eqref{sPentagon}}" description, color = white] & F'(v\otimes w)\otimes' G(\1_\SM) \arrow[ddd, "{s^G_{v\otimes w,\1_\SM}}"]\\
    F'(v)\otimes'G(F(w)\otimes\1_\SM) \arrow[d,dashed]\arrow[d, shift left = 10, color = white, "\com{*}" description]\arrow[r, "{s^G_{v,F(w)\otimes\1_\SM}}"]& G(F(v)\otimes(F(w)\otimes\1_\SM)) &\\
    GF(v)\otimes'G(F(w)\otimes\1_\SM) \arrow[ru, "\varphi_2"]\arrow[rd, "\com{\eqref{phi2Hexagon}}" description, color = white] & &\\
    GF(v)\otimes'(GF(w)\otimes'G(\1_\SM)) \arrow[u, "{\id\,\otimes'\,\varphi_2}"]  \arrow[d, "\com{\nat a'}" description, shift right=10, color = white]& G((F(v)\otimes F(w))\otimes\1_\SM) \arrow[uu, "G(a)"] \arrow[r, "{G(\varphi_2^F\,\otimes\,\id)}"] \arrow[r, shift right = 6, "\com{\nat \varphi_2}" description, color = white]& G(F(v\otimes w)\otimes\1_\SM)\\
    (GF(v)\otimes'GF(w))\otimes'G(\1_\SM) \arrow[u, "a'"'] \arrow[r, "{\varphi_2\,\otimes'\,\id}"] \arrow[r, shift right = 6, "\com{\funct \otimes'}" description, color = white] & G(F(v)\otimes F(w))\otimes'G(\1_\SM) \arrow[u, "\varphi_2"] \arrow[r, "{G(\varphi_2^F)\,\otimes'\,\id}"] \arrow[r, shift right = 6, "\com{\funct \otimes'}" description, color = white] & GF(v\otimes w))\otimes'G(\1_\SM) \arrow[u, "\varphi_2"'] \\
    (GF(v)\otimes'GF(w))\otimes'\1_{\SM'} \arrow[u, "{\id\,\otimes'\,\varphi_0}"'] \arrow[r, "{\varphi_2\,\otimes'\,\id}"] \arrow[dd, "r'", bend left, shift left=20] \arrow[d, "a'"]  \arrow[d, "\com{\eqref{aR}}" description, shift left=10, color = white] & G(F(v)\otimes F(w))\otimes'\1_{\SM'} \arrow[dd, "\com{\nat r'}" description, shift right = 20, color = white]\arrow[u, "{\id\,\otimes'\,\varphi_0}"] \arrow[r, "{G(\varphi_2^F)\,\otimes'\,\id}"] \arrow[dd, "r'"] \arrow[rdd, "\com{\nat r'}" description, color = white]& GF(v\otimes w))\otimes'\1_{\SM'} \arrow[u, "{\id\,\otimes'\,\varphi_0}"'] \arrow[dd, "r'"] \\
    GF(v)\otimes'(GF(w)\otimes'\1_{\SM'}) \arrow[d, "{\id\,\otimes'\,r'}"] \arrow[uuu, "{\id\,\otimes'\,(\id\,\otimes'\,\varphi_0)}"', bend left, shift left=21]& &\\
    (GF(v)\otimes'GF(w)) \arrow[r, "\varphi_2"']& G(F(v)\otimes F(w)) \arrow[r, "G(\varphi_2^F)"']& GF(v\otimes w)) 
    \end{tikzcd}}\]
    due to the reasons indicated in boxes,
    \item plugging in the definition of $\gamma^H$, the triangle \eqref{monNatTrafoPhi0} becomes 
    \begin{equation*}
        \begin{tikzcd}
\1_{\SM'} \arrow[rr, "\varphi_0^{F'}"] \arrow[dddddd, shift right = 10,  "\com{\nat l'}" description, color = white]\arrow[dddddd, shift right = 8, "\varphi_0"', bend right]\arrow[rrd, pos=0.4, "\com{\nat r'}" description, color = white] && F'(\1_\SV)\\
\1_{\SM'}\otimes\1_{\SM'} \arrow[u, "l'", "{=\,r'}"'] \arrow[d, "{\id\,\otimes'\,\varphi_0}"] \arrow[rr, "{\varphi_0^{F'}\,\otimes'\,\id}"]\arrow[rrd, pos = 0.4, "\com{\funct \otimes'}" description, color = white] && F'(\1_\SV)\otimes'\1_{\SM'} \arrow[u, "r'"'] \arrow[d, "{\id\,\otimes\,\varphi_0}"] \\
\1_{\SM'}\otimes'G(\1_\SM) \arrow[rr, "{\varphi_0^{F'}\,\otimes'\,\id}"] \arrow[dddd, "l'"]\arrow[rd,  "\com{\eqref{sTriangle}}" description, color = white]&& F'(\1_\SV)\otimes'G(\1_\SM) \arrow[d, "{s^G_{\1_\SV,\1_\SM}}"]\\
& G(\1_\SM\otimes\1_\SM) \arrow[rd,  pos=0.4, "\com{\nat \varphi_2}" description, color = white]\arrow[r, "{G(\varphi_0^F\,\otimes\,\id)}"] \arrow[lddd, "G(l)"', "{=\,G(r)}", start anchor = west] & G(F(\1_\SV)\otimes\1_\SM) \\
& G(\1_\SM)\otimes'G(\1_\SM) \arrow[d, shift right = 8,  "\com{\eqref{phi0r}}" description, color = white]\arrow[rd, pos = 0.4, "\com{\funct \otimes'}" description, color = white]\arrow[u, "\varphi_2"] \arrow[r, "{G(\varphi_0^F)\,\otimes'\,\id}"] & GF(\1_\SV)\otimes'G(\1_\SM) \arrow[u, "\varphi_2"']\\
& G(\1_\SM)\otimes' \1_{\SM'} \arrow[rd, pos = 0.4, "\com{\nat r'}" description, color = white]\arrow[r, "{G(\varphi_0^F)\,\otimes'\,\id}"] \arrow[u, "{\id\,\otimes\,\varphi_0}"'] \arrow[ld, "r'"] & GF(\1_\SV)\otimes'\1_{\SM'} \arrow[u, "{\id\,\otimes\,\varphi_0}"'] \arrow[d, "r'"]\\
G(\1_\SM) \arrow[rr, "G(\varphi_0^F)"']&& GF(\1_\SV) 
\end{tikzcd}
    \end{equation*}
    which commutes due to the reasons indicated in boxes and
    \item the hexagon \eqref{actionCoherenceAndBraiding} splits into
    \begin{equation*}
        \varphi_{2,F(v),m}^G\circ (\gamma_v^H\otimes' \id_{G(m)})\overset{(I)}{=}s^G_{v,m}\overset{(II)}{=} G(\beta_{F(v),m}^{Z(\SM)})^{-1}\circ \varphi_{2,m,F(v)}^G\circ (\id_{G(m)}\otimes' \gamma_v^H)\circ \beta_{F'(v),G(m)}^{Z(\SM')}
    \end{equation*}
    where (I) holds by the commutativity of
    \begin{equation*}
        {\footnotesize
\begin{tikzcd}
& & F'(v)\otimes' G(m) \arrow[ddddddd, "{s^G_{v,m}}", bend left, shift left=13]\\
(F'(v)\otimes'\1_{\SM'})\otimes'G(m) \arrow[rru, "{r'\,\otimes'\,\id}", bend left=8] \arrow[d, "{(\id\,\otimes'\,\varphi_0)\,\otimes'\,\id}"'] \arrow[r, "a'"] \arrow[rd, "\com{\nat a'}" description, color = white]\arrow[rru, "\com{\eqref{triangleA}}" description, color = white, shorten >= 2cm, shorten <= 2cm] & F'(v)\otimes'(\1_{\SM'}\otimes' G(m)) \arrow[ru, "{\id\,\otimes'\,l'}"] \arrow[d, "{\id\,\otimes'\,(\varphi_0\,\otimes'\,\id)}"] \arrow[rd, "\com{\eqref{phi0l}}" description, color = white] & \\
(F'(v)\otimes'G(\1_{\SM}))\otimes'G(m) \arrow[d, "{s^G_{v,\1_\SM}\,\otimes'\,\id}"'] \arrow[r, "a'"] & F'(v)\otimes' (G(\1_{\SM})\otimes'G(m)) \arrow[r, "{\id\,\otimes'\,\varphi_2}"] \arrow[d, "\com{\eqref{diagModuleMonoidalFunctorCompAlpha1}}" description, color = white] & F'(v)\otimes'G(\1_\SM\otimes m) \arrow[uu, "{\id\,\otimes'\,G(l)}"] \arrow[dddd, "{s^G_{v,\1_\SM\otimes m}}"'] \arrow[dddd, "\com{\nat s^G}" description, shift left=10, color = white] \\
G(F(v)\otimes\1_\SM)\otimes'G(m) \arrow[r, "\varphi_2"] & G((F(v)\otimes\1_\SM)\otimes m) \arrow[rddd, "G(a)"] \arrow[dd, "\com{\eqref{phi2Hexagon}}" description, color = white]& \\
(GF(v)\otimes'G(\1_\SM))\otimes' G(m) \arrow[u, "{\varphi_2\,\otimes'\,\id}"] \arrow[rd, "a'"]& & \\
(GF(v)\otimes'\1_{\SM'})\otimes' G(m)  \arrow[r, "\com{\nat a'}" description, color = white] \arrow[d, "\com{\eqref{triangleA}}" description, shift right = 12, color = white]\arrow[dd, "r'\,\otimes'\,\id"',  bend right, shift right = 20] \arrow[u, "{(\id\,\otimes'\,\varphi_0)\,\otimes'\,\id}"] \arrow[d, "a'"'] &  GF(v)\otimes'(G(\1_\SM)\otimes' G(m)) \arrow[d, "{\id\,\otimes'\,\varphi_2}"] & \\
GF(v)\otimes'(\1_{\SM'}\otimes'G(m)) \arrow[r, "\com{\eqref{phi0l}}" description, color = white]\arrow[ru, "\id\,\otimes'\,(\varphi_0\,\otimes'\,\id)"]\arrow[d, "\id\,\otimes'\,l'"']&  GF(v)\otimes'G(\1_\SM\otimes m) \arrow[r, "\varphi_2"] \arrow[ld, "{\id\,\otimes'\,G(l)}"] \arrow[rd, "\com{\nat \varphi_2}" description, color = white] &  G(F(v)\otimes (\1_\SM\otimes m)) \arrow[d, "{G(\id\,\otimes\,l)}"']\\
GF(v)\otimes'G(m)\arrow[rr, "\varphi_2"']&&G(F(v)\otimes m)
\end{tikzcd}}
    \end{equation*}
    for all $v\in\SV$ and $m\in\SM$, and (II) holds by the commutativity of
    \begin{equation*}
        {\footnotesize
\begin{tikzcd}
G(m)\otimes'(F'(v)\otimes'\1_{\SM'}) \arrow[r, "{\id\,\otimes'\,r'}"] \arrow[ddd, "{\id\,\otimes'\,(\id\,\otimes'\,\varphi_0)}", bend right, shift right=19] & G(m)\otimes'F'(v) \arrow[d, "\com{\nat r'}" description, color = white]& F'(v)\otimes' G(m) \arrow[l, "{\beta^{Z(\SM')}}"'] \arrow[dddddddddd, "{s_{v,m}}"', bend left, shift left=19] \arrow[d, "\com{\eqref{aR}}" description, shift right=9, color = white]\\
(G(m)\otimes'F'(v))\otimes'\1_{\SM'} \arrow[ru, "r'"] \arrow[u, "a'"] \arrow[d, "{\id\,\otimes'\,\varphi_0}"] \arrow[u, "\com{\eqref{aR}}" description, shift right=9, color = white]  \arrow[r, "\com{\funct \otimes'}" description, shift right = 9, shorten <= 1cm, shorten >=1cm, color = white] & (F'(v)\otimes'G(m))\otimes'\1_{\SM'} \arrow[ru, "r'"] \arrow[r, "a'"] \arrow[l, "{\beta^{Z(\SM')}\,\otimes'\,\id}"] \arrow[d, "{\id\,\otimes'\,\varphi_0}"] \arrow[rd, "\com{\nat a'}" description, color = white] & F'(v)\otimes'(G(m)\otimes'\1_{\SM'}) \arrow[u, "{\id\,\otimes'\,r'}"'] \arrow[d, "{\id\,\otimes'\,(\id\,\otimes'\,\varphi_0)}"'] \\
(G(m)\otimes'F'(v))\otimes'G(\1_\SM) \arrow[d, "a'"] \arrow[d, "\com{\nat a'}" description, shift right=10, color = white]\arrow[rrddd, "\com{\eqref{diagModuleMonoidalFunctorCompAlpha2}}" description, color = white] & (F'(v)\otimes'G(m))\otimes'G(\1_\SM) \arrow[l, "{\beta^{Z(\SM')}\,\otimes'\,\id}"] \arrow[r, "a'"']& F'(v)\otimes'(G(m)\otimes'G(\1_\SM)) \arrow[d, "{\id\,\otimes'\,\varphi_2}"']\arrow[d, "\com{\eqref{phi0r}}" description, shift left=10, color = white]\\
G(m)\otimes'(F'(v)\otimes'G(\1_\SM)) \arrow[d, "{\id\,\otimes'\,s_{v,\1_\SM}}"']&& F'(v)\otimes'G(m\otimes\1_\SM) \arrow[uuu, "{\id\,\otimes'\,G(r)}", bend right, shift right=18] \arrow[d, "{s_{v,m\otimes\1_\SM}}"'] \arrow[d, "\com{\nat s}" description, shift left=10, color = white] \\
G(m)\otimes'G(F(v)\otimes\1_\SM) \arrow[d, "\varphi_2"]&& G(F(v)\otimes(m\otimes\1_\SM)) \arrow[dddddd, "{G(\id\,\otimes\,r)}", bend left, pos = 0.1, shift left=17] \arrow[d, "\com{\eqref{aR}}" description, shift left=10, color = white] \\
G(m\otimes(F(v)\otimes\1_\SM)) & G((m\otimes F(v))\otimes\1_\SM) \arrow[rd, "\com{\nat \varphi_2}" description, color = white] \arrow[l, "G(a)"']& G((F(v)\otimes m)\otimes\1_\SM) \arrow[u, "G(a)"] \arrow[l, "{G(\beta^{Z(\SM)}\,\otimes\,\id)}"'] \arrow[ddddd, "G(r)"', bend left, shift left=15] \\
G(m)\otimes'(GF(v)\otimes'G(\1_\SM)) \arrow[uu, "{\id\,\otimes'\,\varphi_2}"', bend left, shift left=15, pos = 0.9] \arrow[u, "\com{\eqref{phi2Hexagon}}" description, color = white]& G(m\otimes F(v))\otimes'G(\1_\SM) \arrow[u, "\varphi_2"] \arrow[dd, shift right=25, shorten <=1cm, shorten >=1cm, "\com{\funct\otimes'}" description, color = white]\arrow[rdd, "\com{\funct \otimes'}" description, color = white] & G(F(v)\otimes m)\otimes'G(\1_\SM) \arrow[l, "{G(\beta^{Z(\SM)})\,\otimes'\,\id}"] \arrow[u, "\varphi_2"] \arrow[dd, "\com{\eqref{phi0r}}" description, shift left=10, color = white]\\
(G(m)\otimes'GF(v))\otimes'G(\1_\SM) \arrow[u, "a'"'] \arrow[u, "\com{\nat a'}" description, shift left=10, color = white] \arrow[ru, "{\varphi_2\,\otimes'\,\id}"]&& \\
(G(m)\otimes'GF(v))\otimes'\1_{\SM'} \arrow[u, "{\id\,\otimes'\,\varphi_0}"'] \arrow[d, "a'"'] \arrow[dd, "r'", bend left, shift left=20] \arrow[r, "{\varphi_2\,\otimes'\,\id}"]  \arrow[d, "\com{\eqref{aR}}" description, shift left=10, color = white]  & G(m\otimes F(v))\otimes'\1_{\SM'} \arrow[dd, "r'"] \arrow[uu, "{\id\,\otimes'\,\varphi_0}"] \arrow[rdd, "\com{\nat r'}" description, color = white] \arrow[dd, "\com{\nat r'}" description, shift right=20, color = white] & G(F(v)\otimes m)\otimes'\1_{\SM'} \arrow[l, "{G(\beta^{Z(\SM)})\,\otimes'\,\id}"] \arrow[dd, "r'"] \arrow[uu, "{\id\,\otimes'\,\varphi_0}"]\\
G(m)\otimes'(GF(v)\otimes'\1_{\SM'}) \arrow[uuu, "{\id\,\otimes'\,(\id\,\otimes'\,\varphi_0)}"', bend left, shift left=20] \arrow[d, "{\id\,\otimes'\,r'}"']&& \\
G(m)\otimes'GF(v) \arrow[r, "\varphi_2"']& G(m\otimes F(v)) & G(F(v)\otimes m) \arrow[l, "{G(\beta^{Z(\SM)})}"] 
\end{tikzcd}}
    \end{equation*}
    for all $v\in\SV$ and $m\in\SM$.
\end{itemize}

If we consider the image of $H$ under $\Psi_{(\SM,F),(\SM',F')}$, we have that the equation $s^{\Psi(H)}=s^G$ is satisfied, since 
\begin{equation*}
    {\footnotesize
\begin{tikzcd}
F'(v)\otimes'G(m) \arrow[rr, "{\gamma^H\,\otimes'\,\id}"]&& GF(v)\otimes'G(m) \arrow[ddddd, "\varphi_2", bend left, shift left=21] \arrow[d, "\com{\eqref{phi0r}}" description, shift left=10, color = white]\\
F'(v)\otimes'(\1_{\SM'}\otimes G(m)) \arrow[u, "{\id\,\otimes\,l'}"] \arrow[u, "\com{\eqref{triangleA}}" description, shift right=9, color = white] \arrow[dd, "{\id\,\otimes'\,(\varphi_0\,\otimes'\,\id)}"] \arrow[rdd, "\com{\nat a'}" description, color = white] \arrow[dd, "\com{\eqref{phi0l}}" description, shift right=10, color = white] & (F'(v)\otimes'\1_{\SM'})\otimes G(m) \arrow[l, "a'"] \arrow[lu, "{r'\,\otimes'\,\id}"] \arrow[dd, "{(\id\,\otimes'\,\varphi_0)\,\otimes'\,\id}"] \arrow[r, "\com{\text{def. }\gamma^H}" description, color = white] & (GF(v)\otimes'\1_{\SM'})\otimes'G(m) \arrow[u, "{r'\,\otimes'\,\id}"] \arrow[d, "{(\id\,\otimes'\,\varphi_0)\,\otimes'\,\id}"']\\
 && (GF(v)\otimes'G(\1_\SM))\otimes' G(m) \arrow[d, "{\varphi_2\,\otimes'\,\id}"'] \\
F'(v)\otimes'(G(\1_\SM)\otimes'G(m)) \arrow[dd, "{\id\,\otimes'\,\varphi_2}"]  & (F'(v)\otimes'G(\1_\SM))\otimes' G(m) \arrow[dd, "\com{\eqref{diagModuleMonoidalFunctorCompAlpha1}}" description, color = white] \arrow[r, "{s_{v,\1_\SM}\,\otimes'\,\id}"] \arrow[l, "a'"]& G(F(v)\otimes \1_\SM)\otimes' G(m) \arrow[d, "\varphi_2"'] \arrow[uuu, "{G(r)\,\otimes'\,\id}", bend right, shift right=20] \arrow[d, "\com{\nat \varphi_2}" description, color = white, shift left = 10] \\
&  & G((F(v)\otimes \1_\SM)\otimes m) \arrow[ld, "G(a)"'] \arrow[d, "{G(r\,\otimes\,\id)}"] \arrow[d, "\com{\eqref{triangleA}}" description, shift right=10, color = white]\\
F'(v)\otimes'G(\1_\SM\otimes m) \arrow[r, "{s_{v,\1_\SM\otimes m}}"'] \arrow[uuuuu, "{\id\,\otimes'\,G(l)}", bend left, shift left=18]& G(F(v)\otimes(\1_\SM\otimes m)) \arrow[r, "{G(\id\,\otimes\,l)}"']& G(F(v)\otimes m) 
\end{tikzcd}}
\end{equation*}
commutes, for all $v\in\SV$ and $m\in\SM$, due to the reasons indicated in boxes, where moving along the top and right border we have the definition of $s^{\Psi(H)}_{v,m}$ and moving along the left and bottom border we have, using naturality of $s$, the morphism $s^G_{v,m}$. It follows that
\begin{align*}
    \Psi(H,\varphi_2^H,\varphi_0^H,\gamma^H)&=(G,\varphi_2^G,\varphi_0^G,s^{\Psi(H)})\\
    &=(G,\varphi_2^G,\varphi_0^G,s^G),
\end{align*}
which proves the surjectivity of $\Psi_{(\SM,F),(\SM',F')}$ on 1\==morphisms.

Next, we show that $\Psi_{(\SM,F),(\SM',F')}$ is fully faithful. Recall that, for $\eta\colon G\Longrightarrow H$ a 2\==morphism of pairs between morphisms of pairs $G,H\colon (\SM,F)\longrightarrow (\SM',F')$, we defined $\Psi(\eta)\coloneqq \eta$. This obviously makes $\Psi_{(\SM,F),(\SM',F')}$ faithful. To prove fullness, let $\eta\colon \Psi(G)\Longrightarrow\Psi(H)$ be a module monoidal natural transformation. We have to show that $\eta\colon G\Longrightarrow H$ is a 2\==morphism of pairs. Since $\Psi$ restricted to the monoidal part is the identity, we have that $\eta\colon G\Longrightarrow H$ is a monoidal natural transformation. Further, the diagram \eqref{condition2MorphismOfPairs} reads
\begin{equation*}
    \begin{tikzcd}
F'(v) \arrow[r, shift right = 5, "\com{\nat r'}" description, color = white]\arrow[rrddddd, "\gamma^H"', shift right = 12, bend right=70] \arrow[rr, "\gamma^G", bend left=15]& GF(v)\otimes'\1_{\SM'} \arrow[r, "r'"]\arrow[r, "\com{\eqref{phi0r}}" description, shift right = 5, color = white]\arrow[d, "\id\,\otimes'\,\varphi_0^G"]& GF(v) \arrow[ddddd, shift left = 10, shorten <=2cm, shorten >= 2cm, "\com{\nat \eta}" description, color = white]\arrow[ddddd, "\eta", bend left, shift left=8] \\
& GF(v)\otimes'G(\1_\SM) \arrow[rddd, "\com{\eqref{monNatTrafoS}}" description, color = white, shorten <=2cm, shorten >=2cm]\arrow[r, "\varphi_2^G"] & G(F(v)\otimes\1_\SM) \arrow[u, "G(r)"] \arrow[ddd, "\eta"'] \\
F'(v)\otimes'\1_{\SM'} \arrow[uu, "r'"] \arrow[ruu, "{\gamma^G\,\otimes'\,\id}"] \arrow[r, "{\id\,\otimes'\,\varphi_0^G}"] \arrow[rd, end anchor = north west, "{\id\,\otimes'\,\varphi_0^H}"'] \arrow[rddd, "{\gamma^H\,\otimes'\,\id}", bend right] & F'(v)\otimes'G(\1_\SM) \arrow[u, pos = 0.6, "\com{\funct \otimes'}" description, shift left = 10, color = white]\arrow[d, pos = 0.23, "\com{\eqref{monNatTrafoPhi0}}" description, shift right = 11, color = white]\arrow[d, "{\id\,\otimes'\,\eta}"] \arrow[u, "{\gamma^G\,\otimes'\,\id}"'] & \\
& F'(v)\otimes'H(\1_\SM) \arrow[d, "\com{\funct \otimes'}" description, shift right = 10, color = white]\arrow[d, "{\gamma^H\,\otimes'\,\id}"] & \\
& HF(v)\otimes'H(\1_\SM) \arrow[r, "\com{\eqref{phi0r}}" description, color = white, shift right = 6]\arrow[r, "\varphi_2^H"] & H(F(v)\otimes\1_\SM) \arrow[d, "H(r)"'] \\
& HF(v)\otimes'\1_{\SM'} \arrow[r, "r'"]\arrow[r, "\com{\nat r'}" description, color = white, shift right = 5] \arrow[u, "{\id\,\otimes'\,\varphi_0^H}"'] & HF(v)
\end{tikzcd}
\end{equation*}
which commutes, for all $v\in\SV$, due to the reasons indicated in boxes. Thus, $\eta\colon G\Longrightarrow H$ is a 2\==morphism of pairs.

\appendix
\section{Proof of Lemma \ref{lemUnitalModuleMonoidalCategoryToPair}}\label{secAppendixProofLemmaUnitalModuleMonoidalCategoryToPair}

We have to check that the definition from the lemma defines a braided central monoidal functor $(F,F^Z)\colon\SV\longrightarrow\ST$. This is indeed the case, since
\begin{itemize}
    \item $F^Z$ is well\--defined, i.e.
    \begin{itemize}
        \item $(F(v),\beta_{F(v),-})$ is an element in $Z(\ST)$, i.e.\ $F(v)\in\ST$ (which is clear by definition) and $\beta_{F(v),-}$ satisfies \eqref{defDrinfeldHalfBraiding}, which, after resolving the definitions from the lemma, becomes
        \begin{equation*}
        \begin{tikzcd}
((v\rhd\1_\ST)\otimes t)\otimes t' \arrow[dd, "{\alp1\,\otimes\,\id}"']\arrow[dd, shift left = 15, "\com{\eqref{a1a}}" description, color = white]  \arrow[r, "a"] & (v\rhd\1_\ST)\otimes (t\otimes t')  \arrow[d, "\alp1"]&& (t\otimes t')\otimes (v\rhd\1_\ST) \arrow[dddddd, "a", bend left, shift left=10]\arrow[dddddd, "\com{\eqref{a2a}}" description, shorten <= 3cm, shorten >= 3cm, pos = 0.4, shift left = 10, color = white] \arrow[d, "\alp2"'] \\
& v\rhd(\1_\ST\otimes(t\otimes t')) \arrow[rr, shift right = 15, "\com{\eqref{defDrinfeldHalfBraiding}}" description, color = white]\arrow[rr, "{\id\,\rhd\,\beta}"] && v\rhd ((t\otimes t')\otimes \1_\ST) \arrow[ddd, "{\id\,\rhd\,a}"']\\
(v\rhd(\1_\ST\otimes t))\otimes t' \arrow[rd, "\com{\nat \alp1}" description, color = white]\arrow[d, "{(\id\,\rhd\,\beta)\,\otimes\,\id}"'] \arrow[r, "\alp1"] & v\rhd ((\1_\ST\otimes t)\otimes t') \arrow[d, "{\id\,\rhd\,(\beta\,\otimes\,\id)}"] \arrow[u, "{\id\,\rhd\,a}"'] && \\
(v\rhd(t\otimes\1_\ST))\otimes t' \arrow[rddd, shorten >= 2cm, "\com{\eqref{a1a2}}" description, color = white]\arrow[r, "\alp1"]& v\rhd ((t\otimes \1_\ST)\otimes t') \arrow[d, "{\id\,\rhd\,a}"]&& \\
& v\rhd (t\otimes (\1_\ST\otimes t')) \arrow[rrd, "\com{\nat\alp2}" description, color = white]\arrow[rr, "{\id\,\rhd\,(\id\,\otimes\,\beta)}"]&& v\rhd (t\otimes (t'\otimes \1_\ST)) \\
& t\otimes (v\rhd(\1_\ST\otimes t')) \arrow[u, "\alp2"'] \arrow[rr, "{\id\,\otimes\,(\id\,\rhd\,\beta)}"']&& t\otimes(v\rhd(t'\otimes \1_\ST)) \arrow[u, "\alp2"]\\
(t\otimes(v\rhd\1_\ST))\otimes t' \arrow[uuu, "{\alp2\,\otimes\,\id}"] \arrow[r, "a"']& t\otimes ((v\rhd\1_\ST)\otimes t')  \arrow[u, "{\id\,\otimes\,\alp1}"'] &&t\otimes (t'\otimes (v\rhd \1_\ST)) \arrow[u, "{\id\,\otimes\,\alp2}"] 
\end{tikzcd}
        \end{equation*}
        which commutes, for all $v\in\SV$ and $t,t'\in\ST$, due to the reasons indicated in boxes,
        \item for a morphism $f\in\SV(v,w)$, we have that $F^Z(f)$ is a morphism in $Z(\ST)$, i.e.\ $F(f)=f\rhd \id_{\1_\ST}$ is a morphism in $\ST(v\rhd\1_\ST,w\rhd\1_\ST)$ (which is clear by the functoriality of the action $\rhd$) and, for all $t\in\ST$, we have that Equation \eqref{defDrinfeldMorphism} is satisfied, which, after resolving the definitions from the lemma, becomes
        \begin{equation*}
        \begin{tikzcd}
(v\rhd \1_\ST)\otimes t  \arrow[r, "\alp1"]\arrow[d, "{(f\,\rhd\,\id)\,\otimes\,\id}"']\arrow[rd,"\com{\nat\alp1}" description, color = white] & v\rhd(\1_\ST\otimes t) \arrow[rd, "\com{\funct\rhd}" description, color = white]\arrow[d, "{f\,\rhd\,\id}"] \arrow[r, "{\id\,\rhd\,\beta}"]& v\rhd(t\otimes\1_\ST) \arrow[d, "{f\,\rhd\,\id}"']\arrow[rd, "\com{\nat\alp2}" description, color = white] & t\otimes (v\rhd\1_\ST)\arrow[l, "\alp2"']\arrow[d, "{\id\,\otimes\,(f\,\rhd\,\id)}"]\\
(w\rhd\1_\ST)\otimes t \arrow[r, "\alp1"']&w\rhd(\1_\ST\otimes t) \arrow[r, "{\id\,\rhd\,\beta}"']& w\rhd(t\otimes\1_\ST) & t\otimes (w\rhd\1_\ST) \arrow[l, "\alp2"]
\end{tikzcd}
        \end{equation*}
        which commutes, for all $v,w\in\SV$ and $t\in\ST$, due to the reasons indicated in boxes,
    \end{itemize}
    \item $F^Z$ is a functor, i.e.\ compatible with composition of morphisms, which follows from the functoriality of the action $\rhd$, since, for all composable morphisms $u\overset{f}{\longrightarrow}v\overset{g}{\longrightarrow}w$, we have
    \[F^Z(g\circ f)=(g\circ f)\rhd\id_{\1_\ST}=(g\rhd \id_{\1_\ST})\circ (f\rhd\id_{\1_\ST})=F^Z(g)\circ F^Z(f),\]
    \item the functor $F^Z$ is monoidal, i.e.
    \begin{itemize}
        \item $\varphi_2$ is a morphism in $Z(\ST)$, i.e.\ Equation \eqref{defDrinfeldMorphism} is satisfied, since
        \begin{equation*}
        \begin{tikzcd}[column sep = huge]
        \begin{array}{l}(F(v)\otimes F(w))\otimes t\\=((v\rhd\1)\otimes(w\rhd\1))\otimes t\end{array} \arrow[r, "{\beta_{F(v)\otimes F(w),t}}"] \arrow[d, "{\varphi_2\,\otimes\,\id}"'] \arrow[rd, "\Delta"]\arrow[d, "\com{\eqref{pf1}}" description, color = white, shift left = 10] & \begin{array}{l}t\otimes (F(v)\otimes F(w))\\=t\otimes ((v\rhd\1)\otimes(w\rhd\1))\end{array} \arrow[d, "\com{\eqref{pf2}}" description, color = white, shift right = 10] \arrow[d, "{\id\,\otimes\,\varphi_2}"] \\
        \begin{array}{l}F(v\otimes w)\otimes t\\=((v\otimes w)\rhd\1)\otimes t\end{array} \arrow[r, "{\beta_{F(v\otimes w),t}}"']& \begin{array}{l}t\otimes F(v\otimes w)\\=t\otimes((v\otimes w)\rhd\1)\end{array}
        \end{tikzcd}
        \end{equation*}
        commutes, for all $v,w\in\SV$ and $t\in\ST$, since \fbox{\eqref{pf1}} commutes due to Diagram \eqref{pf1} and \fbox{\eqref{pf2}} commutes due to Diagram \eqref{pf2}, namely, after resolving the definitions from the lemma and with abbreviations $\1\coloneqq\1_\ST$, $f_1\coloneqq \id\rhd(\id\otimes(\id\rhd\beta_{\1,t}))$ and $f_2\coloneqq\id\rhd(\id\rhd(\id\otimes\beta_{\1,t}))$, the diagram
        \begin{equation}\label{pf1}
        {\footnotesize
        \begin{tikzcd}[column sep = huge]
        ((v\rhd\1)\otimes (w\rhd\1))\otimes t \arrow[r, "a"]\arrow[rdddd, shorten >= 3cm, "\com{\eqref{a1a}}" description, color = white] \arrow[dddd, "{\alp1\,\otimes\,\id}"'] & (v\rhd\1)\otimes((w\rhd\1)\otimes t) \arrow[r, "{\id\,\otimes\,\alp1}"]\arrow[ddd, shorten >= 3cm, shift left = 20, "\com{\nat\alp1}" description, color = white] \arrow[ddd, "\alp1"]& (v\rhd\1)\otimes(w\rhd(\1\otimes t)) \arrow[d, "{\id\,\otimes\,(\id\,\rhd\,\beta)}"] \arrow[ddd, "\alp1"', shift right = 15, bend right] \\
        & & (v\rhd\1)\otimes(w\rhd(t\otimes\1)) \arrow[d,, shift right = 10,  "\com{\nat\alp1}" description, color = white]\arrow[d, "\alp1"]\\
        & & v\rhd(\1\otimes(w\rhd(t\otimes\1))) \arrow[ddd, "{\id\,\rhd\,\alp2}"', bend left, shift left = 17]\arrow[d, shift left = 10, "\com{\nat\alp2}" description, color = white]\\
        & v\rhd(\1\otimes((w\rhd\1)\otimes t)) \arrow[r, "{\id\,\rhd\,(\id\,\otimes\,\alp1)}"]& v\rhd(\1\otimes(w\rhd(\1\otimes t))) \arrow[d, "{\id\,\rhd\,\alp2}"'] \arrow[u, "{f_1}"]\\
        (v\rhd(\1\otimes(w\rhd\1)))\otimes t \arrow[rdd, "\com{\nat\alp1}" description, color = white]\arrow[dd, "{(\id\,\rhd\,\alp2)\,\otimes\,\id}"'] \arrow[r, "\alp1"]& v\rhd((\1\otimes(w\rhd\1))\otimes t) \arrow[ru, "\com{\eqref{a1a2}}" description, color = white]\arrow[dd, "{\id\,\rhd\,(\alp2\,\otimes\,\id)}"] \arrow[u, "{\id\,\rhd\,a}"]& v\rhd(w\rhd(\1\otimes(\1\otimes t))) \arrow[d, "{f_2}"']\\
        & & v\rhd(w\rhd(\1\otimes(t\otimes\1))) \\
        (v\rhd(w\rhd(\1\otimes\1)))\otimes t \arrow[rddd, shorten >= 3cm, "\com{\eqref{a1m}}" description, color = white] \arrow[r, "\alp1"]& v\rhd((w\rhd(\1\otimes\1))\otimes t) \arrow[dd, "{\id\,\rhd\,\alp1}"] & v\rhd(w\rhd((\1\otimes t)\otimes t)) \arrow[u, "{\id\,\rhd\,(\id\,\rhd\,a)}"'] \arrow[d, "{\id\,\rhd\,(\id\,\rhd\,(\beta\,\otimes\,\id))}"] \\
        & & v\rhd(w\rhd((t\otimes\1)\otimes\1)) \arrow[d, "{\id\,\rhd\,(\id\,\rhd\,a)}"]\arrow[d, "\com{\eqref{defDrinfeldTensor}}" description, color = white, shift right = 15]\\
        & v\rhd(w\rhd((\1\otimes\1)\otimes t))  \arrow[rd, "\com{\nat m}" description, color = white]\arrow[r, "{\id\,\rhd\,(\id\,\rhd\,\beta)}"] \arrow[ruuuu, end anchor = south west, "{\id\,\rhd\,(\id\,\rhd\,a)}"', bend right=20, pos = 0.4] & v\rhd(w\rhd(t\otimes(\1\otimes\1))) \\
        ((v\otimes w)\rhd (\1\otimes\1))\otimes t \arrow[uuu, "{m\,\otimes\,\id}"]\arrow[rd, "\com{\nat\alp1}" description, color = white]\arrow[dd, "{(\id\,\rhd\,r)\,\otimes\,\id}"'] \arrow[r, "\alp1"]& (v\otimes w)\rhd((\1\otimes\1)\otimes t) \arrow[u, "m"']\arrow[rd, "\com{\nat\beta}" description, color = white]\arrow[d, "{\id\,\rhd\,(r\,\otimes\,\id)}"] \arrow[r, "{\id\,\rhd\,\beta}"]& (v\otimes w)\rhd(t\otimes(\1\otimes\1)) \arrow[d, "{\id\,\rhd\,(\id\,\otimes\,r)}"]\arrow[u, "m"'] \\
        & (v\otimes w)\rhd (\1\otimes t) \arrow[r, "{\id\,\rhd\,\beta}"'] & (v\otimes w)\rhd(t\otimes \1) \\
        ((v\otimes w)\rhd\1)\otimes t \arrow[ru, "\alp1"']& & t\otimes ((v\otimes w)\rhd \1) \arrow[u, "\alp2"'] 
        \end{tikzcd}}
        \end{equation}
        commutes, for all $v,w\in\SV$ and $t\in\ST$, due to the reasons indicated in boxes, and, after resolving the definitions from the lemma and with abbreviation $\1\coloneqq\1_\ST$, the diagram
        \begin{equation}\label{pf2}
        \hspace{-1cm}
        {\footnotesize
        \begin{tikzcd}
        \begin{array}{l}(F(v)\otimes F(w))\otimes t\\=((v\rhd\1)\otimes(w\rhd\1))\otimes t\end{array}\arrow[rr,shift right = 20, "\com{\eqref{defDrinfeldTensor}}" description, color = white]\arrow[rr,"{\beta}"]\arrow[d,"a"']&&\begin{array}{l}t\otimes(F(v)\otimes F(w))\\=t\otimes((v\rhd\1)\otimes(w\rhd\1))\end{array}\arrow[dddddddddd,"{\id\,\otimes\,\alp1}"]\\
        \begin{array}{l}F(v)\otimes(F(w)\otimes t)\\=(v\rhd\1)\otimes((w\rhd\1)\otimes t)\end{array}\arrow[d,"{\id\,\otimes\,\alp1}"']\arrow[rdd,"{\id\,\otimes\,\beta}"]&&\\
        (v\rhd\1)\otimes(w\rhd(\1\otimes t))\arrow[rd, "\com{\dfn \beta}" description, color = white]\arrow[d,"{\id\,\otimes\,(\id\,\rhd\,\beta)}"']&&\\
        (v\rhd\1)\otimes(w\rhd(t\otimes\1))\arrow[dd, "\com{\nat\alp1}" description, color = white]\arrow[ddd,"\alp1"',bend right, shift right = 15]&\begin{array}{l}F(v)\otimes(t\otimes F(w))\\=(v\rhd\1)\otimes(t\otimes(w\rhd\1))\end{array}\arrow[l,"{\id\,\otimes\,\alp2}"]\arrow[ldd,"\alp1", bend right=10]&\\
        &\begin{array}{l}(F(v)\otimes t)\otimes F(w)\\=((v\rhd\1)\otimes t)\otimes(w\rhd\1)\end{array}\arrow[d,"{\beta\,\otimes\,\id}"]\arrow[u,"a"']\arrow[ddd,"{\alp1\,\otimes\,\id}",bend right, shift right = 20]&\\
        v\rhd(\1\otimes(t\otimes(w\rhd\1)))\arrow[ru, "\com{\eqref{a1a}}" description, color = white]\arrow[d,"{\id\,\rhd\,(\id\,\otimes\,\alp2)}"]&\begin{array}{l}(t\otimes F(v))\otimes F(w)\\=(t\otimes(v\rhd\1))\otimes(w\rhd\1)\end{array}\arrow[d,"{\alp2\,\otimes\,\id}"]\arrow[ruuuuu,"a", start anchor = north east]\arrow[d, "\com{\dfn\beta}" description, shift right = 8, color = white]\arrow[d, "\com{\eqref{a1a2}}" description, color = white, shift left = 40]&\\
        v\rhd(\1\otimes(w\rhd(t\otimes\1)))\arrow[d, "\com{\eqref{a2a}}" description, color = white, shift left = 11]\arrow[d,"{\id\,\rhd\,\alp2}"']&(v\rhd(t\otimes\1))\otimes(w\rhd\1)\arrow[ddd,"\alp1",bend left, shift left = 20]&\\
        v\rhd(w\rhd(\1\otimes(t\otimes\1)))&(v\rhd(\1\otimes t))\otimes(w\rhd\1)\arrow[d, "\com{\nat\alp1}" description, shift left = 17, color = white]\arrow[u,"{(\id\,\rhd\,\beta)\,\otimes\,\id}"']\arrow[d,"\alp1"]&\\
        v\rhd(w\rhd((\1\otimes t)\otimes t))\arrow[rd, "\com{\nat\alp2}" description, color = white]\arrow[u,"{\id\,\rhd\,(\id\,\rhd\,a)}"]\arrow[d,"{\id\,\rhd\,(\id\,\rhd\,(\beta\,\otimes\,\id))}"']&v\rhd((\1\otimes t)\otimes(w\rhd\1))\arrow[d,"{\id\,\rhd\,(\beta\,\otimes\,\id)}"]\arrow[l,"{\id\,\rhd\,\alp2}"]\arrow[luuu,start anchor = west, shorten <= 0.2cm, end anchor = south east, "{\id\,\rhd\,a}"]&\\
        v\rhd(w\rhd((t\otimes\1)\otimes\1))\arrow[rdd, "\com{\eqref{a2a}}" description, color = white]\arrow[dd,"{\id\,\rhd\,(\id\,\rhd\,a)}"']&v\rhd((t\otimes\1)\otimes(w\rhd\1))\arrow[d,"{\id\,\rhd\,a}"]\arrow[l,"{\id\,\rhd\,\alp2}"]&\\
        &v\rhd(t\otimes(\1\otimes(w\rhd\1)))\arrow[rd, "\com{\nat\alp2}" description, color = white]\arrow[d,"{\id\,\rhd\,(\id\,\otimes\,\alp2)}"]&t\otimes(v\rhd(\1\otimes(w\rhd\1)))\arrow[d,"{\id\,\otimes\,(\id\,\rhd\,\alp2)}"]\arrow[l,"\alp2"']\\
        v\rhd(w\rhd(t\otimes(\1\otimes\1)))\arrow[rrd, shorten <= 3cm, "\com{\eqref{a2m}}" description, color = white]&v\rhd(t\otimes(w\rhd(\1\otimes\1)))\arrow[l,"{\id\,\rhd\,\alp2}"]&t\otimes(v\rhd(w\rhd(\1\otimes\1)))\arrow[l,"\alp2"]\\
        (v\otimes w)\rhd(t\otimes(\1\otimes\1))\arrow[u,"m"]\arrow[rrd, "\com{\nat\alp2}" description, color = white]\arrow[d,"{\id\,\rhd\,(\id\,\otimes\,r)}"']&&t\otimes((v\otimes w)\rhd(\1\otimes\1))\arrow[d,"{\id\,\otimes\,(\id\,\rhd\,r)}"]\arrow[ll,"\alp2"]\arrow[u,"{\id\,\otimes\,m}"']\\
        (v\otimes w)\rhd(t\otimes\1)&&t\otimes((v\otimes w)\rhd\1)\arrow[ll,"\alp2"]
        \end{tikzcd}}
        \end{equation}
        commutes, for all $v,w\in\SV$ and $t\in\ST$, due to the reasons indicated in boxes,
        \item $\varphi_2$ satisfies the hexagon \eqref{phi2Hexagon}, since
        \begin{equation*}
        \begin{tikzcd}
        \begin{array}{l}(F(u)\otimes F(v))\otimes F(w)\\=((u\rhd\1)\otimes (v\rhd\1))\otimes (w\rhd\1)\end{array} \arrow[r, "a"] \arrow[d, "{\varphi_2\,\otimes\,\id}"'] \arrow[rdd, "\Delta"] & \begin{array}{l}F(u)\otimes(F(v)\otimes F(w))\\=(u\rhd\1)\otimes((v\rhd\1)\otimes(w\rhd\1))\end{array} \arrow[d, shift right = 10, "\com{\eqref{pf4}}" description, color = white] \arrow[d, "{\id\,\otimes\,\varphi_2}"] \\
        \begin{array}{l}F(u\otimes v)\otimes F(w)\\=((u\otimes v)\rhd\1)\otimes (w\rhd\1)\end{array} \arrow[d, shift left = 10, "\com{\eqref{pf3}}" description, color = white] \arrow[d, "\varphi_2"'] & \begin{array}{l}F(u)\otimes F(v\otimes w)\\=(u\rhd\1)\otimes((v\otimes w)\rhd\1)\end{array} \arrow[d, "\varphi_2"] \\
        \begin{array}{l}F((u\otimes v)\otimes w)\\=((u\otimes v)\otimes w)\rhd\1\end{array} \arrow[r, "{\begin{array}{l}F(a)\\=\,a\,\rhd\,\id\end{array}}"']&\begin{array}{l}F(u\otimes(v\otimes w))\\=(u\otimes(v\otimes w))\rhd\1\end{array} 
        \end{tikzcd}
        \end{equation*}
        commutes, for all $v,w\in\SV$ and $t\in\ST$, since \fbox{\eqref{pf3}} commutes due to Diagram \eqref{pf3} and \fbox{\eqref{pf4}} commutes due to Diagram \eqref{pf4}, namely, after resolving the definitions from the lemma and with abbreviation $\1\coloneqq\1_\ST$, the diagram
        \begin{equation}\label{pf3}
        {\footnotesize
        \hspace{-1cm}
\begin{tikzcd}
((u\rhd\1)\otimes (v\rhd\1))\otimes (w\rhd\1) \arrow[r, "{\alp1\,\otimes\,\id}"] & (u\rhd(\1\otimes(v\rhd\1)))\otimes (w\rhd\1) \arrow[rd, "\com{\nat\alp1}" description, color = white]\arrow[ld, "{(\id\,\rhd\,\alp2)\,\otimes\,\id}"'] \arrow[r, "\alp1"] & u\rhd((\1\otimes(v\rhd\1))\otimes(w\rhd\1)) \arrow[d, "{\id\,\rhd\,(\alp2\,\otimes\,\id)}"] \\
(u\rhd (v\rhd (\1\otimes\1)))\otimes (w\rhd\1) \arrow[rr, shift right = 5, pos = 0.6, shorten <= 4cm, shorten >= 2cm, "\com{\nat\alp1}" description, color = white] \arrow[rd, "{(\id\,\rhd\,(\id\,\rhd\,r))\,\otimes\,\id}"] \arrow[rr, "\alp1"]&& u\rhd((v\rhd(\1\otimes\1))\otimes (w\rhd\1)) \arrow[ddd, "{\id\,\rhd\,\alp1}"] \arrow[ldd, "{\id\,\rhd\,((\id\,\rhd\,r)\,\otimes\,\id)}"] \\
((u\otimes v)\rhd(\1\otimes\1))\otimes (w\rhd\1) \arrow[u, "{m\,\otimes\,\id}"] \arrow[r, "\com{\nat m}" description, color = white]\arrow[d, "{(\id\,\rhd\,r)\,\otimes\,\id}"'] & (u\rhd(v\rhd\1))\otimes(w\rhd\1) \arrow[d, "\alp1"] & \\
((u\otimes v)\rhd\1)\otimes (w\rhd\1) \arrow[rdddd, "\com{\eqref{a1m}}" description, color = white]\arrow[dddd, "\alp1"'] \arrow[ru, "{m\,\otimes\,\id}"'] & u\rhd((v\rhd\1)\otimes(w\rhd\1)) \arrow[dddd, "{\id\,\rhd\,\alp1}"']\arrow[rd, "\com{\nat\alp1}" description, color = white]& \\
 && u\rhd(v\rhd((\1\otimes\1)\otimes (w\rhd\1))) \arrow[d, shift right = 10, "\com{\eqref{triangleA}}" description, color = white]\arrow[d, "{\id\,\rhd\,(\id\,\rhd\,a)}"] \arrow[lddd, start anchor = west, pos = 0.2, "{\id\,\rhd\,(\id\,\rhd\,(r\,\otimes\,\id))}"'] \\
 && u\rhd(v\rhd(\1\otimes(\1\otimes(w\rhd\1)))) \arrow[ldd,start anchor= west, "{\id\,\rhd\,(\id\,\rhd\,(\id\,\otimes\,l))}", pos = 0.7]\arrow[d, shift right = 10, "\com{\nat\alp2}" description, color = white]\\
 && u\rhd(\1\otimes(v\rhd(\1\otimes(w\rhd\1)))) \arrow[u, "{\id\,\rhd\,\alp2}"'] \arrow[d, "{\id\,\rhd\,(\id\,\otimes\,(\id\,\rhd\,l))}"]\\
(u\otimes v)\rhd(\1\otimes (w\rhd\1)) \arrow[rdd, "\com{\nat m}" description, color = white]\arrow[dd, "{\id\,\rhd\,\alp2}"'] \arrow[r, "m"'] & u\rhd(v\rhd(\1\otimes(w\rhd\1))) \arrow[rdd, "\com{\eqref{a2m}}" description, color = white]\arrow[dd, "{\id\,\rhd\,(\id\,\rhd\,\alp2)}"] & u\rhd(\1\otimes(v\rhd(w\rhd\1)))  \arrow[l, "{\id\,\rhd\,\alp2}"]\\
 && u\rhd(\1\otimes((v\otimes w)\rhd\1)) \arrow[d, "{\id\,\rhd\,\alp2}"]\arrow[u, "{\id\,\rhd\,(\id\,\otimes\,m)}"']\\
(u\otimes v)\rhd(w\rhd (\1\otimes\1)) \arrow[rrd, "\com{\eqref{pentM}}" description, color = white]\arrow[r, "m"'] & u\rhd(v\rhd(w\rhd(\1\otimes\1)))  & u\rhd((v\otimes w)\rhd(\1\otimes\1)) \arrow[l, "{\id\,\rhd\,m}"] \\
((u\otimes v)\otimes w)\rhd(\1\otimes\1) \arrow[u, "m"] \arrow[rrd, "\com{\funct \rhd}" description, color = white]\arrow[d, "{\id\,\rhd\,r}"'] \arrow[rr, "{a\,\rhd\,\id}"]&& (u\otimes (v\otimes w))\rhd(\1\otimes\1) \arrow[d, "{\id\,\rhd\,r}"]\arrow[u, "m"']\\
((u\otimes v)\otimes w)\rhd\1 \arrow[rr, "{a\,\rhd\,\id}"']&& (u\otimes(v\otimes w))\rhd\1
\end{tikzcd}}
        \end{equation}
        commutes, for all $u,v,w\in\SV$, due to the reasons indicated in boxes, and, after resolving the definitions from the lemma and with abbreviations $\1\coloneqq\1_\ST$ and
        \begin{align*}f_1&\coloneqq\id \rhd (\id \otimes (\id \rhd (\id \rhd r_\1)))\\
        &=\id \rhd (\id \otimes (\id \rhd (\id \rhd l_\1)),
        \end{align*}
        the diagram
        \begin{equation}\label{pf4}
        {\footnotesize
        \hspace{-1cm}
        \begin{tikzcd}
        ((u\rhd\1)\otimes (v\rhd\1))\otimes (w\rhd\1) \arrow[d, "{\alp1\,\otimes\,\id}"'] \arrow[rr, "a"]\arrow[d, shift left = 20, "\com{\eqref{a1a}}" description, color = white] & & (u\rhd\1)\otimes((v\rhd\1)\otimes(w\rhd\1)) \arrow[dddddd, "{\id\,\otimes\,\alp1}"] \arrow[ldd, "\alp1"] \\
        (u\rhd(\1\otimes(v\rhd\1)))\otimes (w\rhd\1) \arrow[d, "\alp1"']& & \\
        u\rhd((\1\otimes(v\rhd\1))\otimes(w\rhd\1)) \arrow[d, "{\id\,\rhd\,(\alp2\,\otimes\,\id)}"'] \arrow[r, "{\id\,\rhd\,a}"]& u\rhd(\1\otimes((v\rhd\1)\otimes(w\rhd\1))) \arrow[rdddd, "\com{\nat\alp1}" description, color = white]\arrow[ldddd, shorten >= 0.5cm, end anchor = east, "{\id\,\rhd\,(\id\,\otimes\,\alp1)}"] & \\
        u\rhd((v\rhd(\1\otimes\1))\otimes (w\rhd\1))\arrow[d, shift left = 30, "\com{\eqref{a1a2}}" description, color = white] \arrow[d, "{\id\,\rhd\,\alp1}"'] & & \\
        u\rhd(v\rhd((\1\otimes\1)\otimes (w\rhd\1))) \arrow[d, "{\id\,\rhd\,(\id\,\rhd\,a)}"']& & \\
        u\rhd(v\rhd(\1\otimes(\1\otimes(w\rhd\1)))) & & \\
        u\rhd(\1\otimes(v\rhd(\1\otimes(w\rhd\1)))) \arrow[d, shift left = 10, "\com{\eqref{a2l}}" description, color = white]\arrow[u, "{\id\,\rhd\,\alp2}"] \arrow[d, "{\id\,\rhd\,(\id\,\otimes\,(\id\,\rhd\,l))}"'] \arrow[rd, "{\id\,\rhd\,(\id\,\otimes\,(\id\,\rhd\,\alp2))}"] & & (u\rhd\1)\otimes(v\rhd(\1\otimes(w\rhd\1))) \arrow[d, "{\id\,\otimes\,(\id\,\rhd\,\alp2)}"] \arrow[ll, "\alp1"'] \\
        u\rhd(\1\otimes(v\rhd(w\rhd\1))) & u\rhd(\1\otimes(v\rhd(w\rhd(\1\otimes\1)))) \arrow[d, "\com{\nat\alp1}" description, color = white]\arrow[ru, "\com{\nat\alp1}" description, color = white]\arrow[l, "{f_1}"'] & (u\rhd\1)\otimes(v\rhd(w\rhd(\1\otimes\1)))  \arrow[l, "\alp1"] \arrow[ld, "{\id\,\otimes\,(\id\,\rhd\,(\id\,\rhd\,r))}"] \\
        & (u\rhd\1)\otimes(v\rhd(w\rhd(\1)) \arrow[lu, "\alp1"] \arrow[r, "\com{\nat m}" description, color = white] & (u\rhd\1)\otimes((v\otimes w)\rhd(\1\otimes\1)) \arrow[d, "{\id\,\otimes\,(\id\,\rhd\,r)}"]\arrow[u, "{\id\,\otimes\,m}"'] \\
        u\rhd(\1\otimes((v\otimes w)\rhd\1)) \arrow[uu, "{\id\,\rhd\,(\id\,\otimes\,m)}"]\arrow[ru, "\com{\nat\alp1}" description, color = white]\arrow[d, "{\id\,\rhd\,\alp2}"'] & & (u\rhd\1)\otimes((v\rhd\1)\otimes(w\rhd\1))\arrow[lu, "{\id\,\otimes\,m}"'] \arrow[ll, "\alp1"] \\
        u\rhd((v\otimes w)\rhd(\1\otimes\1)) & (u\otimes (v\otimes w))\rhd(\1\otimes\1) \arrow[l, "m"]\arrow[r, "{\id\,\rhd\,r}"'] & (u\otimes(v\otimes w))\rhd\1
        \end{tikzcd}}
        \end{equation}
        commutes, for all $u,v,w\in\SV$, due to the reasons indicated in boxes,
        \item $\varphi_0$ is a morphism in $Z(\ST)$, i.e.\ Equation \eqref{defDrinfeldMorphism} is satisfied, which, after resolving the definitions from the lemma, becomes
        \begin{equation*}
        \begin{tikzcd}
        &\1_\ST\otimes t \arrow[d,, shift right=10, "\com{\eqref{a1lm}}" description, color = white]\arrow[rd, "\com{\nat l^\rhd}" description, color = white]\arrow[r, "{\beta}"]  & t\otimes\1_\ST \arrow[d, shift left = 10, "\com{\eqref{a2lm}}" description, color = white] &\\
        (\1_\SV\rhd\1_\ST)\otimes t \arrow[r, "\alp1"'] \arrow[ru, "{l^\rhd\,\otimes\,\id}", bend left=15]&\1_\SV\rhd(\1_\ST\otimes t) \arrow[u, "l^\rhd"'] \arrow[r, "{\id\,\rhd\,\beta}"']& \1_\SV\rhd(t\otimes\1_\ST) \arrow[u, "l^\rhd"] & t\otimes(\1_\SV\rhd\1_\ST) \arrow[l, "\alp2"] \arrow[lu, "{\id\,\otimes\,l^\rhd}"', bend right=15]
        \end{tikzcd}
        \end{equation*}
        which commutes, for all $t\in\ST$, due to the reasons indicated in boxes,
        \item $\varphi_0$ satisfies the triangle \eqref{phi0l}, which, after resolving the definitions from the lemma, becomes
        \begin{equation*}
        \begin{tikzcd}
\1_\ST\otimes (v\rhd\1_\ST) \arrow[rr, shift left = 6, shorten >= 2cm, shorten <= 2cm, "\com{\eqref{a2l}}" description, color = white]\arrow[d, shift right = 10, "\com{\eqref{a1lm}}" description, color = white]\arrow[rd, "\com{\nat l^\rhd}" description, color = white] \arrow[rr, "l", bend left=15, shift left = 1] \arrow[r, "\alp2"] & v\rhd (\1_\ST\otimes\1_\ST) \arrow[d, shift left = 7, "\com{\eqref{diagTri'}}" description, color = white]\arrow[r, "{\id\,\rhd\,l}"] & v\rhd\1_\ST\arrow[d, "\com{\funct \rhd}" description, color = white]\\
\1_\SV\rhd(\1_\ST\otimes(v\rhd\1_\ST)) \arrow[r, "{\id\,\rhd\,\alp2}"'] \arrow[u, "l^\rhd"'] & \1_\SV\rhd(v\rhd(\1_\ST\otimes\1_\ST)) \arrow[r, "m"'] \arrow[u, "l^\rhd"] & (\1_\SV\otimes v)\rhd(\1_\ST\otimes\1_\ST) \arrow[d, "{\begin{array}{l}\id\,\rhd\,r\\=\,\id\,\rhd\,l\end{array}}"'] \arrow[lu, "{l\,\rhd\,\id}"'] \\
(\1_\SV\rhd \1_\ST)\otimes (v\rhd \1_\ST) \arrow[uu, "{l^\rhd\,\otimes\,\id}"', pos = 0.2, bend left, shift left=18] \arrow[u, "\alp1"'] & & (\1_\SV\otimes v)\rhd \1_\ST \arrow[uu, "{l\,\rhd\,\id}", pos = 0.2, bend right, shift right=16]
\end{tikzcd}
        \end{equation*}
        which commutes, for all $v\in\SV$, due to the reasons indicated in boxes and
        \item $\varphi_0$ satisfies the triangle \eqref{phi0r}, which, after resolving the definitions from the lemma, becomes
        \begin{equation*}
        \begin{tikzcd}
(v\rhd\1_\ST)\otimes\1_\ST \arrow[rr, shift left = 6, shorten >= 2cm, shorten <= 2cm, "\com{\eqref{a1r}}" description, color = white]\arrow[d, "\com{\nat\alp1}" description, color = white] \arrow[rr, "r", bend left=15, shift left = 1] \arrow[r, "\alp1"] & v\rhd (\1_\ST\otimes\1_\ST) \arrow[d, shift right = 9, pos = 0.6, shorten <= 1cm, "\com{\eqref{a2lm}}" description, color = white]\arrow[r, "{\id\,\rhd\,r}"]& v\rhd\1_\ST\arrow[d, "\com{\funct\rhd}" description, color = white]\\
v\rhd(\1_\ST\otimes(\1_\SV\rhd\1_\ST)) \arrow[r, "{\id\,\rhd\,\alp2}"'] \arrow[ru, "{\id\,\rhd\,(\id\,\otimes\,l^\rhd)}"] & v\rhd(\1_\SV\rhd(\1_\ST\otimes\1_\ST)) \arrow[r, "m"'] \arrow[u, "{\id\,\rhd\,l^\rhd}"']\arrow[r, shift left = 6, pos = -0.4,  "\com{\eqref{triangleM}}" description, color = white] & (v\otimes\1_\SV)\rhd(\1_\ST\otimes\1_\ST) \arrow[d, "{\id\,\rhd\,r}"'] \arrow[lu, "{r\,\rhd\,\id}"', shift right = 1] \\
(v\rhd\1_\ST)\otimes(\1_\SV\rhd \1_\ST) \arrow[u, "\alp1"']\arrow[uu, "{\id\,\otimes\,l^\rhd}"', bend left, shift left=15, pos = 0.2] && (v\otimes\1_\SV)\rhd \1_\ST \arrow[uu, "{r\,\rhd\,\id}", pos = 0.2, bend right, shift right=16]
\end{tikzcd}
        \end{equation*}
        which commutes, for all $v\in\SV$, due to the reasons indicated in boxes and
    \end{itemize}
    \item the functor $F^Z$ is braided, since the square \eqref{braidedMonoidalFunctor} is satisfied, which, after resolving the definitions from the lemma, becomes
    \begin{equation*}
    \begin{tikzcd}
(v\rhd\1_\ST)\otimes (w\rhd\1_\ST) \arrow[d, shift left = 6, "\com{\dfn\beta}" description, color = white]\arrow[d, "\alp1"']  \arrow[rr, bend left = 15, "{\beta^{Z(\ST)}}", "=\,\beta"'] & v\rhd((w\rhd\1_\ST)\otimes\1_\ST) \arrow[d, shift right = 11, pos = 0.8, "\com{\eqref{defDrinfeldUnitMorphism}}" description, color = white]\arrow[d, "\com{\eqref{a1r}}" description, pos = 2.3]\arrow[d, "{\id\,\rhd\,r}"] \arrow[ldd, "{\id\,\rhd\,\alp1}", bend left=80, start anchor = south east, end anchor = south east] & (w\rhd\1_\ST)\otimes(v\rhd\1_\ST) \arrow[d, "\alp1"]  \arrow[l, "\alp2"] \\
v\rhd (\1_\ST\otimes (w\rhd\1_\ST)) \arrow[d, shift left = 8, pos = 0.3, "\com{\eqref{a2l}}" description, color = white]\arrow[d, "{\id\,\rhd\,\alp2}"'] \arrow[ru, "{\id\,\rhd\,\beta}"] \arrow[r, "{\id\,\rhd\,l}"'] & v\rhd(w\rhd\1_\ST) & w\rhd(\1_\ST\otimes(v\rhd\1_\ST))\arrow[d, shift right = 10, shorten <= 2cm, "\com{\eqref{brac}}" description, color = white] \arrow[d, "{\id\,\rhd\,\alp2}"]\\
v\rhd(w\rhd(\1_\ST\otimes\1_\ST))  \arrow[ru, "{\begin{array}{l}\id\,\rhd\,(\id\,\rhd\,l)\\=\,\id\,\rhd\,(\id\,\rhd\,r)\end{array}}"']&  & w\rhd(v\rhd(\1_\ST\otimes\1_\ST))  \\
(v\otimes w)\rhd(\1_\ST\otimes\1_\ST) \arrow[rrd, "\com{\funct\rhd}" description, color = white]\arrow[rr, "{\beta^\SV\,\rhd\,\id}"'] \arrow[d, "{\id\,\rhd\,r}"'] \arrow[u, "m"]&& (w\otimes v)\rhd(\1_\ST\otimes\1_\ST) \arrow[d, "{\id\,\rhd\,r}"] \arrow[u, "m"']\\
(v\otimes w)\rhd \1_\ST \arrow[rr, "{\beta^\SV\,\rhd\,\id}"']&& (w\otimes v)\rhd \1_\ST
\end{tikzcd}
    \end{equation*}
    which commutes, for all $v,w\in\SV$, due to the reasons indicated in boxes.
\end{itemize}

\section{Glossary of diagrams}\label{secAppendixGlossary}
\begin{itemize}[noitemsep,topsep=0pt,parsep=0pt,partopsep=0pt, align = parleft, left =0pt..2.5cm]
\item[\eqref{aL}] triangle automatically satisfied by monoidal categories\dotfill\pageref{aL}
\item[\eqref{aR}] triangle automatically satisfied by monoidal categories\dotfill\pageref{aR}
\item[\eqref{a1a}] pentagon for a module monoidal category\dotfill\pageref{a1a} 
\item[\eqref{a1a2}] hexagon for a module monoidal category\dotfill\pageref{a1a2}
\item[\eqref{a1lm}] triangle automatically satisfied by module monoidal categories \dotfill\pageref{a1lm}
\item[\eqref{a1m}] pentagon for a module monoidal category\dotfill\pageref{a1m}
\item[\eqref{a1r}] triangle automatically satisfied by unital module monoidal categories \dotfill\pageref{a1r}
\item[\eqref{a2a}] pentagon for a module monoidal category \dotfill\pageref{a2a}
\item[\eqref{a2l}] triangle automatically satisfied by unital module monoidal categories \dotfill\pageref{a2l}
\item[\eqref{a2lm}] triangle automatically satisfied by module monoidal category \dotfill\pageref{a2lm}
\item[\eqref{a2m}] pentagon for a module monoidal category \dotfill\pageref{a2m}
\item[\eqref{brac}] septagon for a module monoidal category\dotfill\pageref{brac}
\item[\eqref{braidedMonoidalFunctor}] square for a braided monoidal functor\dotfill\pageref{braidedMonoidalFunctor}
\item[\eqref{actionCoherenceAndBraiding}] hexagon for a pair\dotfill\pageref{actionCoherenceAndBraiding}
\item[\eqref{phi0l}] square for a monoidal functor\dotfill\pageref{phi0l}
\item[\eqref{phi0r}] square for a monoidal functor\dotfill\pageref{phi0r} 
\item[\eqref{braidingHexagon1}] hexagon for a braided monoidal category\dotfill\pageref{braidingHexagon1}
\item[\eqref{braidingHexagon2}] hexagon for a braided monoidal category\dotfill\pageref{braidingHexagon2}
\item[\eqref{diagModuleMonoidalFunctorCompAlpha1}] hexagon for a module monoidal functor\dotfill\pageref{diagModuleMonoidalFunctorCompAlpha1}
\item[\eqref{diagModuleMonoidalFunctorCompAlpha2}] hexagon for a module monoidal functor\dotfill\pageref{diagModuleMonoidalFunctorCompAlpha2}
\item[\eqref{composGamma}] composition formula for a morphism of pairs\dotfill\pageref{composGamma}
\item[\eqref{composPhi0}] composition formula for a monoidal functor\dotfill\pageref{composPhi0}
\item[\eqref{composPhi2}] composition formula for a monoidal functor\dotfill\pageref{composPhi2}
\item[\eqref{composS}] composition formula for a module functor\dotfill\pageref{composS}
\item[\eqref{defDrinfeldHalfBraiding}] hexagon for the half\--braiding\dotfill\pageref{defDrinfeldHalfBraiding}
\item[\eqref{defDrinfeldTensor}] hexagon for the half\--braiding on tensor product\dotfill\pageref{defDrinfeldTensor}
\item[\eqref{defDrinfeldUnitMorphism}] triangle for the half\--braiding on monoidal unit\dotfill\pageref{defDrinfeldUnitMorphism}
\item[\eqref{phi2Hexagon}] hexagon for a monoidal functor\dotfill\pageref{phi2Hexagon}
\item[\eqref{condition2MorphismOfPairs}] triangle for a 2\==morphism of pairs\dotfill\pageref{condition2MorphismOfPairs}
\item[\eqref{monNatTrafoPhi0}] triangle for a monoidal natural transformations\dotfill\pageref{monNatTrafoPhi0}
\item[\eqref{monNatTrafoPhi2}] square for a monoidal natural transformations\dotfill\pageref{monNatTrafoPhi2}
\item[\eqref{monNatTrafoS}] square for a module natural transformations\dotfill\pageref{monNatTrafoS}
\item[\eqref{pentA}] pentagon for a monoidal category\dotfill\pageref{pentA}
\item[\eqref{pentM}] pentagon for a module category\dotfill\pageref{pentM}
\item[\eqref{sTriangle}] triangle for a module functor\dotfill\pageref{sTriangle}
\item[\eqref{sPentagon}] pentagon for a module functor\dotfill\pageref{sPentagon}
\item[\eqref{triangleA}] triangle for a monoidal category\dotfill\pageref{triangleA}
\item[\eqref{triangleM}] triangle for a module category\dotfill\pageref{triangleM}
\item[\eqref{diagTri'}] triangle automatically satisfied by module categories\dotfill\pageref{diagTri'}
\item[\eqref{adjunctionZigzag}] zig\--zag\--identities for adjunctions\dotfill\pageref{adjunctionZigzag}
\item[\eqref{braidingV1}] triangle for braiding $\beta^\SV_{v,\1_\SV}$\dotfill\pageref{braidingV1}
\item[\eqref{braiding1V}] triangle for braiding $\beta^\SV_{\1_\SV,v}$\dotfill\pageref{braiding1V}
\item[\eqref{defDrinfeldMorphism}] square for a morphism in the Drinfeld center\dotfill\pageref{defDrinfeldMorphism}
\item[\eqref{defDrinfeldBraiding}] defining equation for braiding on Drinfeld center\dotfill\pageref{defDrinfeldBraiding}
\item[\eqref{pf1}] auxiliary diagram for the proof of Lemma \ref{lemUnitalModuleMonoidalCategoryToPair}\dotfill\pageref{pf1}
\item[\eqref{pf2}] auxiliary diagram for the proof of Lemma \ref{lemUnitalModuleMonoidalCategoryToPair}\dotfill\pageref{pf2}
\item[\eqref{pf3}] auxiliary diagram for the proof of Lemma \ref{lemUnitalModuleMonoidalCategoryToPair}\dotfill\pageref{pf3}
\item[\eqref{pf4}] auxiliary diagram for the proof of Lemma \ref{lemUnitalModuleMonoidalCategoryToPair}\dotfill\pageref{pf4} 
\end{itemize}

\newcommand{\arxiv}[2]{\href{http://arXiv.org/abs/#1}{#2}}
\newcommand{\doi}[2]{\href{http://doi.org/#1}{#2}}


\begin{thebibliography}{XXXXX}

\bibitem[Bez]{Bezrukavnikov11}
R.~Bezrukavnikov,
\textit{On tensor categories attached to cells in affine Weyl groups},
\doi{10.2969/aspm/04010069}{Advanced Studies in Pure Mathematics \textbf{40} (2004) 69--90},
\arxiv{math/0010089}{arXiv:math/0010089 [math.RT]}.

\bibitem[BJS]{BJS21}
A.~Brochier, D.~Jordan, N.~Snyder,
\textit{On dualizability of braided tensor categories},
\doi{10.1112/s0010437x20007630}{Compositio Mathematica \textbf{157}:3 (2021) 435--483},
\arxiv{1804.07538}{arXiv:1804.07538 [math.QA]}.

\bibitem[Bor]{Borceux}
F.~Borceux,
\textit{Handbook of categorical algebra},
\doi{10.1017/CBO9780511525858}{Encyclopedia of Mathematics and its Applications \textbf{50} (1994)}.

\bibitem[CRS]{CRS20}
N.~Carqueville, I.~Runkel, G.~Schaumann,
\textit{Orbifolds of Reshetikhin--Turaev TQFTs},
Theory and Applications of Categories \textbf{35}:15 (2020) 513--561,
\arxiv{1809.01483}{arXiv:1809.01483 [math.QA]}.

\bibitem[CW]{CW22}
A.~Ros~Camacho, T.~Wasserman,
\textit{Landau-Ginzburg/Conformal Field Theory Correspondence for $x^d$ and Module Tensor Categories},
\arxiv{2206.01045}{arXiv:2206.01045 [math.QA]}.

\bibitem[Del]{Dell21}
Z.~Dell,
\textit{A Characterization of Braided Enriched Monoidal Categories},
\arxiv{2104.07747}{arXiv:2104.07747 [math.CT]}.

\bibitem[DHP]{DHP22}
Z.~Dell, P.~Huston, D.~Penneys,
\textit{Unitary braided--enriched monoidal categories},
\arxiv{2208.14992}{arXiv:2208.14992 [math.CT]}.

\bibitem[DMNO]{DMNO11}
A.~Davydov, M.~M\"uger, D.~Nikshych, V.~Ostrik,
\textit{The Witt group of non-degenerate braided fusion categories},
\doi{10.1515/crelle.2012.014}{Journal für die reine und angewandte Mathematik (Crelles Journal) \textbf{677} (2013) 135--177},
\arxiv{1009.2117}{arXiv:1009.2117 [math.QA]}.

\bibitem[DR]{DR13}
A.~Davydov, I.~Runkel,
\textit{An alternative description of braided monoidal categories},
\doi{10.1007/s10485-013-9338-3}{Applied Categorical Structures \textbf{23}:3 (2015) 279--309},
\arxiv{1307.5969}{arXiv:1307.5969 [math.CT]}.

\bibitem[EGNO]{EGNO}
P.~Etingof, S.~Gelaki, D.~Nikshych, V.~Ostrik,
\textit{Tensor categories},
\doi{10.1090/surv/205}{Mathematical Surveys and Monographs \textbf{205} (2015)}.

\bibitem[FSV]{FSV13}
J.~Fuchs, C.~Schweigert, A.~Valentino,
\textit{Bicategories for Boundary Conditions and for Surface Defects in 3--d {TFT}},
\doi{10.1007/s00220-013-1723-0}{Communications in Mathematical Physics \textbf{321}:2 (2013) 543--575},
\arxiv{1203.4568}{arXiv:1203.4568 [hep.th]}.

\bibitem[HPT1]{HPT15}
A.~Henriques, D.~Penneys, J.~Tener,
\textit{Categorified trace for module tensor categories over braided tensor categories},
\doi{10.4171/DM/553}{Documenta Mathematica \textbf{21} (2016) 1089--1149},
\arxiv{1509.02937}{arXiv:1509.02937 [math.QA]}.

\bibitem[HPT2]{HPT16}
A.~Henriques, D.~Penneys, J.~Tener,
\textit{Planar algebras in braided tensor categories},
\doi{10.1090/memo/1392}{Memoirs of the American Mathematical Society \textbf{282}:1392 (2023)},
\arxiv{1607.06041}{arXiv:1607.06041 [math.QA]}.

\bibitem[HPT3]{HPT23}
A.~Henriques, D.~Penneys, J.~Tener,
\textit{Unitary anchored planar algebras},
\arxiv{2301.11114}{arXiv:2301.11114 [math.QA]}.

\bibitem[HR]{HR}
S.~Heinrich, I.~Runkel, in preparation.

\bibitem[JMPP]{JMPP19}
C.~Jones, S.~Morrison, D.~Penneys, J.~Plavnik,
\textit{Extension theory for braided--enriched fusion categories},
\doi{10.1093/imrn/rnab133}{International Mathematics Research Notices \textbf{2022}:20 (2022) 15632--15683},
\arxiv{1910.03178}{arXiv:1910.03178 [math.QA]}.

\bibitem[JS]{JoyalStreet}
A.~Joyal, R.~Street,
\textit{Braided monoidal categories}, Mathematics Reports \textbf{86008} (1986).

\bibitem[JY]{JY}
N.~Johnson, D.~Yau,
\textit{2--Dimensional Categories},
\doi{10.1093/oso/9780198871378.001.0001}{Oxford University Press (2021)},
\arxiv{2002.06055}{arXiv:2002.06055 [math.CT]}.

\bibitem[KYZZ]{KYZZ21}
L.~Kong, W.~Yuan, Z.~Zhang, H.~Zheng,
\textit{Enriched monoidal categories I: centers},
\arxiv{2104.03121}{arXiv:2104.03121 [math.CT]}.

\bibitem[Lau]{Laugwitz19}
R.~Laugwitz,
\textit{The relative monoidal center and tensor products of monoidal categories},
\doi{10.1142/s0219199719500688}{Communications in Contemporary Mathematics \textbf{22}:8 (2019)},
\arxiv{1803.04403}{arXiv:1803.04403 [math.QA]}.

\bibitem[LW]{LW}
R.~Laugwitz, C.~Walton,
\textit{Constructing non--semisimple modular categories with relative monoidal centers},
\doi{10.1093/imrn/rman097}{International Mathematics Research Notices \textbf{2022}:20 (2022) 15826--15868},
\arxiv{2010.11872}{arXiv:2010.11872 [math.QA]}.

\bibitem[Mac]{MacLane}
S.~Mac~Lane,
\textit{Categories for the working mathematician},
\doi{10.1007/978-1-4757-4721-8}{Graduate Texts in Mathematics \textbf{5} (1971)}.

\bibitem[MP]{MP17}
S.~Morrison, D.~Penneys,
\textit{Monoidal categories enriched in braided monoidal categories},
\doi{10.1093/imrn/rnx217}{International Mathematics Research Notices \textbf{2019}:11 (2019) 3527--3579},
\arxiv{1701.00567}{arXiv:1701.00567 [math.CT]}.

\bibitem[MPP]{MPP18}
S.~Morrison, D.~Penneys, J.~Plavnik,
\textit{Completion for braided enriched monoidal categories},
\arxiv{1809.09782}{arXiv:1809.09782 [math.CT]}.

\bibitem[Mul]{Mulevicius22}
V.~Mulevi{\v{c}}ius,
\textit{Condensation inversion and Witt equivalence via generalised orbifolds},
\arxiv{2206.02611}{arXiv:2206.02611 [math.QA]}.

\bibitem[Riv]{Rivano}
N.~Rivano,
\textit{Cat{\'e}gories tannakiennes},
\doi{10.1007/BFb0059108}{Lecture Notes in Mathematics \textbf{265} (1972) }.

\end{thebibliography}
\end{document}